\newcommand{\AAAcero}{\left[
\begin{array}{ccc}
a_{11}-a_{31}&a_{12}-a_{32}&a_{13}-a_{33}\\a_{21}-a_{31}&a_{22}-a_{32}&a_{23}-a_{33}\\0&0&0\\
\end{array}\right]}
\newcommand{\LLL}{\left[
\begin{array}{ccc}
0&-d-d_2&-2d-d_3\\
-2d-d_1&0&-d-d_3\\
-d-d_1&-2d-d_2&0\\
\end{array}\right]}
\newcommand{\LLLcero}{\left[
\begin{array}{ccc}
d+d_1&d&-2d-d_3\\
-d&2d+d_2&-d-d_3\\
0&0&0\\
\end{array}\right]}
\newcommand{\LLLTmenoscero}{\left[
\begin{array}{ccc}
-2d-d_3&d+d_1-d_3&d+d_1\\
-d+d_2-d_3&-d-d_3&2d+d_2\\
0&0&0
\end{array}\right]}
\newcommand{\UUU}{\left[
\begin{array}{ccc}
0&-2d-d_2&-d-d_3\\
-d-d_1&0&-2d-d_3\\
-2d-d_1&-d-d_2&0\\
\end{array}\right]}
\newcommand{\FH}{\left[
\begin{array}{ccc}
0&-d-d_2&-2d-d_3-h_3\\
-2d-d_1-h_1&0&-d-d_3-g\\
-d-d_1&-2d-d_2-h_2&0\\
\end{array}\right]}
\newcommand{\GH}{\left[
\begin{array}{ccc}
0&-d-d_2-g_2&-2d-d_3-h_3\\
-2d-d_1-h_1&0&-d-d_3-g_3\\
-d-d_1-g_1&-2d-d_2-h_2&0\\
\end{array}\right]}
\newcommand{\FHcero}{\left[
\begin{array}{ccc}
d+d_1&d+h_2&-2d-d_3-h_3\\
-d-h_1&2d+d_2+h_2&-d-d_3-g\\
0&0&0\\
\end{array}\right]}
\newcommand{\FHTmenoscero}{\left[
\begin{array}{ccc}
-2d-d_3-h_3&d+d_1-d_3+h_1-g&d+d_1\\
-d+d_2-d_3-h_3&-d-d_3-g&2d+d_2+h_2\\
0&0&0\\
\end{array}\right]}
\newcommand{\suces}[3]{{#1_{#2},\hdots,#1_{#3}}}
\newcommand{\su}[1]{#1_1,#1_2,#1_3}
\newcommand{\co}[3]{\left[\begin{array}{c}
#1\\#2\\#3
\end{array}\right]}
\newcommand{\est}{\operatorname{st}}
\newcommand{\col}{\operatorname{col}}
\renewcommand{\span}{\operatorname{span}}
\newcommand{\cospan}{\operatorname{co--span}}
\newcommand{\row}{\operatorname{row}}
\newcommand{\const}{\operatorname{const}}
\newcommand{\ant}{\operatorname{ant}}
\newcommand{\soma}{\operatorname{soma}}
\newcommand{\diag}{\operatorname{diag}}
\newcommand{\realamp}{\R\cup\{-\infty\}}
\newcommand{\realampp}{\R\cup\{+\infty\}}
\newcommand{\m}{\medskip}
\newcommand{\N}{\mathbb{N}}
\newcommand{\R}{\mathbb{R}}
\renewcommand{\P}{\mathbb{P}}
\newcommand{\T}{\mathbb{T}}
\newcommand{\CC}{\mathcal{C}}
\newcommand{\TT}{\mathcal{T}}
\newcommand{\oa}{\overline{a}}
\newcommand{\ob}{\overline{b}}
\newcommand{\oc}{\overline{c}}
\newcommand{\od}{\overline{d}}
\renewcommand{\oe}{\overline{e}}
\newcommand{\of}{\overline{f}}
\newcommand{\id}{\operatorname{id}}
\newcommand{\im}{\operatorname{im}}
\newtheorem{thm}{Theorem}
\newtheorem{lem}{Lemma}
\newtheorem{dfn}{Definition}
\newtheorem{rem}{Remark}
\newtheorem{ex}{Example}
\newtheorem{cor}{Corollary}
\begin{document}

\title{Tropical linear maps on the plane}

\author{M.J. de la
Puente\thanks{Departamento de Algebra, Facultad de Matem{\'a}ticas,
Universidad Complutense, 28040--Madrid, Spain,
mpuente@mat.ucm.es}\ \thanks{Partially
supported by UCM research group 910444.}}

\maketitle

\date{}
\begin{abstract}

AMS class.: 15A04; 15A21; 15A33; 12K99.

Keywords and phrases: linear map, tropical geometry, projective plane.

In this paper we  fully describe  all \emph{tropical linear maps in the tropical projective plane $\T\P^2$}, that is, maps from the tropical plane to itself given by tropical multiplication by a  real $3\times 3$ matrix $A$.
The map $f_A$ is  continuous and piecewise--linear  in the classical sense. In some particular cases, the map $f_A$ is a parallel projection onto the set spanned by the columns of $A$. In the general case, after a change of coordinates, the map collapses at most three regions of the plane onto certain  segments, called antennas, and is  a parallel projection elsewhere (theorem \ref{thm:f_A}).

\m In order to study  $f_A$, we may assume that $A$ is normal, i.e., $I\le A\le 0$, up to  changes of coordinates. A given matrix $A$ admits infinitely many normalizations. Our approach is to define and compute a unique normalization   for $A$  (which we call \emph{lower canonical normalization}) (theorem \ref{thm:canonical_nf}) and then always work with it, due both to its algebraic simplicity and its geometrical meaning.

\m On  $\R^n$, any $n\in\N$,  some aspects of tropical linear maps have been studied in \cite{Butkovic_S}. We work in $\T\P^2$, adding a geometric view and doing everything explicitly. We give precise pictures.

\m Inspiration for this paper comes from \cite{Ansola_tri,Barbero,Butkovic_S,Cohen,Develin,Yoeli}. We have tried to make it self--contained. Our preparatory   results  present   noticeable relationships between the algebraic properties of a given matrix  $A$ (idempotent normal matrix, permutation matrix, etc.) and  classical geometric properties  of the points spanned by the columns of $A$   (classical convexity and others); see theorem \ref{thm:normalZ=0} and corollary \ref{cor:convex}. As a by--product, we compute all the tropical square roots of normal  matrices of a certain type; see corollary \ref{cor:sqrt}. This is, perhaps, a curious result  in  tropical algebra.
Our final aim is, however, to give a precise  description of the  map $f_A:\T\P^2\to\T\P^2$. This  is particularly easy  when two tropical triangles arising from $A$ (denoted $\TT_A$ and $\TT^A$)  fit as much as possible. Then the action of $f_A$ is easily described on (the closure of) each cell of
 the cell decomposition  $\CC^A$; see  theorem \ref{thm:f_A}.

\m \emph{Normal matrices} play a crucial role in this paper.
The tropical powers of normal matrices of  size $n\in \N$  satisfy  $A^{\odot n-1}=A^{\odot {n}}=A^{\odot n+1}=\cdots$. This statement can be traced back, at least,  to \cite{Yoeli}, and appears later  many times,  such as \cite{Akian_HB, Akian_AMS, Butkovic_S, Cuninghame,Cuninghame_New}. In lemma \ref{lem:normal}, we  give a direct proof  of this fact, for $n=3$. But now the equality $A^{\odot2}=A^{\odot3}$ means that the columns of $A^{\odot 2}$ are three fixed points of $f_A$ and, in fact, \emph{any point spanned by the columns of $A^{\odot2}$ is  fixed by $f_A$}.
Among $3\times 3$ normal matrices, the \emph{idempotent} ones (i.e., those satisfying $A=A^{\odot 2}$) are particularly  nice:
we prove that the columns of such a matrix  tropically span a  set which is classically compact, connected and \emph{convex} (lemma \ref{lem:A=A2} and corollary \ref{cor:convex}).
In our terminology, it is a \emph{good tropical triangle}.

\end{abstract}

\section{Introduction, Notations and Background on Tropical Mathematics}

Many results on  finite dimensional tropical linear algebra (spectral theory, etc.) have been published over the last 40 years and more; they are summarized in \cite{Akian_HB, Cuninghame_New, Gaubert}, where a wide bibliography can also be found. Two recent up--to-date collections of papers are \cite{Litvinov_ed, Litvinov_ed_2}
In this  paper we will use the adjective \emph{classical} as opposed to \emph{tropical}. Most definitions in tropical mathematics just  mimic the classical ones.
However, tropical geometry is a peculiar one.   Say an inhabitant of the tropical plane is disoriented. He/she takes a look at a compass   and tries to spot the tropical cardinal points. There are only three: \emph{east, north and south--west}! Accordingly, he/she will set the positive part of the three coordinate axes in the given directions,  when doing geometry on the plane. He/she will find out that a  generic tropical line in the tropical plane looks like a tripod (it has a vertex!) although some particular tropical lines look just like classical lines, see figure \ref{fig:recta_1}.
\begin{figure}[H]
 \centering
  \includegraphics[width=8cm,keepaspectratio]{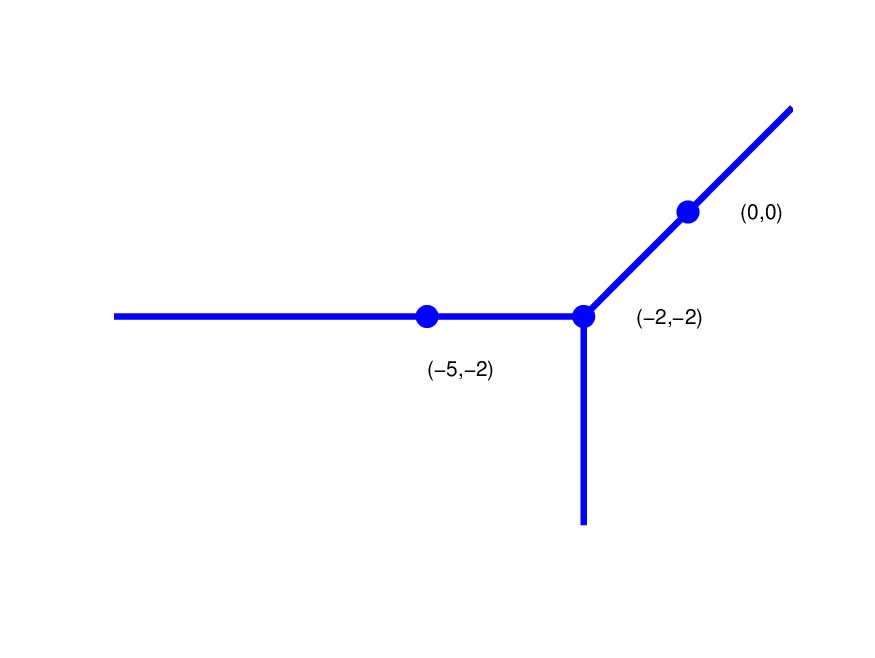}\\
  \caption{Tropical line with vertex at the point $(-2,-2)$.}
  \label{fig:recta_1}
  \end{figure}

If we happen to go down--town in  a city designed by a  tropical geometer, we will
  find out that the shape of most  blocs   is that of a classical hexagon, with parallel opposite sides of slopes $0,1,\infty$, see figure \ref{fig:downtown_3}.
\begin{figure}[H]
 \centering
  \includegraphics[keepaspectratio,width=6cm]{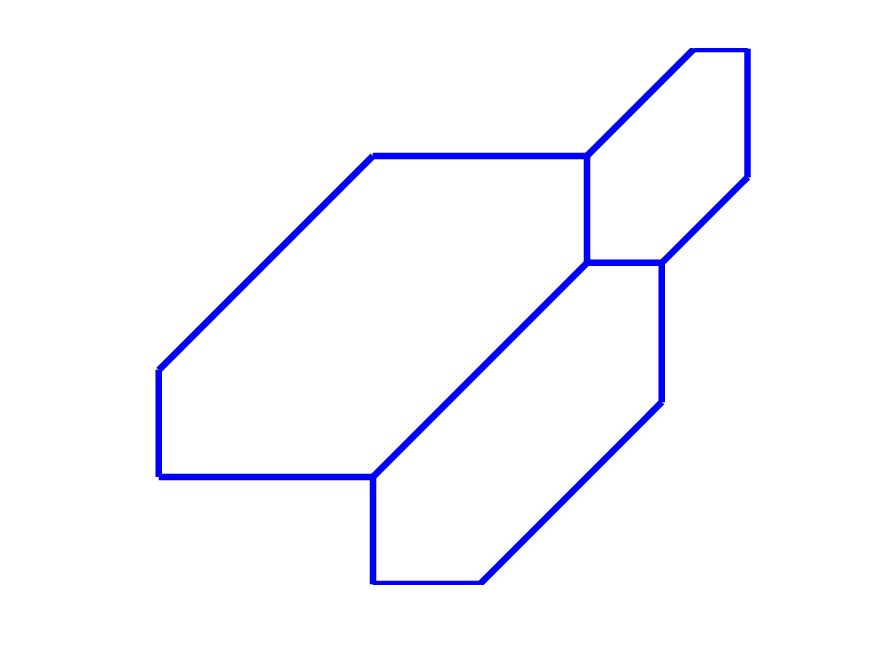}\\
  \caption{Downtown blocs in a tropical city.}
  \label{fig:downtown_3}
  \end{figure}

The shortest path between two given points  is made up of, at most,  two classical segments with slopes $0,1,\infty$. Moreover, the distance between the given  points is the sum of the \emph{integer lengths} (also called \emph{lattice lengths})
of these segments. For instance, the integer length between the points $(-2,-2)$  and $(0,0)$ is $2$ (not $2\sqrt{2}$!)  and the integer length between the points $(-5,-2)$  and $(0,0)$ is $3+2=5$; see figure \ref{fig:recta_1}.
This is, indeed, a sort of Manhattan distance.

So, plane tropical geometry is a funny looking
piecewise--linear geometry. And, by the way, why is it called tropical? Well, the explanation  appears in \cite{Gathmann, Itenberg}, etc. and we must add that some other names have also been used (for this or akin mathematics):  max--plus, dioids, path algebra, extremal algebra, idempotent mathematics, etc.

\m Consider the set $\realamp$ endowed with
tropical addition $\oplus$ and tropical multiplication $\odot$, where
these operations are defined as follows:
    $$a\oplus b=\max\{a,b\},\qquad a\odot b=a+b,$$ for
    $a,b\in\realamp$. Here, $-\infty$ is the neutral element for tropical addition and   $0$ is the neutral element for tropical multiplication.
    Notice that
    $a\oplus a=a$, for all $a$, i.e., tropical addition is \emph{idempotent}. Notice also that $a$ has no inverse with respect to $\oplus$.

\m We will work with $\realamp$, which will be denoted $\T$ and will be called the \emph{tropical semi--field}. We will write $\oplus$ or $\max$, (resp. $\odot$ or $+$) at our convenience.

\m In classical mathematics, we have  a choice in  geometry: affine or projective.  The  \emph{tropical affine plane} is  $\T^2$,
where addition and multiplication are defined
coordinatewise.  In the space
$\T^{3}\setminus\{(-\infty,-\infty,-\infty)\}$ we define
an equivalence relation $\sim $ \label{asser:equiv} by letting $(\su p)\sim (\su q)$ if there exists
$\lambda\in\R$ such that
    $$\lambda\odot(\su p)=(\lambda+ p_1,  \lambda+ p_2, \lambda+ p_{3})=
    (\su q).$$
The equivalence class of $(\su p)$ is denoted
$[\su p]$.  The \emph{tropical projective plane} is the set,
$\T\P^2$, of such equivalence classes. Notice
that, at least, one of the coordinates of any point in
$\T\P^2$ must be finite; see figure \ref{fig:proj_plane_3}.

\begin{figure}[H]
 \centering
  \includegraphics[width=7cm,keepaspectratio]{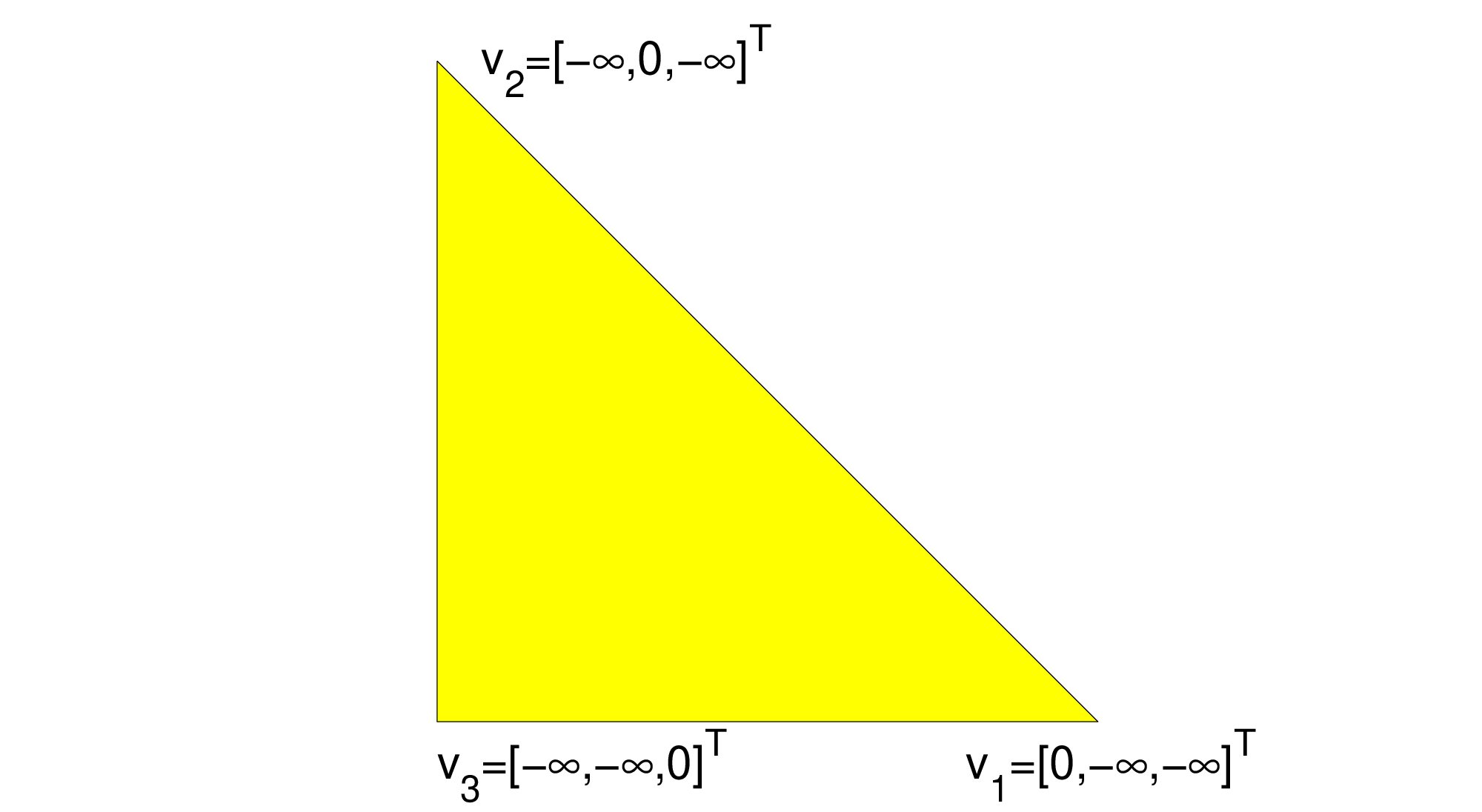}\\
  \caption{Tropical projective plane.}
  \label{fig:proj_plane_3}
  \end{figure}

We endow the tropical plane (either  affine or projective)  with the topology induced by the \emph{Euclidean topology}; the  closure $\overline S$ of a set $S$  refers to this topology. In p. \pageref{dfn:norm} below, we also define a  \emph{tropical norm} in the  projective tropical plane. This  norm gives rise to the Euclidean topology.

 It can be easily proved that $\T\P^2$ is compact. \label{asser:compact} $\T\P^2$ is a \emph{compactification} of  $\T^2$ (and also of $\R^2$; see p. \pageref{asser:compact2}). Indeed, the image of the  injective map $\varphi:\T^2\to\T\P^2$ given by $(x,y)\mapsto [x,y,0]$ is open and dense.  The  \emph{boundary points} are those $[x,y,-\infty]$ in $\T\P^2$.
  In fact, $\T\P^2$ is homeomorphic to a classical  triangle in $\R^2$ (the vertices of $\T\P^2$ are $v_1=[0,-\infty,-\infty]^T, v_2=[-\infty,0,-\infty]^T$ and $v_3=[-\infty,-\infty,0]^T$; see figure \ref{fig:proj_plane_3}).

 Now, for any $p=[x,y,z]$, we have $\varphi^{-1}(p)=(x-z,y-z)$,  whenever $z\neq -\infty$. Taking $(x-z,y-z,0)$ as a representative of $p$ will be expressed by saying that   \emph{we work in $Z=0$}. In other words, to work in $Z=0$ it is just a way of  passing from the projective to the  affine tropical plane.

\m The simplest objects in the tropical plane are lines. Given
 a tropical linear form
$$p_1\odot X\oplus p_2\odot Y\oplus p_3\odot Z=\max\{p_1+X ,
    p_2+Y,p_3+Z
     \}$$ a \emph{tropical line} consists  of the points  $[x,y,z]\in\T\P^2$ where \emph{the maximum is attained, at least, twice}, (this is the tropical analog of the classical vanishing point set). Denote this line by $L_p$, where $p=[p_1,p_2,p_3]\in\T\P^2$.

 \m Most lines in the  tropical plane look like \emph{tripods}. Indeed, if two coefficients are equal to $-\infty$, then $L_p$ is a boundary component of $\T\P^2$. If  $p_j=-\infty$ for just one $j$ then, in $Z=0$, $L_p$ is nothing but a classical slope--one line.  If all  $p_j$ are real, the $L_p$ is  the union of three rays. The directions of these rays
are west,   south and north--east (just opposite to the cardinal directions of the tropical plane!) and these rays are emanating from the point $-p$, called the \emph{vertex} of $L_p$. The latter is the generic case.

\m Let two points $p,q$ in the tropical plane  be given.  There may exist just one or infinitely many tropical lines passing through $p$ and $q$. In the latter case, just one of these lines can be described as  the limit, as
$\epsilon$ tends to zero, of the tropical lines going
through perturbed points $p^{v_\epsilon}, q^{v_\epsilon}$.
Here, $p^{v_\epsilon}$ denotes a translation of $p$ by a
length--$\epsilon$ vector  $v_\epsilon$,  see
\cite{Gathmann,Richter}. We denote this limit
line by $pq$ and call it the tropical
\emph{stable join}   of $p,q$.

\m Now, given two tropical lines $L_p,L_q$ in the  plane, the
\emph{stable intersection} of $L_p,L_q$, denoted $L_p\cap_{\est}
L_q$, is defined as the limit point, as $\epsilon$
tends to zero, of the intersection of perturbed lines
$L_p^{v_\epsilon}, L_q^{v_\epsilon}$. Here,
$L_p^{v_\epsilon}$ denotes a translation of $L_p$ by a
length--$\epsilon$  vector $v_\epsilon$.

\m There exists a \emph{duality} between lines  and points since
$$q\in L_p\iff p\in L_q,$$  meaning that \emph{the maximum}
$\max\{p_1+q_1,p_2+q_2,p_3+q_3\}$\emph{is attained, at least, twice.}
This duality  transforms
stable join  into stable intersection and conversely,
 i.e.,  $$L_p\cap_{\est}L_q=r\iff pq=L_r,$$ for
 $p,q,r$  in $\T\P^2$.

\m The \emph{tropical version of Cramer's rule}\label{dfn:Cramer} (see \cite{Richter}) goes as follows:
 the stable
intersection of the lines $L_p$ and $L_q$ is the point
$$[\max\{p_2+q_3,q_2+p_3\},
 \max\{p_1+q_3,q_1+p_3\},
    \max\{p_1+q_2,q_1+p_2\}].$$ Since the computation of this point is nothing but
      a \emph{tropical version of the
    cross--product} of the triples $p$ and $q$,  we will
     denote it by $p\otimes q$ (this is not to be mixed up with $p\odot q=p+q$). Notice that $p\otimes q=q\otimes p$.
     In other words,  the \emph{tropical version of Cramer's rule}  in the plane can be written as
     $$L_p\cap_{\est}L_q=p\otimes q\qquad \text{and}\qquad  pq=L_{p\otimes q},$$  by duality.
In particular, \emph{$-(p\otimes q)$ is the vertex  of the  line $pq$}, a crucial fact that we use again and again.

\begin{figure}[H]
 \centering
  \includegraphics[keepaspectratio,width=6cm]{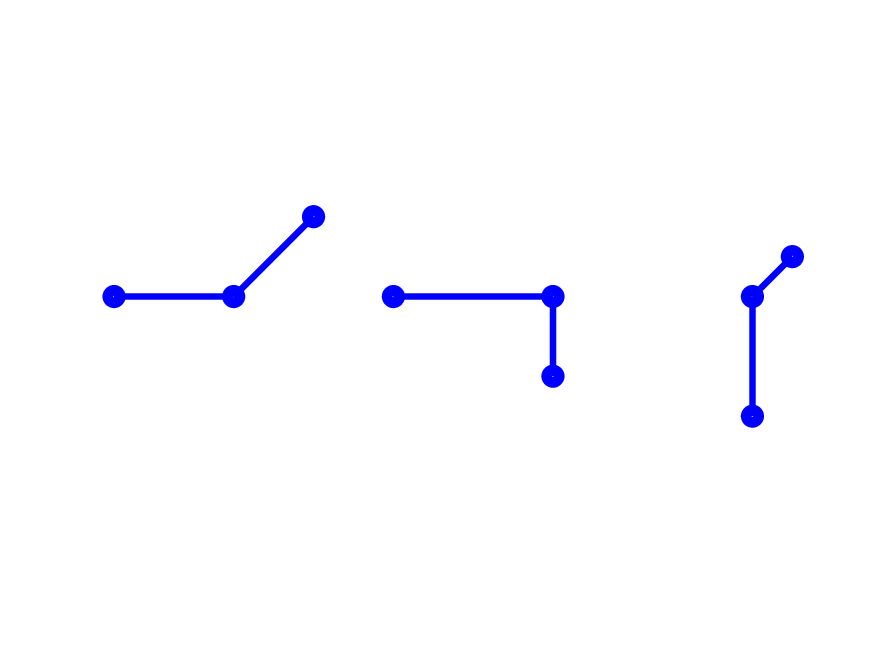}\\
  \caption{Tropical line segments.}
  \label{fig:segmentos_2}
  \end{figure}

Given a  subset $U$ of  points in $\T\P^2$ (resp.  $\T^2$), we can consider the \emph{tropical span of $U$}, denoted $\span(U)$, meaning  the set of points  $u\in\T\P^2$ (resp.  $\T^2$) which can be written as
    $$u=\lambda_1\odot u_1\oplus\cdots\oplus  \lambda_s\odot
     u_s=\max\{\lambda_1+u_1 ,\ldots, \lambda_s+u_s\},$$
     for some $s\in\N$, $\suces u1s\in U$, $\suces {\lambda}1s\in\T$, and not all $\lambda_j$
     equal to $-\infty$ (and $\lambda_1\oplus\cdots\oplus \lambda_s=\max\{\lambda_1,\ldots,\lambda_s\}=0$ when points are in $\T^2$).

\m The \emph{tropical co--span of $U$}, denoted $\cospan(U)$, is the set of points  $u$ which can be written as
    $$u=\min\{\lambda_1+u_1 ,\ldots, \lambda_s+u_s\},$$
     for some $s\in\N$, $\suces u1s\in U$, $\suces {\lambda}1s\in\realampp$, and not all $\lambda_j$
     equal to $+\infty$ (and $\min\{\lambda_1,\ldots,\lambda_s\}=0$ when points are in $\T^2$).

\m
 Given two points $p,q\in\T\P^2$, we know that  $-(p\otimes q)$ represents the vertex of the  line  $pq$. Thus $\span(p,q)$  is the  union of the classical segments $\overline{p,-(p\otimes q)}$ and $\overline{-(p\otimes q),q}$.   Dually, $\cospan(p,q)$ is the union of the classical segments $\overline{p,(-p)\otimes (-q)}$ and $\overline{(-p)\otimes (-q),q}$. It follows that the points $p,-(p\otimes q),q, (-p)\otimes (-q)$ are the vertices of a classical parallelogram, see figure \ref{fig:co_span_4}.
\begin{figure}[H]
 \centering
  \includegraphics[keepaspectratio,width=10cm]{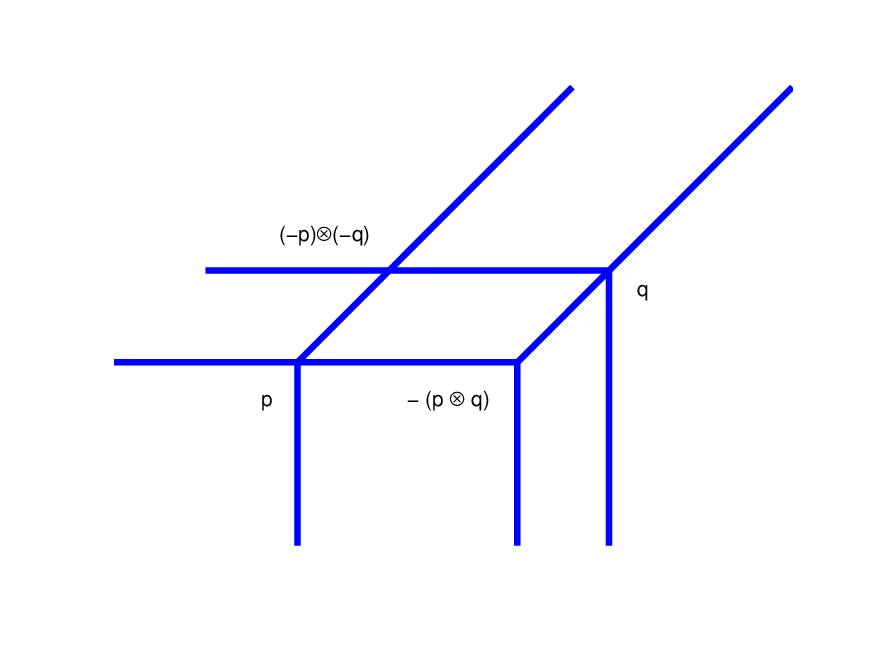}\\
  \caption{Span and co--span of points $p,q$.}
  \label{fig:co_span_4}
  \end{figure}

\m Another sort of duality is taking place here. \label{dfn:max-min_duality} Indeed, we may consider $\realampp$ endowed with tropical addition $\oplus'=\min$ and the same tropical multiplication $\odot$.
The relationship between these two operations is $\max\{p,q\}=-\min\{-p,-q\}$, whence $$p\oplus q=-(-p)\oplus'(-q),$$ for $p,q\in\R$. This \emph{max--min duality} appears  in the literature, see \cite{Baccelli,Cuninghame,Cohen}, etc.

\m  Why do we care about the co--span? A tropical triangle can be determined  by three  points, or by three lines.
First, a tropical triangle  $\TT $ is determined by three points $a,b,c$, meaning
 $$\TT =\span(a,b,c).$$ If the points are tropically collinear then $\TT $ is not generic.

The \emph{sides} of $\TT $  are, by definition, the tropical lines $ab,bc$ and $ca$. The vertices of the sides of $\TT $ (as tropical lines)  are  $-(a\otimes b),-(b\otimes c)$ and $-(c\otimes a)$, again by the \emph{tropical version of Cramer's rule}. The properties of the triangle $\TT $ depend on the configuration of the six  points
\begin{equation}
a,b,c,-(a\otimes b),-(b\otimes c),-(c\otimes a),\label{eqn:six_points}
\end{equation} which are all different, generically.

\m Three tropical lines  $L_p,L_q,L_r$ also determine a \emph{tropical triangle}, $\TT' $, which is generic if the lines are not tropically concurrent. We can write
$$\TT' =\cospan(-p,-q,-r).$$
The stable intersections (by pairs) of the lines  $L_p,L_q,L_r$ are called \emph{vertices} of $\TT' $. These points should not be mixed up with the vertices  $-p,-q,-r$ of the lines. By the \emph{tropical version of Cramer's rule},  the coordinates of the vertices of $\TT' $ are $p\otimes q, q\otimes r$ and $r\otimes p$.  The properties of $\TT' $ depend on the configuration of the six  points
$$p\otimes q, q\otimes r, r\otimes p,-p,-q,-r,$$ which are all different, in the generic case.

\m
A \emph{tropical segment} is the tropical span of two points (see fig. \ref{fig:segmentos_2}).
Tropical triangles are, in general, infinite unions of tropical segments. Indeed,
\begin{equation}
\TT=\span(a,b,c)=\bigcup_{s\in\span(b,c)}\span(a,s).\label{eqn:barrido}
\end{equation}
Therefore, tropical triangles are, in general, connected non--pure two--dimensional  sets. The non--generic case arises when the points $a,b,c$ are tropically collinear (either being $a,b,c$ all different or not).
In addition, it is easy to check that tropical triangles are classically compact, both in $\T\P^2$ and in $\T^2$.

\m It is not true, in general, that the stable intersection of the tropical lines $ab$ and $bc$ gives  back the point $b$, and this makes \emph{tropical triangles trickier than classical triangles}. For example, take $a=[3,4,6]$, $b=[-2,0,8]$ and $c=[1,1,0]$ Then  $a\otimes b=[12,11,3], b\otimes c=[9,9,1]$ and $ab\cap_{\est}bc=[12,13,21]=[-1,0,8]\neq b$. The reader is encouraged to draw this example, in $Z=0$.

\m This anomalous situation for tropical triangles has been studied in \cite{Ansola_tri}, where the  definition of \emph{good tropical triangle} has been given. \label{dfn:good}   
Three  points $a,b,c$   define a \emph{good tropical triangle} if, by stable join, they give rise to three
tropical lines  $ab, bc, ca$ which,
stably intersected by pairs, yield the original points $a,b,c$, i.e., $$ca\cap_{\est}ab= a,\quad ab\cap_{\est}bc= b,\quad bc\cap_{\est}ca= c.$$ 

Good tropical triangles are characterized by six slack inequalities.
Indeed, write  the coordinates of (representatives of) $a,b,c$ as the columns of a matrix $A=(a_{ij})$ so that $c$ occupies the first column and  $a$ occupies the third.  Write
\begin{equation}
\TT_A =\span(A).\label{eqn:span}
\end{equation}
Then theorem 2 in \cite{Ansola_tri} tells us that $\TT_A \subseteq \T\P^2$ is a good tropical triangle if and only if
$$a_{12}-a_{22}\le a_{13}-a_{23}\le a_{11}-a_{21},$$
\begin{equation}
a_{23}-a_{33}\le a_{21}-a_{31}\le a_{22}-a_{32},\label{eqn:6}
\end{equation}
$$a_{31}-a_{11}\le a_{32}-a_{12}\le a_{33}-a_{13}.$$
In order to make drawings in $Z=0$ we consider the matrix
$$A_0=\AAAcero.$$
It is easy to check that the six inequalities (\ref{eqn:6})  imply the  following \emph{cardinal points condition in $Z=0$}: $\col(A_0,1)$  represents the most eastwards point, $\col(A_0,2)$ represents the most northwards one, and $\col(A_0,3)$ represents the most south--westwards one,  among the columns of $A_0$.\label{cond:puntos_card} 
More precisely, starting at $\col(A_0,1)$, we walk $a_{22}-a_{32}-a_{21}+a_{31}\ge0$ units northbound, then walk $a_{11}-a_{31}-a_{12}+a_{32}\ge0$ units westbound and we reach $\col(A_0,2)$. From there, we walk $a_{33}-a_{13}-a_{32}+a_{12}\ge0$  units south--westbound, then walk $a_{22}-a_{12}-a_{23}+a_{13}\ge0$ units southbound, to reach $\col(A_0,3)$. In a similar manner we get from $\col(A_0,3)$ to $\col(A_0,1)$ by walking  first eastbound, then north--eastbound. The distances are dictated by inequalities (\ref{eqn:6}).

  \begin{figure}[H]
 \centering
  \includegraphics[keepaspectratio,width=10cm]{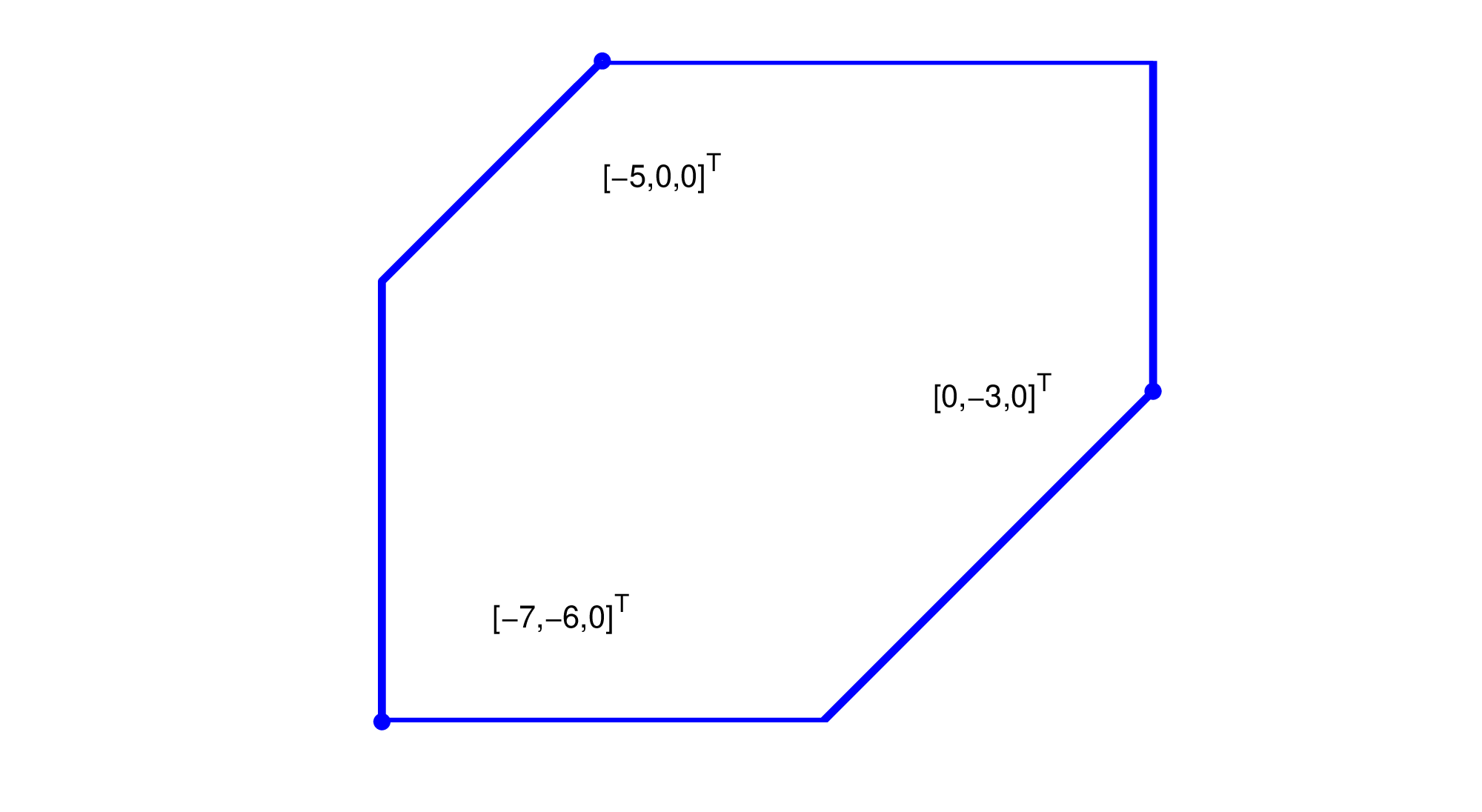}\\
  \caption{Good triangle determined by the matrix $A$.}
  \label{fig:good_5}
  \end{figure}

In figure \ref{fig:good_5} we have the good tropical triangle determined by the matrix
$$A=A_0=\left[\begin{array}{ccc}
0&-5&-7\\-3&0&-6\\0&0&0
\end{array}\right].$$
In simple words, in  $Z=0$,
good tropical triangles are nothing but  classical hexagons, pentagons, quadrangles or triangles having slopes $0,1$ and $\infty$, where the inequalities (\ref{eqn:6}) provide the integer length of the sides.
The hexagons are obtained  by chopping off two  corners, in a classical rectangle of sides parallel to axis $X,Y$, see figure \ref{fig:rectangle_13}. The pentagons, quadrangles or triangles arise from one such hexagon, when one or more sides collapse to a point. In figure \ref{fig:goods_6} we see a few good triangles.
Here is one more way to describe good triangles: any  $a\le b$, $c\le d$ and $e\le f$ in $\R$ define the following good tropical triangle in $Z=0$:
\begin{equation}\label{eqn:good1}
\TT=\{(x,y)\in \R^2: a\le x\le  b,\ c\le y\le d,\ e\le y-x\le  f\}.
\end{equation}

\begin{figure}[H]
 \centering
  \includegraphics[keepaspectratio,width=10cm]{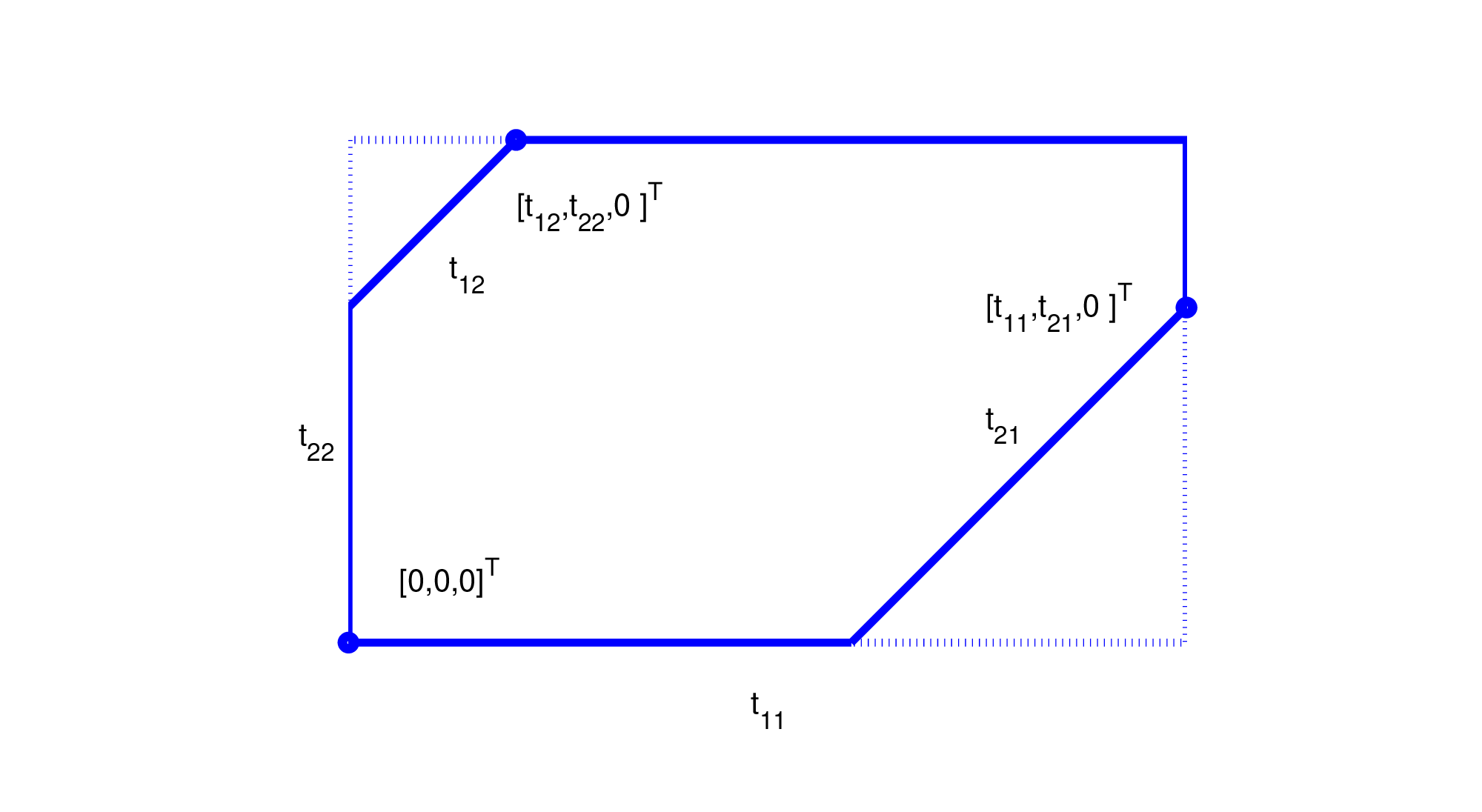}\\
  \caption{A good tropical triangle is a classical rectangle with two corners chopped off.}
  \label{fig:rectangle_13}
  \end{figure}

   \begin{figure}[H]
 \centering
  \includegraphics[keepaspectratio,width=8cm]{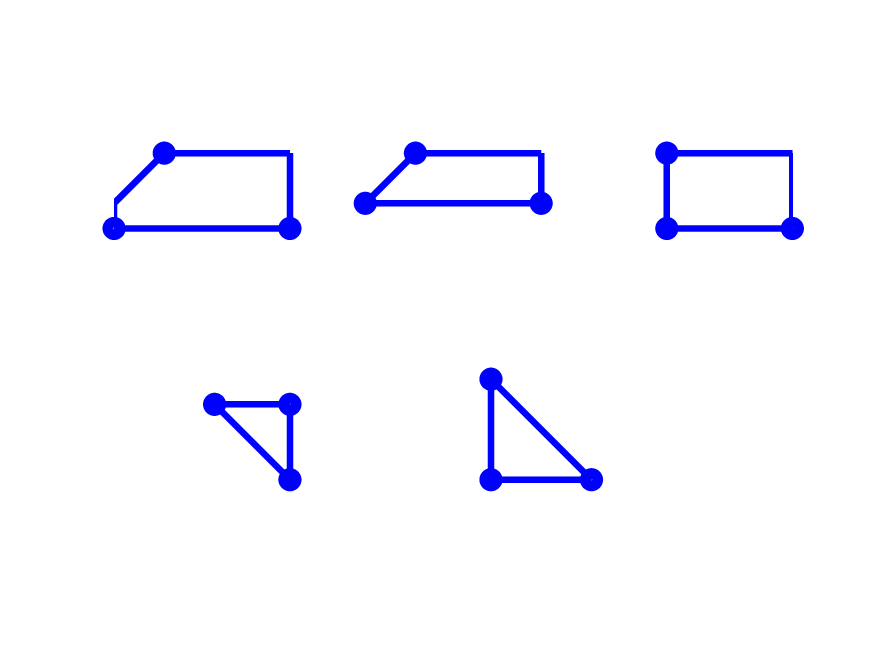}\\
  \caption{Some good tropical triangles.}
  \label{fig:goods_6}
  \end{figure}

In the classical plane $\R^2$ we have  the following norm \label{dfn:norm} 
$$\|p\|=\max\{\|p_1\|, \|p_2\|, \|p_1-p_2\|\}, \qquad  p\in\R^2.$$
It is easy to check that $\|p\|$ is the integer length  of the tropical segment $\span(p,0)$, if we identify $p=(p_1,p_2)$ with $[p_1,p_2,0]$. For instance, $\|(-5,-2)\|=5$, $\|(-3,5)\|=8$. The unit ball and some radii in it are shown in figure \ref{fig:unit_ball_7}. Given real  points $p,q\in\R^2$ the \emph{tropical distance} between $p$ and $q$ is $\|p-q\|$,  by definition. It is the integer length  of the tropical segment $\span(p,q)$.
This  is connected with the Hilbert projective metric appearing in
\cite{Cohen,Develin,Joswig_S} and  to the  range seminorm of \cite{Cuninghame_B}.

  \begin{figure}[H]
 \centering
  \includegraphics[keepaspectratio,width=10cm]{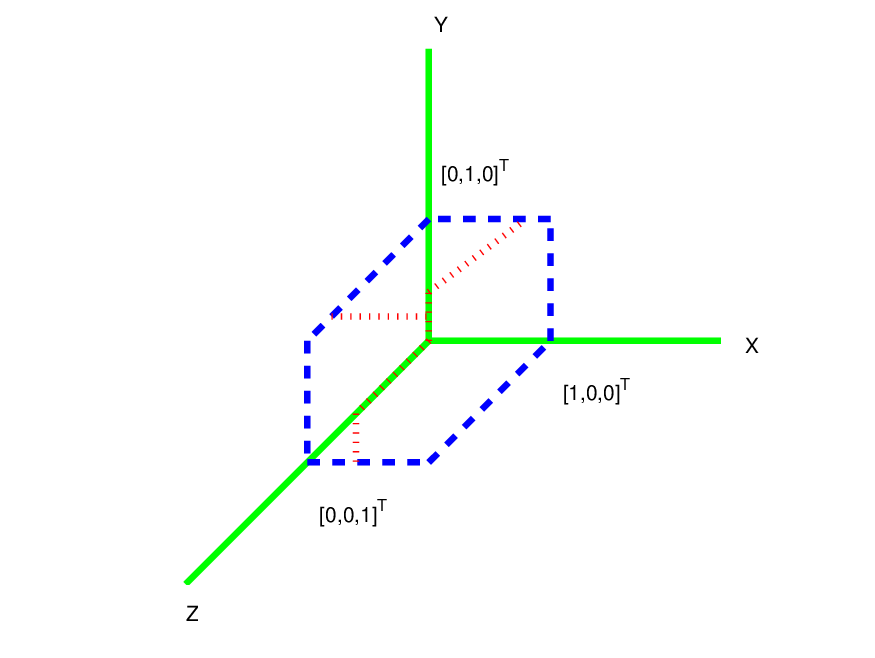}\\
  \caption{Axes and unit ball in the tropical the plane. Some rays are shown in dotted lines.}
  \label{fig:unit_ball_7}
  \end{figure}

\m In $\T\P^2$, let $V$ be the tropical span of a finite family of points.  In \cite{Cohen,Develin,Joswig_S}, a  \label{dfn:projector} \emph{projector map} (or \emph{nearest point map}) $\rho:\T\P^2\to V$ is considered. It satisfies $\rho\circ\rho=\rho$  and $\rho|_V=\id_V$. For a point $p\in\T\P^2\setminus V$, the image $\rho(p)$ is computed as follows: fix a representative $p'\in\T^3$ of $p$  and, for each generator $v$ of $V$, choose a representative $v'\in\T^3$, optimal for the condition $v'\le p'$ (meaning $v'_j\le p'_j$, for $j=1,2,3$ and equality is attained for, at least, one $j$). Tropically add all such $v's$ and then, take $\rho(p)$ to be the point in $\T\P^2$ represented by the sum.  In \cite{Cohen,Develin,Joswig_S} it is shown that $\rho(p)\in V$ minimizes the tropical distance $\|p-q\|$, when $q$ runs through $V$. In general, there are  infinitely many  points  $q$ in $V$ minimizing such a distance, in addition to $\rho(p)$. Indeed,  consider tropical balls $B(d)$ centered at $p$ of increasing radius $d$ and take the minimum
$d>0$ such that the intersection $B(d)\cap V$ is non--empty. Then $B(d)\cap V$ is the set of minimizing points.

\section{Matrices, maps and pictures in $Z=0$}
All arrays will have entries in $\T$. Arrays will be denoted by capital letters $A,B,C$, $N,P,Q$, etc.
Tropical matrix addition and multiplication are defined in the usual way, but using the  tropical operations $\oplus$ and $\odot$, instead of the classical ones.
 Any array all whose entries are zero will be denoted by 0. Given two arrays of the same size $A=(a_{ij})$, $B=(b_{ij})$, we will write $A\le B$ if $a_{ij}\le b_{ij}$, for all $i,j$.

\m
We will  deal with  $3\times 3$ matrices. The \emph{tropical determinant} of a $3\times 3$
 matrix $A=(a_{ij})$ (also called \emph{tropical permanent})
 is defined as $$|A|_{trop}=\max_{\sigma\in \Sigma_3}\{a_{1\sigma(1)}+a_{2\sigma(2)}+a_{3\sigma(3)}\},$$
where $\Sigma_3$ denotes the symmetric group in $3$ symbols. A
matrix is  \emph{tropically singular} if \emph{the maximum
in the tropical determinant is attained, at least, twice}.
Otherwise the matrix is \emph{tropically regular}, or it is said to have a \emph{strong permanent}. These are all standard definitions.

\m Given a matrix $A$, the $j$--th column (resp. row) of $A$ will be denoted $\col(A,j)$ (resp. $\row(A,j)$). The triple of diagonal entries of $A$   will be denoted $\diag(A)$. Moreover, if $t\in\R^3$, then $\diag(t)$ will denote the matrix whose diagonal is $t$, the rest of entries being equal to $-\infty$; such matrices will be called  \emph{diagonal matrices}. A \emph{permutation matrix} \label{dfn:permutation_matrix} is a matrix  obtained from a diagonal matrix, by permuting some of its rows or permuting some of its columns.
A particular case is the \emph{tropical identity matrix},
$I=\diag(0)$. Another example is \label{matrix:perm} $$P_{12}=\left[ \begin{array}{ccc} -\infty&0&-\infty\\ 0&-\infty&-\infty\\ -\infty&-\infty&0\\ \end{array}\right].$$ Any permutation matrix $P$ has a tropical inverse $P^{\odot-1}$, meaning $P\odot P^{\odot-1}=P^{\odot-1}\odot P=I$.

\m From now on, \emph{points in $\T\P^2$ will be denoted by columns}, for convenience. We often \emph{identify a  $3\times 3$ matrix $A$ with the three points in $\T\P^2$ represented by its columns}.

\m The reader can easily check that left--multiplication by the matrix $P_{12}$  exchanges coordinates $X$ and $Y$:
$$P_{12}\odot \co xyz=\co yxz.$$
A triple $t=(t_1,t_2,t_3)\in \R^3$ gives rise to a \emph{translation}  in $\T\P^2$:
$$\co XYZ\mapsto  \co {X'}{Y'}{Z'}=\co{t_1+X}{t_2+Y}{t_3+Z}=\diag(t)\odot \co XYZ,$$
By a \emph{change of projective coordinates} in the tropical projective plane $\T\P^2$ we mean
left--multiplying coordinates by a permutation matrix.
Therefore, a change of projective coordinates amounts to the composition 
of a translation and a permutation of coordinates.  
Notice that right--multiplying  $A$  by a diagonal matrix does not change the columns of $A$ in $\T\P^2$; it only changes the representatives of them.

\m
 All  pictures will be done in the affine tropical plane $Z=0$. \label{dfn:work_in Z=0} In order to do so, from a given matrix $A$ we compute
 the matrix \begin{equation}
A_0=A\odot\diag(-\row(A,3)).\label{eqn:A_0}
\end{equation}

\m From now on, \emph{suppose that $A$ is real}.  Our aim is  to describe  the map $f_A:\T\P^2 \to \T\P^2$
$$\co xyz\mapsto A\odot\co xyz=\left[
\begin{array}{c}
\max\{a_{11}+x,a_{12}+y,a_{13}+z\}\\
\max\{a_{21}+x,a_{22}+y,a_{23}+z\}\\
\max\{a_{31}+x,a_{32}+y,a_{33}+z\}\\
\end{array}
\right].$$  First, notice that proportional matrices   $A$ and $\lambda\odot A$ determine the same map $f_{A}=f_{\lambda\odot A}$, any $\lambda\in\R$.
The simplest examples of maps $f_A$ arise for $A=I$ (resp.  $A=0$),  the map being the identity (resp. constant). It is constant also for $f_{A\odot 0}$ and  $f_{0\odot A}$, because all the columns  of $A\odot 0$ (resp. $0\odot A$)  represent the same point in $\T\P^2$. \label{asser:constant}

\m The map $f_A$ is obviously continuous and piecewise--linear. The image $\im f_A$ is  the tropical triangle spanned by $A$, meaning that it is spanned by  the columns of $A$:
\begin{equation}\label{eqn:im}
\TT_A=\im f_A=\span(A).
\end{equation}
\emph{The map $f_A$ is not surjective}, since no finite family of points with finite coordinates  span the whole $\T\P^2$; this  is well--known (see, e.g.,  \cite{Wagneur}).
Moreover,
if $r,s\in\R$  are negative and big enough, we have
$$A\odot \co0{r}{s}=\col(A,1),\ A\odot \co{s}0{r}=\col(A,2),\ A\odot \co{r}{s}0=\col(A,3).$$
Therefore, $f_A$ is \emph{locally constant} on  three big chunks of $\T\P^2$, called \emph{corners}; see figure \ref{fig:montr_corners_1}.\label{dfn:corners} In particular, \emph{$f_A$ is not injective.} \label{asser:injective}

\begin{figure}[H]
 \centering
  \includegraphics[width=6cm,keepaspectratio]{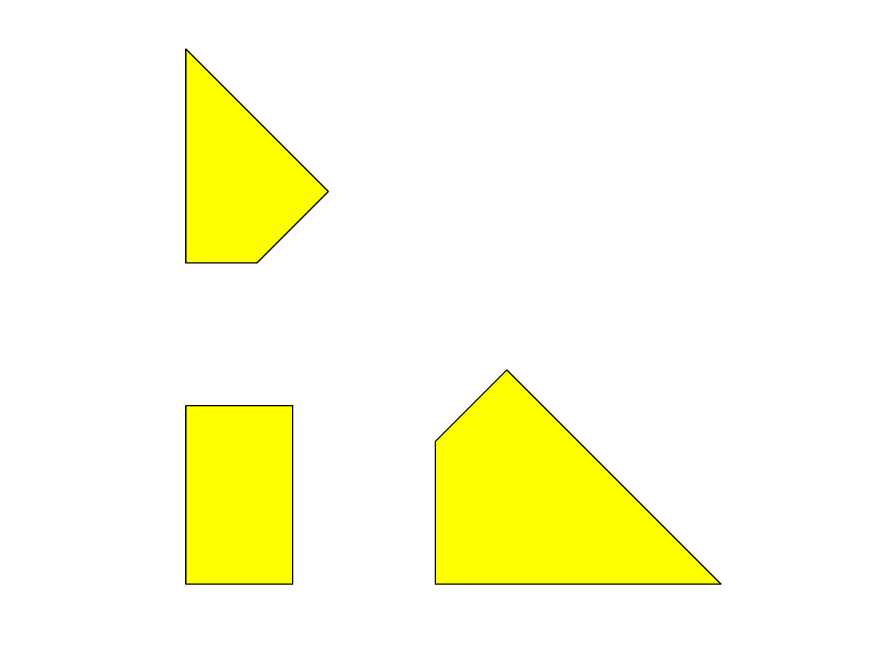}\\
  \caption{Corners.}
  \label{fig:montr_corners_1}
  \end{figure}

\m Let us see how do these  corners arise. First, the matrix $A$ defines
three tropical lines  $\su A$, because the $j$--th row of $A$ provides a tropical linear form
$$a_{j1}\odot X\oplus a_{j2}\odot Y\oplus a_{j3}\odot Z=\max\{a_{j1}+ X, a_{j2}+ Y, a_{j3}+Z\}.$$
 The vertices of $\su A$ are (represented by) the rows of $-A$, i.e., the columns of $-A^T$. Thus we have another tropical triangle here, namely \begin{equation}
\TT^A=\cospan(-A^T).\label{eqn:cospan}
\end{equation}
 The lines $\su A$ (or, rather, the matrix $A$) induce a
 \emph{cell decomposition on $\R^2$}, \label{dfn:cell_decom} denoted $\CC^A_\R$ (see   \cite{Develin} for an isomorphic cell decomposition).
The decomposition $\CC^A_\R$ consists of, at most,  31 cells, and this is the generic case. Every  cell $\Gamma$ is relatively open, i.e.,  $\Gamma$ is open inside its affine hull in $\R^2$.

In $\CC^A_\R$ we have:
\begin{itemize}
\item ten two--dimensional cells:    one bounded cell, denoted $B^A$, the three already mentioned \emph{corners} (denoted $\su {C^A}$), six unbounded cells (parallel to some tropical coordinate axis $X,Y$ or $Z$), 
\item fifteen  one--dimensional cells: nine unbounded cells (parallel to some coordinate axis) and six bounded cells,
\item six  zero--dimensional cells or points.
\end{itemize}
Notice that the union of all the bounded cells above is nothing but $\TT^A$. Moreover, $\overline{B^A}$ is the union of some bounded cells.

For later use, bounded cells will also be called \emph{central cells}; all other cells will be called \emph{peripheral cells}.
In figure \ref{fig:cell_decom_good_8} we find the 31 cells described above,  and  figure \ref{fig:cell_decom_bad_9} represents the cell decomposition induced by the matrix
\begin{equation}
A=\left[\begin{array}{ccc}
0&-1&-5\\-4&0&-2\\-1&-4&0
\end{array}\right], \ (-A^T)_0=\left[\begin{array}{ccc}
-5&2&1\\-4&-2&4\\0&0&0
\end{array}\right].\label{eqn:cell_decom_bad_9}
\end{equation}
\begin{figure}[H]
\centering
  \includegraphics[keepaspectratio,width=10cm]{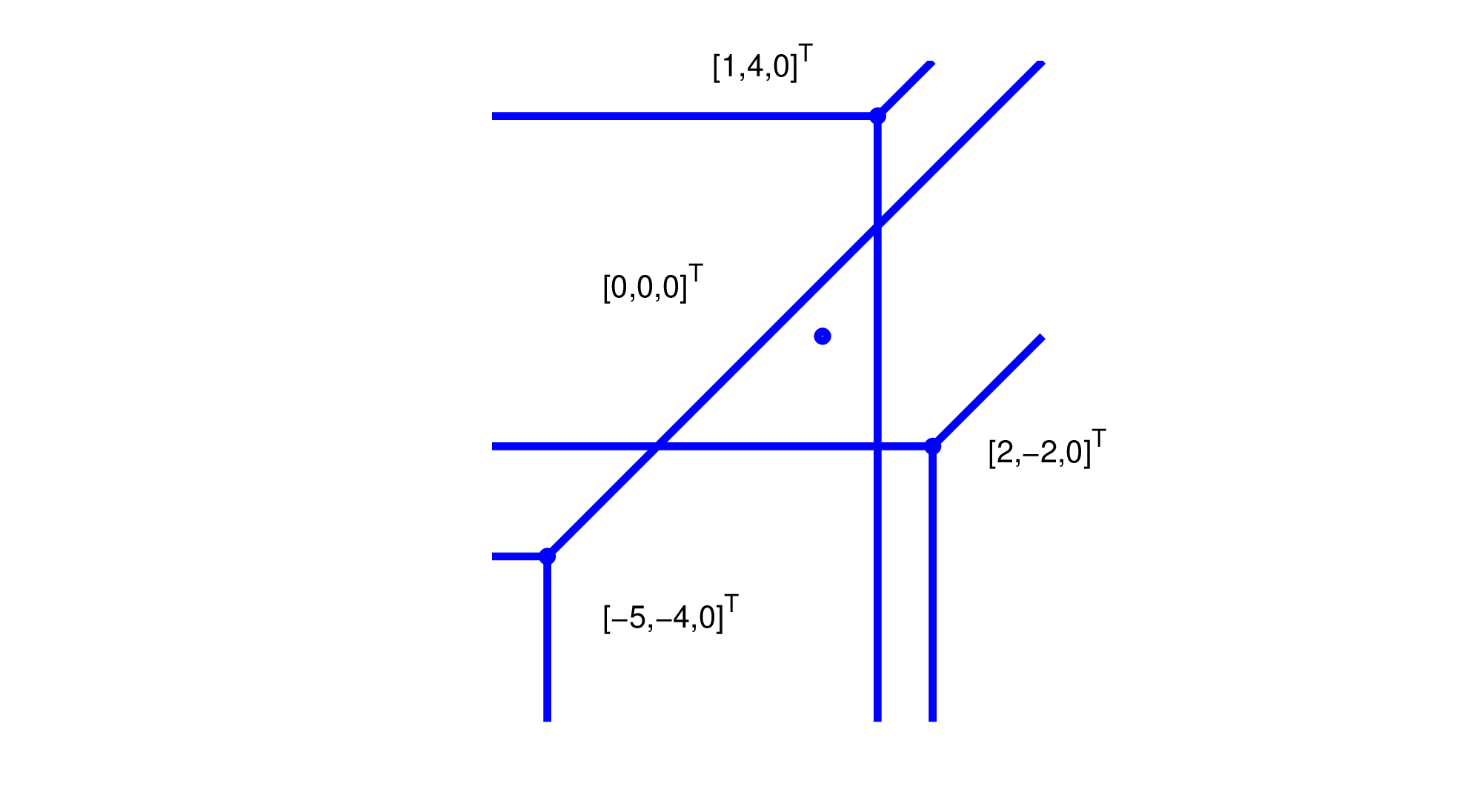}\\
  \caption{Cell decomposition $\CC^A_\R$ induced by matrix $A$  in (\ref{eqn:cell_decom_bad_9}).}
  \label{fig:cell_decom_bad_9}
  \end{figure}

  \begin{figure}[H]
 \centering
  \includegraphics[keepaspectratio,width=10cm]{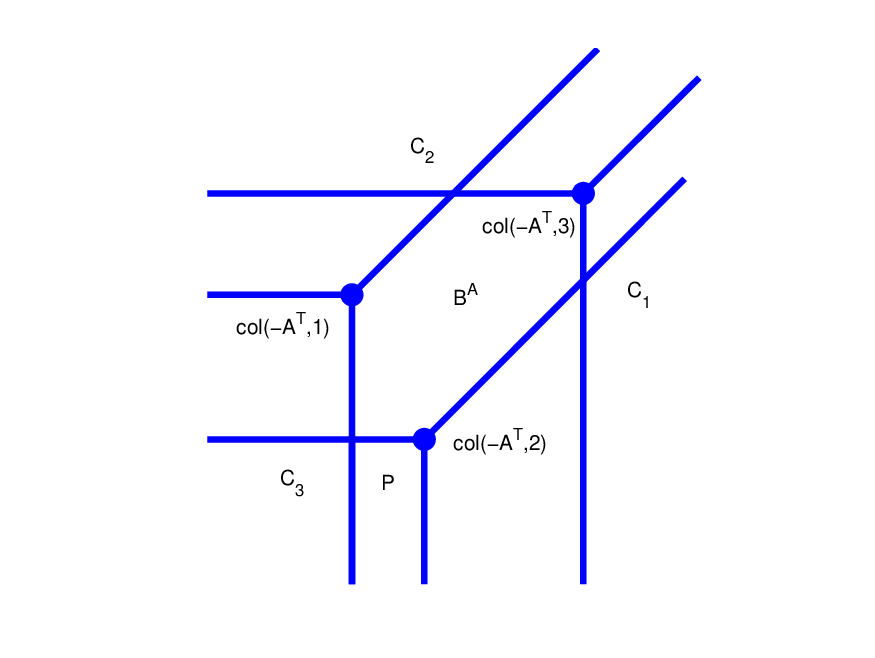}\\
  \caption{The 31 cells  in the cell decomposition $\CC^A_\R$ induced by some matrix $A$.}
  \label{fig:cell_decom_good_8}
  \end{figure}

$\T\P^2$ is a compactification of $\R^2$; \label{asser:compact2} cf.  p. \pageref{asser:compact}.  The set of \emph{boundary points} of this compactification is
 $$\partial=\T\P^2\setminus \R^2=\{[x,y,z]^T: x=-\infty \text{\ or\ }y=-\infty \text{\ or\ }z=-\infty.\}$$
Therefore, the cell decomposition $\CC^A_\R$ induces a cell decomposition $\CC^A$ \label{dfn:cell_decom2} of $\T\P^2$, which, in addition to all the cells in $\CC^A_\R$, contains
  \begin{itemize}
  \item $k$ one--dimensional cells,
  \item $k$ zero--dimensional cells or points,
  \end{itemize}
  for some $3\le k\le 12$. The union of these additional cells is $\partial$. Notice that in most  of our figures, we have not drawn $\partial$.
Since $\T\P^2$ is compact, it no longer makes  sense  talking about unbounded cells, but we have already introduced the alternative term \emph{peripheral}. Recall  that $B^A$ is \emph{ the central two--dimensional cell}. For instance, $B^A$ is empty, if $A=0$.

\m The description of the  map $f_A:\T\P^2\to\T\P^2$ is particularly easy  when the tropical triangles $\TT_A$ and $\TT^A$  fit as much as possible: then the action of $f_A$ is easily described on the closure of each cell $\Gamma$ of
 the  decomposition  $\CC^A$; see  theorem \ref{thm:f_A}.\label{dfn:fit}

\section{Normal matrices}

By definition, a  matrix $A$ is \emph{normal} if $\diag(A)=0$ and $A\le0$; in symbols,
\begin{equation}
I\le A\le0\label{eqn:normal}
\end{equation} see \cite{Butkovic_S}; in \cite{Cuninghame_New} a matrix $A$ such that $I\le A$ is called \emph{increasing}.
For any matrix $A$ there exist permutation matrices $P,Q$  such that the product
\begin{equation}\label{eqn:normalizacion}
N=P\odot A\odot Q
\end{equation}
is normal. The matrix $N$ is called \emph{a normalization of $A$}. The  \emph{Hungarian method} (see \cite{Butkovic_S,Kuhn,Papa}) is an algorithm to obtain such $N,P,Q$. A matrix $A$ admits several normalizations.
Notice that  the columns of $A$ and the columns of $A\odot Q$ represent the same points in $\T\P^2$, given perhaps in a different order.
And the columns of $N$ are a just a translation of those points.

\m As in classical mathematics, the product of matrices corresponds to the composition of  maps:  $$f_N=f_P\circ f_A\circ f_Q.$$
Now,  $f_P$ and $f_Q$ are changes of projective coordinates, so that  \emph{in order to study the map $f_A$, we may assume that $A$ is normal, up to  changes of coordinates.}

\m  A normal matrix $A$ satisfies $I\le A\le0$, and therefore
\begin{equation}
I\le A\le A^{\odot2}\le A^{\odot3}\le\cdots\label{eqn:desigualdades}
 \end{equation} and,  for any natural number $m$,
 \begin{equation}
A^{\odot m+1}\le A^{\odot m}\odot 0\label{eqn:desigualdades_m}
 \end{equation}
since tropical multiplication by any matrix is monotonic (because max and $+$ are monotonic). And  the map  $f_{A^{\odot m}\odot 0}$  is  constant, as explained in p. \pageref{asser:constant}.

In corollary \ref{cor:sqrt} we will  see that the tropical powers of $A$ are simpler than $A$ (in the sense that they depend on fewer parameters), when $A$ belongs to a particular class of normal matrices. This  simplification  will carry over to  the corresponding maps
$$\id=f_I,f_A,f_{A^{\odot2}},f_{A^{\odot3}},\ldots, \const.$$

\m Consider  the cell decomposition $\CC^0$ induced by  the zero matrix on $\T\P^2$; it is just the cell decomposition given by the tropical line $L_0$. It has three two--dimensional cells (corners),  which have the following description in $Z=0$:  $$C^0_1=\{0< x, y< x\},\qquad C^0_2=\{0< y, x< y\},\qquad C^0_3=\{x< 0, y< 0\}.$$
The  \emph{geometric meaning of normality} is the following:  if    $A$ is a $3\times 3$   normal  matrix then,
\begin{equation}
 \col(A_0,j)\in \overline{C^0_j},\quad\text{for all}\quad j=1,2,3. \label{eqn:col_0}
\end{equation}

\m Next  we  define several operators on  matrices and then we study the relationship among them. Of course, we are particularly interested in these operators acting on normal matrices.

\m  For any $k\in\N$, the tropical $k$--th power of $A$,  denoted  $A^{\odot k}$,  takes  normal matrices to normal matrices. The transpose $A^T$ of a normal matrix $A$ is a normal matrix.
These operators commute with each other.  Warning: $(-A)^{\odot2}\neq A^{\odot2}$, in general. Also, $(A_0)^{\odot2}\neq (A^{\odot2})_0$, in general.

\m We  introduce the \emph{tropical adjoint}  of $A$, denoted $\widehat{A}$. By definition, $\widehat{A}=(\alpha_{ij})$, where $\alpha_{ij}$ is the tropical cofactor of $a_{ji}$. In other words,
\begin{equation}
\row(\widehat{A},j)=\col(A,j-1)\otimes\col(A,j+1), \label{eqn:adjoint}
\end{equation}
for $j=1,2,3, \mod 3$.
Last, we define an auxiliary  matrix operator,  $\breve{A}=(\beta_{ij})$,  by the formulas
$$\beta_{ii}=0, \qquad \beta_{ij}=a_{ik}+a_{kj},\qquad \text{\ if \ } i\neq j \text{\ and\ } \{i,j,k\}=\{1,2,3\}.$$

\begin{lem}\label{lem:normal}
If $A$ is $3\times 3$  normal, then
\begin{enumerate}
\item $\breve{A}$ is normal and $\widehat{A}=A\oplus \breve{A}=A^{\odot 2}$,\label{cond:primera}
\item  $\widehat{A}$ is normal,\label{cond:segunda}
\item  $A^{\odot 2}=A^{\odot 3}$, \label{cond:tercera}
\item  every point in $\TT_{A^{\odot2}}=\span(A^{\odot2})$ is fixed by $f_A$,\label{cond:cuarta}
\item  zero (the neutral element for tropical multiplication) is an eigenvalue of $A$. \label{cond:quinta}
\end{enumerate}
\end{lem}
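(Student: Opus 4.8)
The plan is to prove item 1 by a direct entrywise computation and then read off items 2--5 as consequences. Write $A=(a_{ij})$ with $a_{ii}=0$, $a_{ij}\le0$, and set $B=A^{\odot2}=(b_{ij})$, so that $b_{ij}=\max_k\{a_{ik}+a_{kj}\}$. First I would evaluate $B$ using normality: on the diagonal the choice $k=i$ gives $a_{ii}+a_{ii}=0$ while every other summand $a_{ik}+a_{ki}$ is nonpositive, so $b_{ii}=0$; off the diagonal ($i\ne j$) the choices $k=i$ and $k=j$ each contribute $a_{ij}$ (since $a_{ii}=a_{jj}=0$), and the remaining choice of $k$ (with $\{i,j,k\}=\{1,2,3\}$) contributes $a_{ik}+a_{kj}=\beta_{ij}$, so $b_{ij}=\max\{a_{ij},\beta_{ij}\}$. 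By the definition of $\breve{A}$ this says precisely that $A^{\odot2}=A\oplus\breve{A}$, and $\breve{A}$ is normal because $\beta_{ii}=0$ and $\beta_{ij}=a_{ik}+a_{kj}\le0$. To obtain $\hat{A}=B$ I would expand the cofactor formula (\ref{eqn:adjoint}) through the coordinatewise expression of the cross product $\otimes$: normality forces every diagonal cofactor to equal $0$ and every off-diagonal cofactor to equal $\max\{a_{ij},\beta_{ij}\}$, matching $B$ entry by entry. This settles item 1, and item 2 then follows immediately, since $\hat{A}=A\oplus\breve{A}$ is an entrywise maximum of two normal matrices and hence again has zero diagonal and nonpositive entries.

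The heart of the argument is item 3. The inequality chain (\ref{eqn:desigualdades}) already gives $A^{\odot2}\le A^{\odot3}$, so it suffices to prove $A^{\odot3}\le A^{\odot2}$ entrywise, i.e.\ that each $(A^{\odot3})_{ij}=\max_{k,l}\{a_{ik}+a_{kl}+a_{lj}\}$ is at most $b_{ij}$. I would show that every three-term sum is dominated by a two-term sum $a_{im}+a_{mj}\le b_{ij}$. If $i=j$, the sum is a sum of nonpositive numbers and hence $\le0=b_{ii}$. If one of the coincidences $k=i$, $l=j$ or $k=l$ holds, then a diagonal entry equals $0$ and the sum collapses to a genuine term of the maximum defining $b_{ij}$. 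In the remaining configurations, with $i\ne j$ and $k\in\{j,t\}$, $l\in\{i,t\}$, $k\ne l$ (where $t$ is the third index), discarding a nonpositive pair such as $a_{kl}+a_{lk}$ or $a_{ij}+a_{ji}$ leaves $a_{ij}\le b_{ij}$; since the index set has size three, this is a short finite check. I expect this case analysis to be the main, though elementary, obstacle, as it is the only step that genuinely uses $n=3$. Combining both inequalities yields $A^{\odot2}=A^{\odot3}$.

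Items 4 and 5 are then immediate. For item 4, $f_A(\col(A^{\odot2},j))=\col(A\odot A^{\odot2},j)=\col(A^{\odot3},j)=\col(A^{\odot2},j)$ by item 3, and each such column represents a genuine point of $\T\P^2$ because its $j$-th (diagonal) entry equals $0$; hence the columns of $A^{\odot2}$ are fixed by $f_A$. For item 5, this fixed-point relation reads $A\odot v=v=0\odot v$ with $v=\col(A^{\odot2},j)$, which exhibits $0$ as a tropical eigenvalue of $A$.
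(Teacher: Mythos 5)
Your proof is correct, and its overall skeleton matches the paper's (item 1 by direct computation, item 2 as an immediate consequence, items 4 and 5 read off from item 3), but your argument for the central item 3 takes a genuinely different route. The paper never compares triple products entry by entry: it distributes to get $A^{\odot3}=A\odot(A\oplus\breve{A})=A^{\odot2}\oplus(A\odot\breve{A})$ and then invokes a second identity, $A\odot\breve{A}=A\oplus\breve{A}$, so that $A^{\odot3}=A^{\odot2}\oplus A^{\odot2}=A^{\odot2}$. You instead take $A^{\odot2}\le A^{\odot3}$ from (\ref{eqn:desigualdades}) and prove the reverse inequality $(A^{\odot3})_{ij}\le(A^{\odot2})_{ij}$ by a case analysis on the intermediate indices $(k,l)$: whenever an index coincidence occurs, the triple sum collapses to a genuine term of $(A^{\odot2})_{ij}$, and otherwise discarding a nonpositive ``cycle'' pair $a_{pq}+a_{qp}$ leaves $a_{ij}\le(A^{\odot2})_{ij}$. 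The paper's route is more compact and reuses the operator $\breve{A}$ it has already introduced, though it hides the combinatorics inside the unproved ``simple computation'' $A\odot\breve{A}=A\oplus\breve{A}$ (which itself needs an entrywise check of the same flavor as yours). Your route is longer but fully self-contained, and it has a conceptual advantage: the discard-a-nonpositive-cycle argument is exactly the one that proves $A^{\odot n-1}=A^{\odot n}$ for normal $n\times n$ matrices in general (the result the paper attributes to Yoeli), whereas the $\breve{A}$ identity is specific to size $3$. Your added remark in item 4, that each column of $A^{\odot2}$ has a zero diagonal entry and hence represents an actual point of $\T\P^2$, is a small but genuine point of care that the paper passes over silently.
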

\begin{proof}  A straightforward computation yields (\ref{cond:primera}) and then (\ref{cond:segunda}) follows.
Now, multiplication by $A$ is a monotonic operator; so that the equality in (\ref{cond:primera}) implies $A^{\odot 3}=\max\{A^{\odot 2}, A\odot \breve{A}\}$. Now, a simple computation shows that  $A\odot \breve{A}=A\oplus \breve{A}$, whence $A^{\odot 3}=A^{\odot 2}$ follows. Finally, (\ref{cond:cuarta}) follows from (\ref{cond:tercera}) and (\ref{cond:quinta}) follows from (\ref{cond:cuarta}).
\end{proof}

Lemma \ref{lem:normal}  follows from \cite{Yoeli},  where real matrices of any size $n$  are considered.
The so called \emph{Kleene star of $A$} (or \emph{strong closure of $A$}) is defined as
$$A^*=I\oplus A\oplus A^{\odot2}\oplus A^{\odot3}\oplus\cdots,$$ if the limit exists, see\cite{Akian_HB, Carre}. If $A$ is a $3\times 3$ normal matrix, then $A^*=A^{\odot2}$, but we will not use this.

\begin{lem}\label{lem:A=A2}
For a $3\times 3$ normal matrix $A$, the following  are equivalent:
\begin{enumerate}
\item $\breve{A}\le A$,
\item $A=A^{\odot 2}$, i.e., $A$ is idempotent, \label{cond:square}
\item  $\TT_A $ is  good.\label{cond:good}
\end{enumerate}
\end{lem}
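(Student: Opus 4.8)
The plan is to prove the two equivalences $(1)\Leftrightarrow(2)$ and $(1)\Leftrightarrow(3)$; once both hold, all three conditions are mutually equivalent. The first of these is essentially free: by lemma \ref{lem:normal}, part (\ref{cond:primera}), we have $A^{\odot2}=A\oplus\breve{A}$, where $\oplus$ denotes the entrywise maximum. Reading this equation entry by entry, $A=A^{\odot2}$ holds precisely when $\max\{a_{ij},\beta_{ij}\}=a_{ij}$ for every $i,j$, and this is exactly the condition $\breve{A}\le A$. So $(1)\Leftrightarrow(2)$ is an immediate consequence of the already-proved lemma.

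The content lies in $(1)\Leftrightarrow(3)$, and the idea is that the inequality $\breve{A}\le A$, once written out coordinatewise, coincides verbatim with the six slack inequalities (\ref{eqn:6}) characterizing good tropical triangles. First I would expand $\breve{A}\le A$ entrywise. The diagonal conditions read $\beta_{ii}=0=a_{ii}$ and hold automatically by normality, so only the six off-diagonal inequalities survive: for each ordered pair $i\neq j$ with $\{i,j,k\}=\{1,2,3\}$,
$$a_{ik}+a_{kj}\le a_{ij}.$$
Writing these out for the six off-diagonal positions gives six explicit inequalities among the entries $a_{ij}$.

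Next I would take the goodness inequalities (\ref{eqn:6}) and substitute $a_{11}=a_{22}=a_{33}=0$, which is forced by normality since $I\le A$ and $A\le 0$ together pin the diagonal to $0$. Each of the six inequalities then rearranges into exactly one of the off-diagonal conditions above: for instance $a_{12}-a_{22}\le a_{13}-a_{23}$ becomes $a_{12}+a_{23}\le a_{13}$, i.e.\ the $(1,3)$-entry condition $\beta_{13}\le a_{13}$; the inequality $a_{13}-a_{23}\le a_{11}-a_{21}$ becomes $a_{21}+a_{13}\le a_{23}$, i.e.\ $\beta_{23}\le a_{23}$; and similarly for the remaining four. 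Matching all six pairs establishes $\breve{A}\le A\iff(\ref{eqn:6})$, hence $(1)\Leftrightarrow(3)$.

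The only real obstacle is bookkeeping: one must keep straight the index triple $\{i,j,k\}$ in the definition of $\breve{A}$ and, in parallel, the column convention of the goodness theorem (column $1$ is $c$, column $3$ is $a$), so that the six inequalities are paired up correctly. Beyond this careful matching there is no genuine difficulty, because normality collapses each difference $a_{ii}$ to $0$ and thereby turns the two systems of inequalities into one and the same.
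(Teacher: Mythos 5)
Your proposal is correct and follows essentially the same route as the paper: the equivalence $(1)\Leftrightarrow(2)$ is read off from lemma \ref{lem:normal}, part (\ref{cond:primera}), and $(1)\Leftrightarrow(3)$ is obtained by setting $a_{jj}=0$ in the six inequalities (\ref{eqn:6}), which then coincide entrywise with $\breve{A}\le A$ --- exactly the six inequalities (\ref{eqn:good}) displayed in the paper's proof. Your write-up merely makes the index bookkeeping explicit, which the paper leaves to the reader.
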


\begin{proof}
The equivalence follows from lemma \ref{lem:normal} and the six inequalities (\ref{eqn:6}), letting $a_{jj}=0$, for $j=1,2,3$. Indeed, we obtain
$$a_{23}+a_{31}\le  a_{21},\qquad a_{32}+a_{21}\le  a_{31},$$
\begin{equation}
a_{13}+a_{32}\le  a_{12},\qquad a_{31}+a_{12}\le  a_{32},\label{eqn:good}
\end{equation}
$$a_{12}+a_{23}\le  a_{13},\qquad a_{21}+a_{13}\le  a_{23}.$$
\end{proof}

Suppose $A=(a_{ij})$ is normal and consider
$$A_0=\left[\begin{array}{ccc}
-a_{31}&a_{12}-a_{32}&a_{13}\\
a_{21}-a_{31}&-a_{32}&a_{23}\\0&0&0
\end{array}\right].$$
By a translation, we can assume that  $a_{13}=a_{23}=0$, so
that $\col(A_0,3)=0$. Write
\begin{equation}
t_{11}=-a_{31}, \ t_{22}=-a_{32}, \ t_{21}=a_{21}-a_{31}, \
t_{12}=a_{12}-a_{32},
\end{equation} so that

    \begin{equation}
A_0=\left[\begin{array}{ccc}t_{11}&t_{12}&0\\
t_{21}&t_{22}&0\\
0&0&0\\
\end{array}
\right]
\end{equation}
If,  in addition,  $A$ is idempotent, then
\begin{equation}
0\le t_{11},t_{22},\quad 0\le t_{21},t_{12}\le
\min\{t_{11},t_{22}\}\label{eqn:good2}
\end{equation}
and these $t_{ij}$ provide a \emph{parameter space for good
tropical triangles, up to translation}; see figure
\ref{fig:rectangle_13}.

The six points listed in
(\ref{eqn:six_points}) are (represented by) the columns of
$A$ and of $-\widehat{A}^T$, according to the definition of
adjoint matrix. They determine the shape of the tropical triangle
$\TT_A$.

\begin{lem}\label{lem:new}
If $A$ is a $3\times 3$ idempotent normal matrix, then
$A^T=(A^{\odot 2})^T=\widehat{A}^T$. In particular, $\TT_A$  is determined by
 the columns of $A$ and of $-A^T$.\qed
\end{lem}
Figure \ref{fig:normal_idempot_11}  illustrates the former lemma.

\begin{figure}[H]
 \centering
  \includegraphics[keepaspectratio,width=10cm]{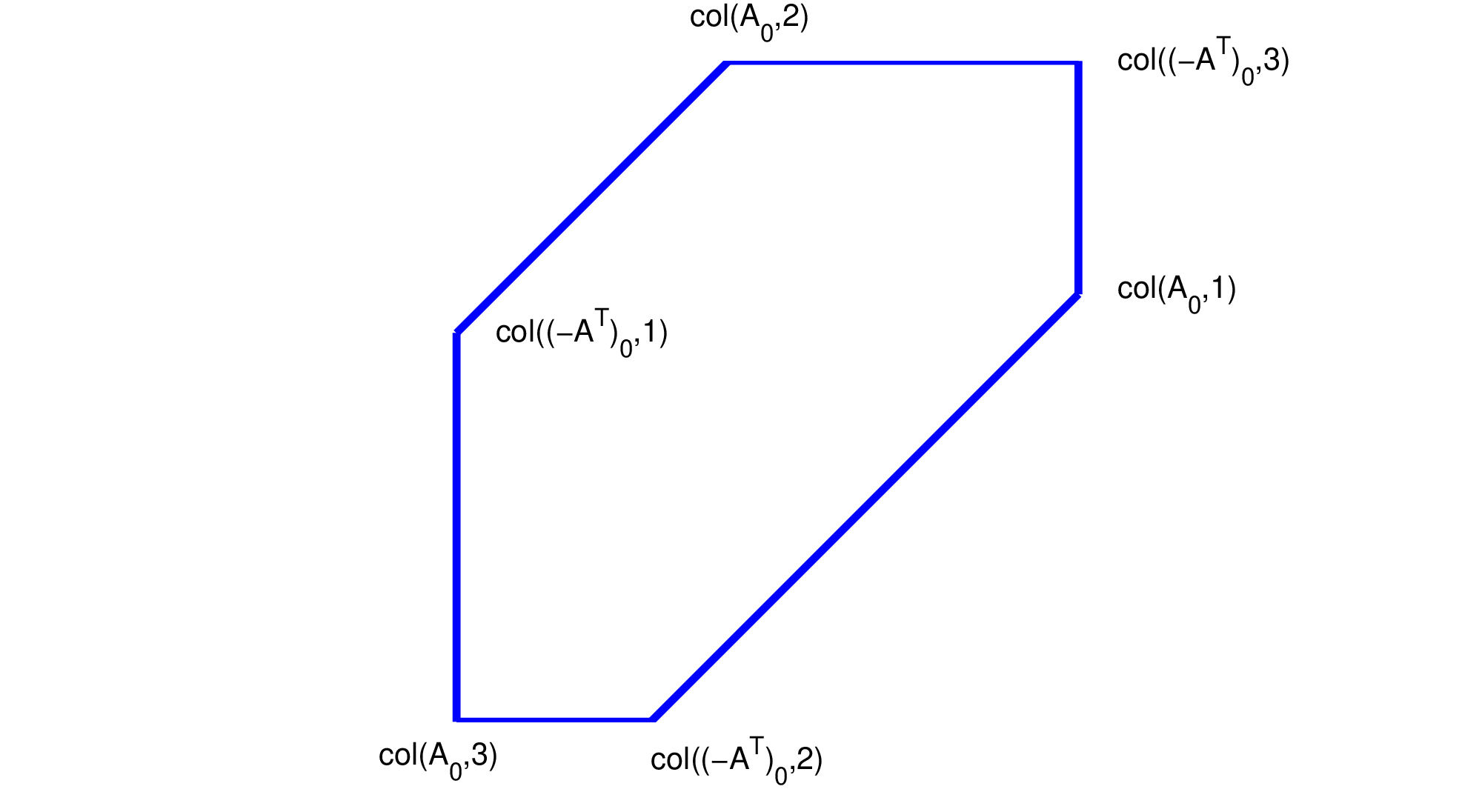}\\
  \caption{Tropical triangle associated to an idempotent normal matrix.}
  \label{fig:normal_idempot_11}
  \end{figure}

\section{Canonical normalization}
The geometric meaning of canonical normalization is  \emph{getting
pictures centered  at the origin of $Z=0$}.  We have
two equivalent ways to achieve this goal: \emph{upper and lower
canonical normalization}. The difference is irrelevant: just
an exchange of coordinates $X$ and $Y$. Our choice will be \emph{lower canonical
normalization}. We have used $L$ (resp. $U$) to mean \emph{lower}  (resp. \emph{upper}).

For each  $d,\su d\in\R$ consider the matrix
\begin{equation}
L(d,\su d)=\LLL\label{eqn:LLL}
\end{equation}
Notice the symmetric role played by $d_1$  with respect to
$X$, $d_2$  with respect to $Y$ and $d_3$  with respect to
$Z$.
We will use the matrices
\begin{equation}
L(d,\su d)_0=\LLLcero,\label{eqn:LLLcero}
\end{equation}
\begin{equation}
(-L(d,\su d)^T)_0=\LLLTmenoscero.\label{eqn:LLLTmenoscero}
\end{equation}

It is easy to check that if  $d\ge0$ and $-d\le d_j$, for  $j=1,2,3$ then $L(d,\su d)$ is normal. If, in addition, $\su d\ge0$, then $A$ is also idempotent. In this case, $\TT_A$ reduces to a segment if and only if $d=d_j=d_{j+1}=0$, for some $j=1,2,3$ modulo 3,\label{rem:l2} and $\TT_A$ reduces to a point if and only if $d=d_1=d_2=d_3=0$.\label{rem:l3}

\begin{figure}[H]
 \centering
  \includegraphics[keepaspectratio,width=12cm]{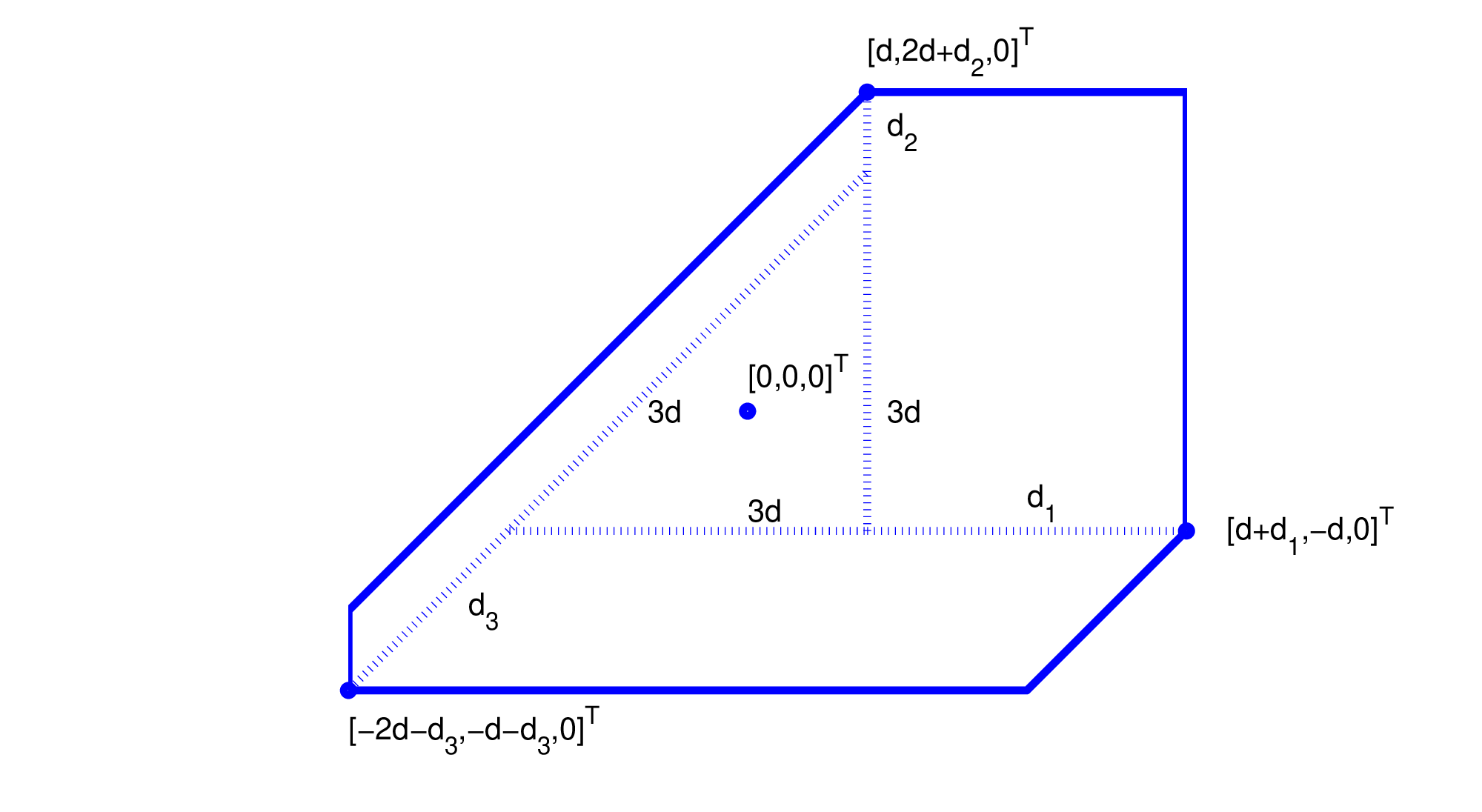}\\
  \caption{Tropical triangle given by the matrix $L(d,\su d)$, in $Z=0$. Dotted lines are auxiliary.}
  \label{fig:good_L_10}
  \end{figure}

In a similar fashion we can consider  the matrix $$U(d,\su
d)=\UUU.$$  Notice that   $P_{12}U(d,\su d)P_{12}=L(d,d_2,d_1,d_3)$. 

\begin{lem}[Lower canonical normalization for an idempotent normal matrix]\label{lem:canonicalA=A2}
If $A$ is a $3\times 3$ idempotent normal matrix, then there exist unique $d, \su d\ge0$ and
there exist permutation matrices $P,Q$ such that
$L(d,\su d)=P\odot A\odot Q$.
\end{lem}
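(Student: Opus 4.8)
The plan is to exploit that the ``permutation matrices'' of this paper are monomial (a permutation together with an arbitrary real diagonal): left-- and right--multiplication by $P,Q$ therefore realizes a relabeling of the three columns and coordinates together with diagonal scalings, $(P\odot A\odot Q)_{ij}=p_i+a_{\pi(i)\rho(j)}+q_j$. Since the target $D(d,\su d)$ is a \emph{real} normal matrix, the admissible moves reduce to a simultaneous relabeling (the row and column permutations must agree to keep the product normal) followed by a diagonal conjugation $a_{ij}\mapsto a_{ij}+p_i-p_j$. First I would record the defining feature of the target family: $D(d,\su d)$ is precisely the normal idempotent matrix whose three differences
$$D_{13}-D_{23}=D_{21}-D_{31}=D_{32}-D_{12}=-d$$
agree; reading these relations backwards recovers $\su d$ from the entries.

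The key to uniqueness is to isolate the invariants of diagonal conjugation. Both cyclic products $c_{123}=a_{12}+a_{23}+a_{31}$, $c_{132}=a_{13}+a_{32}+a_{21}$ and the three symmetric sums $a_{ij}+a_{ji}$ are fixed by $a_{ij}\mapsto a_{ij}+p_i-p_j$. Matching them against $D(d,\su d)$ forces the closed forms
$$3d=c_{123}-c_{132},\quad d_1=a_{32}-a_{31}-a_{12},\quad d_2=a_{13}-a_{12}-a_{23},\quad d_3=a_{21}-a_{23}-a_{31},$$
so, once a labeling is chosen, $d$ and $(\su d)$ are completely determined by $A$.

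For existence I would invoke lemma \ref{lem:A=A2}: idempotency of $A$ makes $\TT_A$ good, i.e. all six slacks of (\ref{eqn:good}) are nonnegative. As $d_1,d_2,d_3$ above are three of those slacks, they are automatically $\ge0$ for either orientation, so it only remains to choose the permutation class for which $d=\tfrac13(c_{123}-c_{132})\ge0$. With the labeling fixed, the conjugating diagonal is found by solving $p_1-p_2=-d-(a_{13}-a_{23})$ and $p_2-p_3=-d-(a_{21}-a_{31})$; the third equation is automatically consistent because the three differences sum to $-3d$, so $p$ exists and is unique up to an additive constant, yielding $D(d,\su d)=P\odot A\odot Q$ with $d,\su d\ge0$.

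The main obstacle is the discrete bookkeeping behind uniqueness. An odd permutation swaps $c_{123}\leftrightarrow c_{132}$, hence sends $d\mapsto-d$ and ejects the matrix from the family (which demands $d\ge0$), so exactly one permutation class is admissible; within it a direct computation shows that conjugation by a $3$--cycle fixes $d$ and cyclically shifts $D(d,d_1,d_2,d_3)\mapsto D(d,d_2,d_3,d_1)$. Thus $d$ and the cyclic class of $(\su d)$ are canonical---reflecting the symmetric role of $d_1,d_2,d_3$ with respect to $X,Y,Z$---and the delicate final step is to remove this residual $\mathbb{Z}/3$ by a definite rule (assigning the East--, North-- and South--West--most columns in the spirit of (\ref{eqn:col_0}), with a fixed tie--break), thereby pinning $P,Q$ up to the unavoidable scalar $P\mapsto\lambda\odot P,\ Q\mapsto(-\lambda)\odot Q$. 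The degenerate strata $c_{123}=c_{132}$ (so $d=0$) or coincident slacks are treated separately and only make the competing choices coincide, so they cause no trouble.
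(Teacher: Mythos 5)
Your proposal takes a genuinely different route from the paper, and its existence half is complete and, in one respect, sharper. The paper argues geometrically: it translates so that $\col(A_0,3)=0$, invokes the good--triangle parameters $t_1,t_2,t_3,t_4$ of (\ref{eqn:good2}), and exhibits a single diagonal matrix $Q=\diag(d_3+2d,d_3+d,0)$ with $D(d,\su d)=Q^{\odot-1}\odot A\odot Q$. You argue algebraically, through the invariants of diagonal conjugation; your closed forms check out against $D(d,\su d)$: indeed $3d=c_{123}-c_{132}$, $d_1=a_{32}-a_{31}-a_{12}$, $d_2=a_{13}-a_{12}-a_{23}$, $d_3=a_{21}-a_{23}-a_{31}$, these three quantities are exactly three of the six slacks of (\ref{eqn:good}) supplied by lemma \ref{lem:A=A2}, and your two equations for the conjugating diagonal are consistent precisely because the three differences sum to $-3d$. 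Your orientation discussion is not pedantry: diagonal conjugation preserves $c_{123}-c_{132}$, which in the paper's notation equals $t_4-t_3$, so when $t_3>t_4$ no diagonal conjugation of $A$ can land in the family and an odd coordinate permutation is needed first --- a step the paper's construction (with $d=\tfrac13|t_3-t_4|$ but $Q$ diagonal) silently omits and your argument makes explicit.

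The gap is your final uniqueness step, and the repair you propose cannot work. Every matrix of the family $D(d,\su d)$ is normal, hence by (\ref{eqn:col_0}) already has its East--, North-- and South--West--most columns in positions $1,2,3$; so the cardinal--direction rule does not distinguish $D(d,d_1,d_2,d_3)$ from its simultaneous cyclic relabeling, and the residual $\mathbb{Z}/3$ you identified is a genuine ambiguity, not bookkeeping. Concretely, $A=D(1,1,2,3)$ is normal idempotent, $I\odot A\odot I=D(1,1,2,3)$, while conjugating by the cyclic permutation matrix $P$ (and $Q=P^{\odot-1}$) gives $P\odot A\odot Q=D(1,2,3,1)$: two distinct nonnegative parameter tuples, each realized by permutation matrices, so the uniqueness asserted in the statement fails as literally written and can only be restored by an extrinsic convention on the ordered triple $(\su d)$ (say, minimal among its cyclic shifts) --- which amends the statement rather than proves it. Your closing remark about degenerate strata is also backwards: at $d=0$ odd relabelings become admissible as well, and the ambiguity grows to all of $\Sigma_3$ (e.g.\ $D(0,1,2,3)$ and $D(0,2,1,3)$ are conjugate) instead of collapsing. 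In fairness, the paper's own proof (``uniqueness follows from the geometric meaning of the parameters'') glosses over exactly the same point, as well as the scalar ambiguity $P\mapsto\lambda\odot P$, $Q\mapsto(-\lambda)\odot Q$ that you rightly call unavoidable; your analysis locates the difficulty precisely, but it does not close it, and as stated it cannot be closed.
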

\begin{proof} A translation allows us to assume that $\col(A_0,3)=0$.   Then
$$A_0=\left[\begin{array}{ccc} t_{11}&t_{12}&0\\t_{21}&t_{22}&0\\0&0&0
\end{array}\right],\qquad A=\left[\begin{array}{ccc} 0&t_{12}-t_{22}&0\\t_{21}-t_{11}&0&0\\-t_{11}&-t_{22}&0
\end{array}\right],$$ with
$0\le t_{11},t_{22}$ and $0\le t_{21},t_{12}\le
\min\{t_{11},t_{22}\}$.

Assume that $t_{21}\le t_{12}$ (see figure \ref{fig:rectangle_19}). Then we take   $d=\frac{1}{3}(t_{12}-t_{21})$, $d_1=t_{11}-t_{12}$, $d_2=t_{22}-t_{12}$, $d_3=t_{21}$ and $Q=\diag(d_3+2d,d_3+d,0)$, obtaining $L(d,\su d)=Q^{\odot-1}\odot A\odot Q$.

Now,  assume that $t_{21}\ge t_{12}$ (see figure \ref{fig:rectangle_18}). Then we take   $d=\frac{1}{3}(t_{21}-t_{12})$, $d_2=t_{11}-t_{21}$, $d_1=t_{22}-t_{21}$, $d_3=t_{12}$ and $R=\diag(d_3+d,d_3+2d,0)$, obtaining $U(d,d_2,d_1,d_3)=R^{\odot-1}\odot A\odot R$. Now $L(d,\su d)=P_{12}\odot U(d,d_2,d_1,d_3)\odot P_{12}$.


The uniqueness of $d,\su d$ follows from the geometric meaning of these parameters.
\end{proof}

   \begin{figure}[H]
 \centering
  \includegraphics[keepaspectratio,width=10cm]{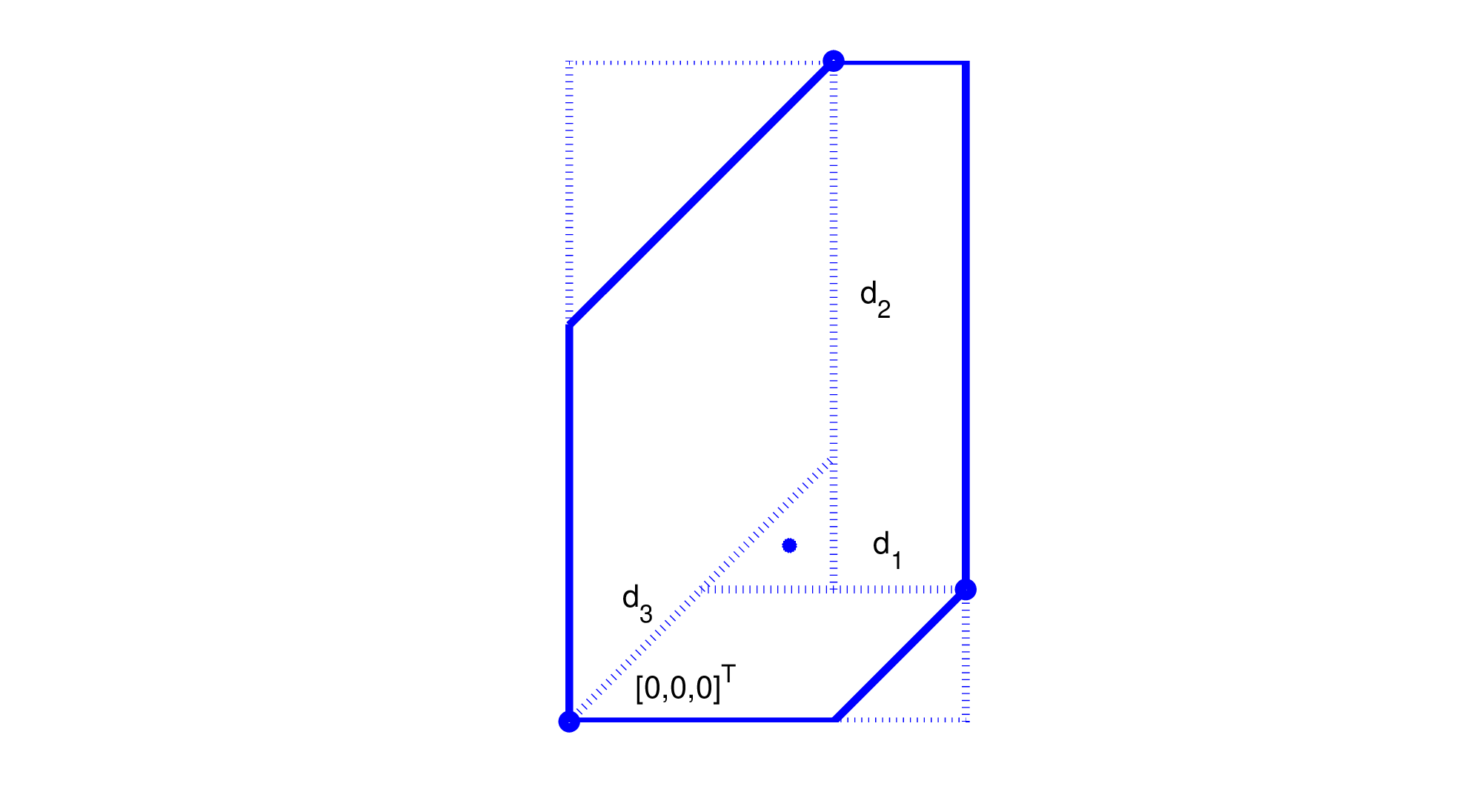}\\
  \caption{Looking for $d,\su d$  in the proof of lemma \ref{lem:canonicalA=A2} , with $t_{21}\le t_{12}$.}
  \label{fig:rectangle_19}
  \end{figure}

  \begin{figure}[H]
 \centering
  \includegraphics[keepaspectratio,width=10cm]{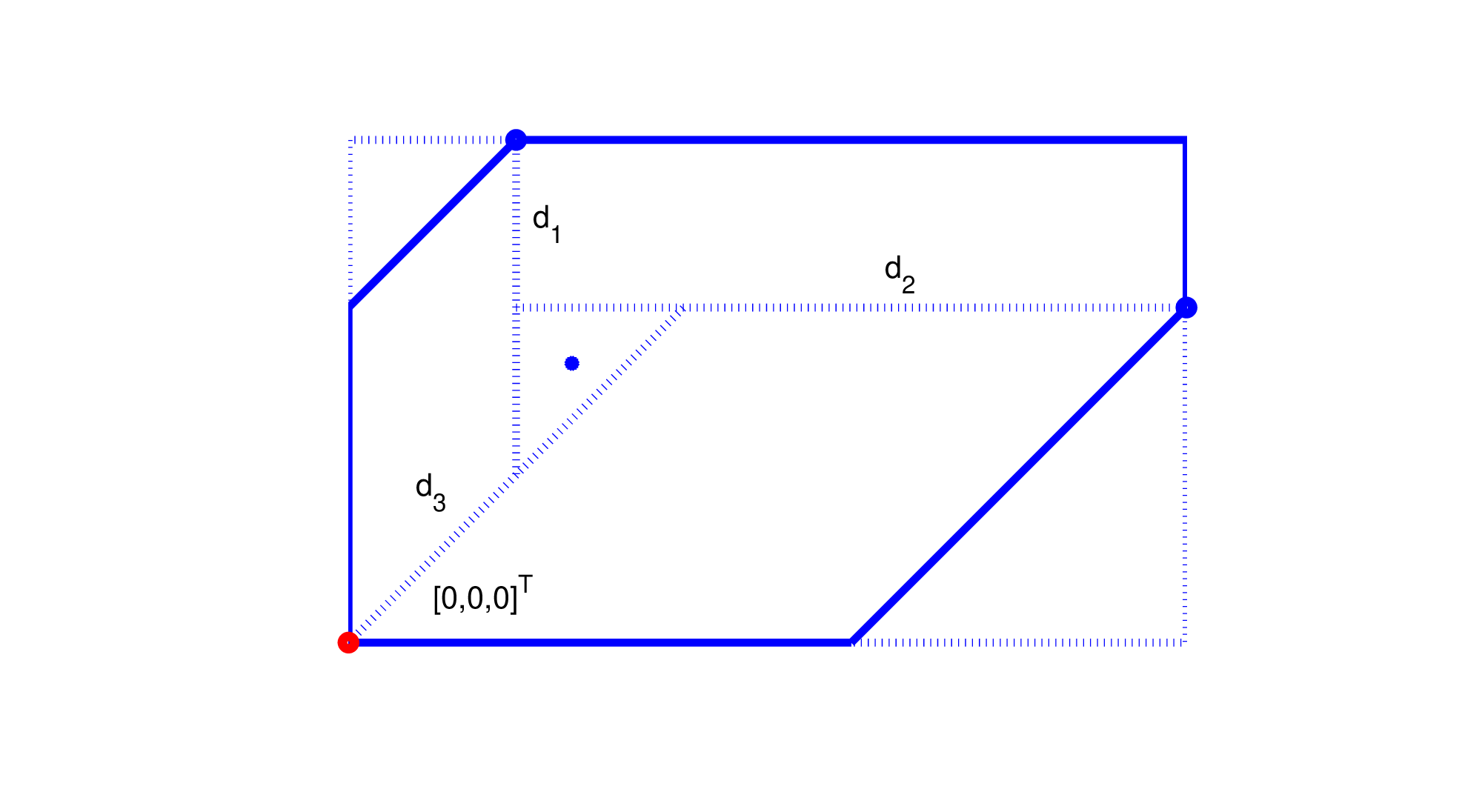}\\
  \caption{Looking for $d,\su d$ in the proof of lemma \ref{lem:canonicalA=A2}, with $t_{21}\ge t_{12}$.}
  \label{fig:rectangle_18}
  \end{figure}

\begin{ex} Suppose that $A$ is idempotent normal with $t_{12}=t_{21}$ and $t_{11}=t_{22}$.
$$A_0=\left[\begin{array}{ccc} t_{11}&t_{12}&0\\t_{12}&t_{11}&0\\0&0&0
\end{array}\right].$$ Then $0\le t_{12}\le t_{11}$ and the new three points shown in figures \ref{fig:rectangle_18} and \ref{fig:rectangle_19} collapse to $[t_{12},t_{12},0]^T$. In this case $d=0$, $d_1=d_2=t_{11}-t_{12}$, $d_3=t_{12}$, $Q=\diag(d_3,d_3,0)$ and $L=Q^{\odot -1}\odot A\odot Q$ satisfies
$$L=\left[\begin{array}{ccc}
0&t_{12}-t_{11}&-t_{12}\\
t_{12}-t_{11}&0&-t_{12}\\
t_{12}-t_{11}&t_{12}-t_{11}&0\\
\end{array}\right], \qquad L_0=\left[\begin{array}{ccc}
t_{11}-t_{12}&0&-t_{12}\\
0&t_{11}-t_{12}&-t_{12}\\
0&0&0\\
\end{array}\right].$$ Notice that $P_{12}\odot L \odot P_{12}=L$.
\end{ex}


\begin{cor}\label{cor:convex} A good tropical triangle is
 classically convex in $Z=0$.
\end{cor}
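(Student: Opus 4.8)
The plan is to reduce an arbitrary good tropical triangle to the canonical matrix $D(d,\su d)$ and to verify convexity only there. First I would record that every change of projective coordinates acts on the chart $Z=0$ as a \emph{classical affine bijection} of $\R^2$: a translation $\diag(t)$ acts as the classical translation by $(t_1-t_3,t_2-t_3)$, while a permutation of the homogeneous coordinates $X,Y,Z$ acts linearly (e.g. swapping $X$ and $Z$ sends $(x,y)\mapsto(-x,y-x)$, after renormalizing to $Z=0$). A fixed composite of such maps is therefore an affine bijection of $\R^2$, and affine bijections preserve classical convexity. Hence it suffices to prove the corollary for one representative of each class of good triangles modulo changes of projective coordinates.

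Next I would carry out the reduction. A good tropical triangle is $\span(A)$, where the columns of $A$ are its three defining points. Normalizing $A$ (by the Hungarian method) gives $N=P\odot A\odot Q$ with $N$ normal, and $\span(N)=f_P(\span(A))$ since right multiplication by the permutation matrix $Q$ only reorders and rescales the generating columns. As $f_P$ is a tropical-linear automorphism of $\T\P^2$ commuting with stable join and stable intersection, $\TT_N$ is again good, so Lemma \ref{lem:A=A2} yields $N=N^{\odot2}$. Lemma \ref{lem:canonicalA=A2} then provides permutation matrices carrying $N$ to $D(d,\su d)$ for unique $d,\su d\ge0$. Thus, up to a classical affine bijection of $Z=0$, every good tropical triangle equals $\span(D(d,\su d)_0)$, and it remains only to show that this set is classically convex.

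The core step is to prove convexity of $\span(D(d,\su d)_0)$, where the three spanning points of $Z=0$ are the columns of
$$D(d,\su d)_0=\DDDcero.$$
Writing $D_0=D(d,\su d)_0$ for short, I would use the sweep description $\TT=\bigcup_{s\in\span(\col(D_0,2),\col(D_0,3))}\span(\col(D_0,1),s)$ from (\ref{eqn:barrido}): each tropical segment occurring is a union of two classical segments of slopes in $\{0,1,\infty\}$, and their vertices are the points $-(\col(D_0,i)\otimes\col(D_0,j))$. Computing these six boundary points together with their supporting lines exhibits $\span(D_0)$ as the solution set of an explicit finite system of classical linear inequalities, all of slopes $0,1,\infty$ — precisely the chopped rectangle of figure \ref{fig:rectangle_13}, drawn for $D$ in figure \ref{fig:good_D_10}. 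An intersection of half-planes is classically convex, which settles the canonical case and, by the reduction above, the general one.

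The main obstacle is exactly this last computation: showing that the tropical span, \emph{a priori} only a union of bent segments, fills out the \emph{entire} classical convex hull of its six boundary points with no concavity, and that this holds uniformly over all admissible $d,\su d\ge0$, including the degenerate configurations (some $d_j$ or $d$ vanishing) in which the hexagon collapses to a pentagon, quadrangle, triangle or segment. Verifying that the half-plane description — and hence convexity — persists through these degenerations is where the care is required.
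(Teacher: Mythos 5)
Your reduction step is sound and is essentially the paper's own first move: translate/normalize so that Lemma \ref{lem:A=A2} applies (goodness of $\TT_N$ for normal $N$ forces $N=N^{\odot2}$), then invoke Lemma \ref{lem:canonicalA=A2} to replace $N$ by $D(d,\su d)$ with $d,\su d\ge0$. Your observation that every change of projective coordinates acts on the chart $Z=0$ as a classical affine bijection, hence preserves both goodness and classical convexity, is a point the paper leaves implicit, and it is a genuine improvement in rigor. The problem is what comes next. For the canonical case you only \emph{assert} that computing the six boundary points and their supporting lines "exhibits $\span(D_0)$ as the solution set of an explicit finite system of classical linear inequalities," and your closing paragraph concedes that carrying this out --- showing the swept union of bent segments fills the whole hexagon with no concavity, uniformly over all admissible parameters --- is "the main obstacle." That verification \emph{is} the content of the corollary, so as written the proposal names the difficulty rather than resolving it: a genuine gap.

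The paper closes exactly this gap with an algebraic identity your proposal never uses. By part (\ref{cond:primera}) of Lemma \ref{lem:normal} together with Lemma \ref{lem:A=A2}, an idempotent normal matrix satisfies $\hat{A}=A^{\odot2}=A$; hence, by the adjoint formula (\ref{eqn:adjoint}) and the tropical Cramer rule, the bending points $-\left(\col(A,i)\otimes\col(A,j)\right)$ of the three sides are not unknowns to be computed afresh --- they are precisely the columns of $-A^{T}$. For $A=D(d,\su d)$ all six distinguished points therefore carry the explicit coordinates displayed in (\ref{eqn:DDDcero}) and (\ref{eqn:DDDTmenoscero}), and one reads off directly that they are, in cyclic order, the vertices of a classical polygon with sides of slopes $0,1,\infty$ and integer side lengths $3d+d_2,\ d_1,\ 3d+d_3,\ d_2,\ 3d+d_1,\ d_3$: the chopped rectangle of figure \ref{fig:rectangle_13}. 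The triangle $\TT_A$ is then its convex hull, with the sweep (\ref{eqn:barrido}) now running over segments all of whose endpoints and bending points are known. The degenerations you worry about are harmless in this formulation: vanishing of $d$ or some $d_j$ merely makes some of the six explicit vertices coincide, collapsing the hexagon to a pentagon, quadrangle or triangle, with the same half-plane description. So your plan can be completed, but the missing ingredient is the identification $\hat{A}=A$ of the side vertices; without it, your "explicit finite system of inequalities" has no derivation.
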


\begin{proof} Let $\TT_A$ be a good tropical triangle, for some matrix $A$.
By the paragraph after (\ref{eqn:normalizacion}), a translation allows us to assume that $A$ is normal. By lemmas \ref{lem:A=A2}  and \ref{lem:canonicalA=A2},
we can assume that  $A=L(d,\su d)$, for some $d,\su d\ge0$.
 By lemma \ref{lem:new}, $\TT_A$ is determined by the columns of the matrices $A$ and $-A^T$ and,  working in $Z=0$, we must look at the matrices $A_0$ and $(-A^T)_0$ shown above
in (\ref{eqn:LLLcero}) and (\ref{eqn:LLLTmenoscero}). Convexity immediately follows; see figure \ref{fig:normal_idempot_11}.
\end{proof}

\begin{lem}\label{lem:antennas}
Given  $d,\su d\in\R$,  $d\ge0$, $-d\le d_j$,
for $j=1,2,3$, set $A=L(d,\su d)$.
The following are equivalent:
\begin{enumerate}
\item $A\neq A^{\odot2}$,
\item $d_j<0$, for some $j=1,2,3$,
\item in $Z=0$, $\TT_A$ is not classically convex.
\end{enumerate}
\end{lem}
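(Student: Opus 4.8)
The plan is to treat the three equivalences by first pinning down (1)$\iff$(2) through a direct computation of $A^{\odot2}$, and then relating (2) and (3) geometrically. For the first equivalence I would invoke Lemma~\ref{lem:normal}(\ref{cond:primera}), which gives $A^{\odot2}=A\oplus\breve A$. Writing out $\breve A$ for $A=D(d,\su d)$ and comparing entrywise with $A$, one finds that $A$ and $A^{\odot2}$ agree in every position except possibly the three entries $(1,3),(2,1),(3,2)$: in all other positions the two candidates differ by a quantity bounded below by $2d>0$, so the $A$-entry wins. At the three exceptional positions the comparison reduces exactly to the sign of $d_2,d_3,d_1$ respectively (for instance $(A^{\odot2})_{13}=\max\{-2d-d_3,\,-2d-d_2-d_3\}$ strictly exceeds $a_{13}$ iff $d_2<0$). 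Hence $A\neq A^{\odot2}$ iff some $d_j<0$, which is (1)$\iff$(2).

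For (2)$\iff$(3), the direction ``not (2) $\Rightarrow$ not (3)'' is immediate: if all $d_j\ge0$ then $A=A^{\odot2}$ by the first equivalence, so $\TT_A$ is good by Lemma~\ref{lem:A=A2}, hence classically convex by Corollary~\ref{cor:convex}. The substantive direction is (2)$\Rightarrow$(3). First I would note that $d>0$ here (if $d=0$ the hypothesis $-d\le d_j$ forces $d_j\ge0$, contradicting (2)). The symmetric role of $d_1,d_2,d_3$ noted after (\ref{eqn:DDD}) is realized by conjugation with a cyclic permutation matrix, $P\odot D(d,d_1,d_2,d_3)\odot P^{\odot-1}=D(d,d_3,d_1,d_2)$; since such a conjugation acts on the chart $Z=0$ as an invertible affine map, it preserves classical convexity. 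This lets me assume, with no loss of generality, that the negative parameter is $d_2$.

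The geometric core is then to exhibit an \emph{antenna} at the vertex $a=\col(A_0,3)=(-2d-d_3,-d-d_3)$ and use it to break convexity. I would describe $\TT_A$ as the sweep $\bigcup_{s\in\span(b,c)}\span(a,s)$ from (\ref{eqn:barrido}), and compute, via the tropical Cramer rule, the bend point $-(a\otimes s)$ of each tropical segment $\span(a,s)$. Two facts must be established uniformly in $s\in\span(b,c)$: that $s_1>a_1$, and that $s_1-s_2\ge-d-d_2>-d=a_1-a_2$; both follow because $s_1-s_2$ and $s_1$ are affine on each of the two classical legs of $\span(b,c)$, so their minima occur at the three distinguished points $b,c,-(b\otimes c)$, where the inequalities are checked directly (using $d_2<0$ and $-d\le d_1,d_3$). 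Granting these, the first leg of every $\span(a,s)$ leaves $a$ horizontally eastward and stays on the line $\{y=-d-d_3\}$, while the bend point $-(a\otimes s)$ has $x$-coordinate at least $a_1+\delta$ with $\delta=\min\{d,-d_2\}>0$. Consequently $\TT_A\cap\{x<a_1+\delta\}\subseteq\{y=-d-d_3\}$: near $a$ the triangle is the one-dimensional antenna. Taking the witness $w=a+t(b-a)$ for small $t>0$ then gives a point of the classical segment $\overline{a,b}$ with $w_1<a_1+\delta$ but $w_2>-d-d_3$, so $w\notin\TT_A$ although $a,b\in\TT_A$; hence $\TT_A$ is not convex.

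The main obstacle I anticipate is precisely the uniform lower bound $\delta>0$ on how far east the bend points $-(a\otimes s)$ sit: without it the bend points could accumulate at $a$ and the slope--one second legs of the segments $\span(a,s)$ would fill a two--dimensional wedge at $a$, destroying the antenna picture. Pinning down $\min_{s}(s_1-s_2)=-d-d_2$ is therefore the crux, and it is the step that genuinely uses $d_2<0$ together with the constraints $-d\le d_1,d_3$; everything else is bookkeeping with the tropical cross product.
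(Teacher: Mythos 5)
Your proposal is correct and takes essentially the same route as the paper: (1)$\iff$(2) by entrywise comparison of $A$ with $A\oplus\breve A$ (exceptional positions $(1,3),(2,1),(3,2)$ governed by the signs of $d_2,d_3,d_1$), the direction not-(2)$\Rightarrow$not-(3) via Lemma~\ref{lem:A=A2} and Corollary~\ref{cor:convex}, and (2)$\Rightarrow$(3) by exhibiting an antenna that destroys convexity. The only difference is rigor: the paper merely names the antenna segment (for $j=1$, i.e.\ $d_1<0$) and points to a figure, whereas you reduce to $d_2<0$ by cyclic conjugation and then justify the one-dimensionality of $\TT_A$ near the antenna tip through the sweep (\ref{eqn:barrido}), the uniform eastward bound on the bend points $-(a\otimes s)$, and an explicit witness point on $\overline{a,b}$.
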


\begin{proof} To check for convexity in $Z=0$, consider  the matrix $A_0$ in expression (\ref{eqn:LLLcero}). Say, $j=1$. If  $d_1<0$, then  any point in the classical segment $\overline{p,q}$, with $p=[d+d_1, 2d+d_1+d_2,0]^T$ and $q=\col(A_0,2)$,  prevents  $\TT_A$ from being  convex; see figure \ref{fig:d_1_non_pos_13}.
\end{proof}

Denote by $a$ the \emph{classical directed segment from $p$ to $q$, including $q$ and excluding $p$}, as in the proof above.
We will say that $a$ is an \emph{antenna} of $\TT_A$. The integer length of $a$ is $-d_1$ and the direction of $a$   is north--east. For $d_2<0$ (resp. $d_3<0$) we would get an antenna pointing west (resp. south).

\begin{figure}[H]
 \centering
  \includegraphics[keepaspectratio,width=12cm]{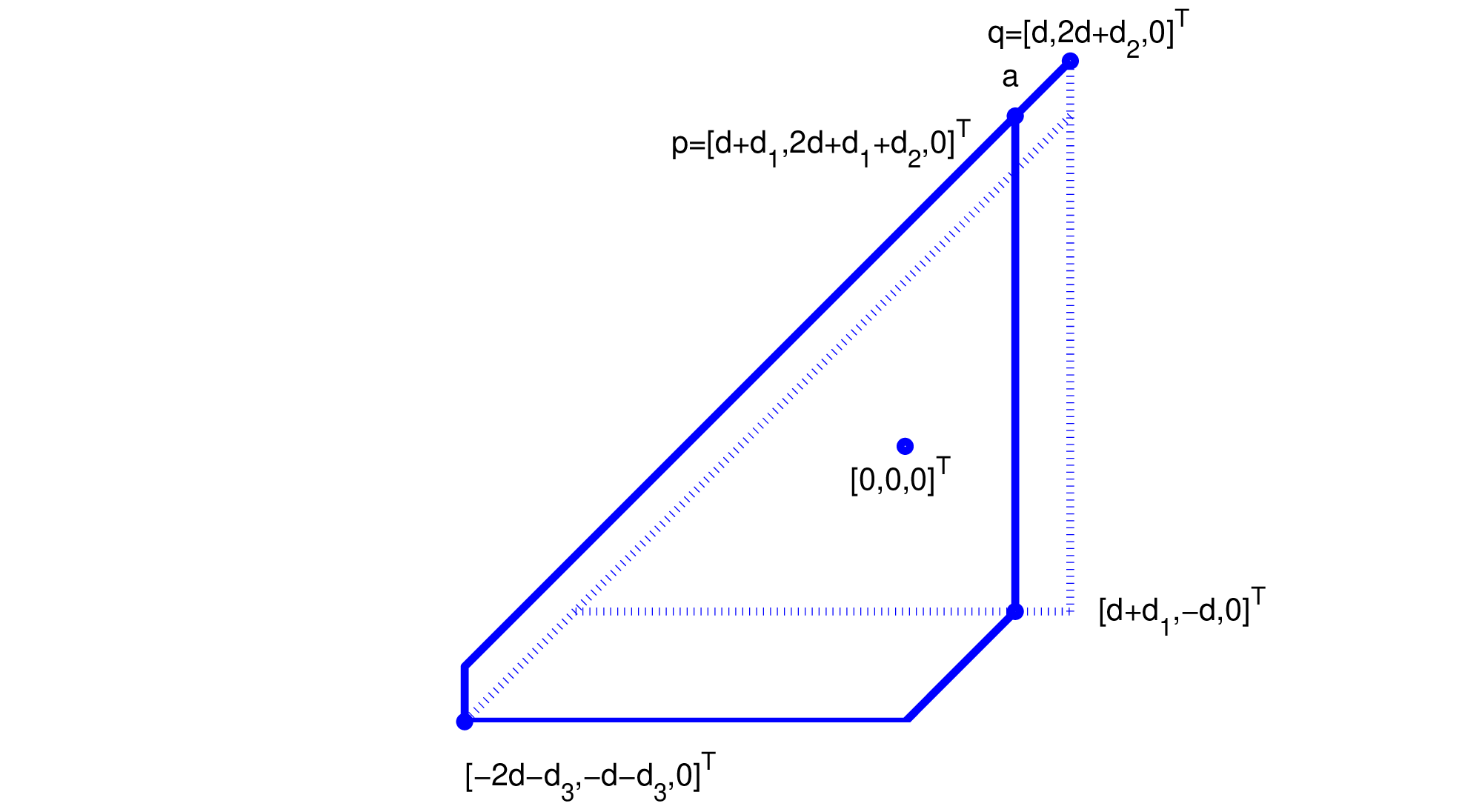}\\
  \caption{Tropical triangle with an antenna  $a$ due to  $d_1<0$.}
  \label{fig:d_1_non_pos_13}
  \end{figure}

\m In the hypothesis of the former lemma,  $\TT_A$  admits a \emph{cell decomposition}\label{dfn:cell_decom_triangle} having, at most, 13 cells (relatively open sets), and this is the generic case:
\begin{itemize}
\item one two--dimensional cell,
\item six one--dimensional cells,
\item six zero--dimensional cells.
\end{itemize}
Any one--dimensional cell disrupting the convexity of $\TT_A $ in $Z=0$ gives rise to an \emph{antenna},\label{dfn:antenna} as in \cite{Cohen}. Each antenna is the union of a one--dimensional  cell and a zero--dimensional  cell.  The union of points in the antennas of $\TT_A $ will be denoted $\ant(\TT_A )$. In  lemma \ref{lem:antennas}, we have shown that
each  $d_j<0$ yields   an antenna in $\TT_A$. The integer length  of this antenna is  $|d_j|$ and $|d_j|\le d$.

Notice that this cell decomposition is not the same as the one associated to $A$, as defined on p. \pageref{dfn:cell_decom} and \pageref{dfn:cell_decom2}.

\begin{cor}\label{cor:ant}
Given $d,\su d\in\R$, $d\ge0$, $-d\le d_j$, for $j=1,2,3$,
set $A=L(d,\su d)$. Then $\TT_{A^{\odot2}}=\TT_A\setminus \ant(\TT_A)$, in $Z=0$.
\end{cor}

\begin{proof} We know that $A$ is normal. If   $A=A^{\odot2}$, we just have corollary \ref{cor:convex}. Otherwise $A^{\odot2}=(A^{\odot2})^{\odot2}$, by lemma \ref{lem:normal}, so that  $\TT_{A^{\odot2}}$  is convex in $Z=0$, by lemma \ref{lem:A=A2} and corollary \ref{cor:convex}.
Now we compute
\begin{equation}\label{eqn:A2}
A^{\odot2}=\left[\begin{array}{ccc}
0&-d-d_2&-2d-d_3-d_2^-\\
-2d-d_1-d_3^-&0&-d-d_3\\
-d-d_1&-2d-d_2-d_1^-&0\\
\end{array}\right],
\end{equation}
\begin{equation}
\label{eqn:A2cero}
(A^{\odot2})_0=\left[\begin{array}{ccc}
d+d_1&d+d_1^-&-2d-d_3-d_2^-\\
-d-d_3^-&2d+d_1+d_2&-d-d_3\\
0&0&0\\ \end{array}\right].
\end{equation}
where $s^-=\min\{s,0\}$.
If, say, $d_1<0$, then 
$\col((A^{\odot2})_0,2)=[d+d_1,2d+d_1+d_2,0]^T$ and $\col(A_0,1)=[d+d_1,-d,0]^T$ so that both points lie on the classical  line $X=d+d_1$, meaning that the antenna in $\TT_A $ caused by the inequality $d_1<0$ no longer appears in $\TT _{A^{\odot2}}$.
\end{proof}

The former corollary tells us that \emph{squaring the normal matrix $A=L(d,\su d)$ corresponds to chopping off the antennas of $\TT_A$, if any}. The tropical triangle $\TT _{A^{\odot2}}$ will be called the \emph{soma} \label{dfn:soma} of $\TT_A $, denoted $\soma(\TT_A )$. Then \begin{equation}\label{eqn:union} \TT_A=\soma(\TT_A )\cup\ant(\TT_A ) \end{equation} is a disjoint union.


Notice that
$A^{\odot2}=0$ does not imply $A=0$, even if $A$ is normal. For example,
\begin{equation}\label{eqn:alpha}
A=\left[\begin{array}{ccc}
0&0&-\gamma\\-\alpha&0&0\\0&-\beta&0\\
\end{array}\right],
\end{equation} with $\alpha,\beta,\gamma\ge0$.

\m 

We know that  the antennas (if any) of $\TT_A$ have integer length $|d_j|\le d$,  when $A=L(d,\su d)$ with $d\ge0$ and  $-d\le d_j$. \label{asser:antennas_length}
But, there exist tropical triangles  with antennas of arbitrary length. Moreover, notice the way that antennas wrap around the two dimensional part of a triangle $\TT_A$,  for $A=L(d,\su d)$ with $d>0$, $0<|d_j|<d$, $j=1,2$ and compare with  the essentially different way that antennas wrap around the two dimensional part of the triangle $\TT_B$ (see figure \ref{fig:erratum_1}), for
\begin{equation}\label{eqn:B}
B=\left[\begin{array}{rrr}
0&-5&0\\-7&0&0\\-6&-1&0
\end{array}\right],\qquad  B_0=\left[\begin{array}{rrr}
6&-4&0\\-1&1&0\\0&0&0
\end{array}\right].
\end{equation}
For these two reasons,  in order to find a canonical normalization for the matrices describing these triangles, we must consider   matrices more general than $L(d,\su d)$.

Consider
\begin{equation}
F(d,\su d,\su h,g)=\FH,\label{eqn:FH}
\end{equation}
with $d,\su d,\su h,g\ge0$ such that
\begin{description}
\item [condition 1:] $h_{j+1}>0$ implies $d_j=0$,   \label{rem:1}
\item [condition 2:] $g>0$ implies $d=d_1=h_3=0$,   \label{rem:2}
\end{description}
where all subscripts work modulo 3. Notice how the positivity of some of the parameters inhibit the positivity of other parameters.
 For pictures in $Z=0$, we will use
\begin{equation}
F(d,\su d,\su h,g)_0=\FHcero,\label{eqn:FHcero}
\end{equation}
\begin{equation}
(-F(d,\su d,\su h,g)^T)_0=\FHTmenoscero.\label{eqn:FHTmenoscero}
\end{equation}

Notice that
\begin{itemize}
\item $F(d,\su d,0,0,0,0)=L(d,\su d)$.

\item $F=F(d,\su d,\su h,g)$ is  normal and $F^{\odot2}=L(d,\su d)$.

\end{itemize}

\begin{thm}[Lower canonical normalization]\label{thm:canonical_nf}
If $A$ is any $3\times 3$  real matrix, then there exist unique $d,\su d,\su h,g\ge0$ 
and there exist permutation matrices $P,Q$ such that
$F(d,\su d,\su h,g)=P\odot A\odot Q$, where all subscripts work modulo 3.
\end{thm}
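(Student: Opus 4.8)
The plan is to prove existence first by reducing an arbitrary matrix to the canonical form $E(d,\su d,\su h)$ in stages, and then prove uniqueness via the geometric invariants attached to the triangle $\TT_A$. For existence, I would start from the Hungarian method (cited after (\ref{eqn:normalizacion})), which produces permutation matrices $P_0,Q_0$ so that $N=P_0\odot A\odot Q_0$ is normal. By the geometric meaning of normality (\ref{eqn:col_0}), the three columns of $N_0$ fall into the closures of the three corners $\overline{C^0_j}$; a permutation of columns realized by a further permutation matrix lets us enforce the cardinal points condition (p.~\pageref{cond:puntos_card}), i.e.\ that the columns are ordered east, north, south--west. A translation (right multiplication by a diagonal matrix, which does not move the points in $\T\P^2$) then centers the figure so that $\col(N_0,3)=0$ and symmetrizes the remaining free parameters exactly as in the proof of lemma \ref{lem:canonicalA=A2}, where $d=\tfrac13|t_3-t_4|$ and $d_3=\min\{t_3,t_4\}$ were extracted.

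\medskip

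The genuinely new feature compared to lemma \ref{lem:canonicalA=A2} is the presence of \emph{antennas}: the matrix $N$ need not be idempotent, so $\TT_N$ may fail to be convex. By corollary \ref{cor:biggest}, the soma $\soma(\TT_N)=\TT_{N^{\odot2}}$ is the maximal convex subset, and the three possible antennas (one for each $j$ with the relevant slack inequality in (\ref{eqn:good}) failing) are line segments of integer length at most $d$ sticking out of the soma. My plan is to read off the idempotent part $N^{\odot 2}$ using lemma \ref{lem:normal}(\ref{cond:primera}) to determine $d$ and the $d_j$, and then to encode the three antenna lengths in the parameters $h_1,h_2,h_3$. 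Comparing the normal form (\ref{eqn:EH}) with $D(d,\su d)$ in (\ref{eqn:DDD}), the entries carrying $h_j$ are precisely the off--diagonal positions $(2,1),(3,2),(1,3)$ modulo relabeling, which are exactly the entries that change under squaring in the proof of corollary \ref{cor:biggest}; so the $h_j$ measure how far $N$ is from $N^{\odot2}$, namely the integer lengths of the antennas. Setting each $h_j$ equal to the corresponding antenna length and defining the remaining $d_i$ from the soma gives a matrix of the form $E(d,\su d,\su h)$ equal to $P\odot A\odot Q$ for suitable $P,Q$.

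\medskip

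For the extra constraint that $h_{j+1}>0$ implies $d_j=0$: geometrically, an antenna of positive length in a given direction forces the corresponding soma side to degenerate, so the two parameters $h_{j+1}$ and $d_j$ cannot both be positive. I would verify this by direct inspection of the inequalities defining when position $j$ produces an antenna (the remark just before the theorem, $h_{j+1}-d_j>0$, combined with the normality constraints $d_j\ge -d$ and the requirement $d_j\ge 0$ in the canonical range), showing that the only way both to have a genuine antenna and to keep the figure in canonical centered position is to absorb any positive $d_j$ into the antenna, i.e.\ set $d_j=0$. This normalization choice is what makes the decomposition (\ref{eqn:union}) of $\TT_A$ into soma and antennas unambiguous.

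\medskip

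For \textbf{uniqueness}, the strategy is to show that every parameter is a geometric invariant of the pair $(\TT_A,\TT^A)$, hence independent of the chosen $P,Q$. The permutations $P,Q$ are pinned down by the cardinal points condition: requiring the columns of $E_0$ to represent the east/north/south--west points in order forces the column permutation $Q$, and requiring $\diag=0$ with $I\le E\le 0$ forces the row permutation $P$. Once $P,Q$ are fixed, the number $d$ is the common integer length of the antennas (assertion on p.~\pageref{asser:antennas_length}) together with the centering requirement; the $d_j$ are recovered from the soma $\TT_{A^{\odot2}}$, which by corollary \ref{cor:biggest} is intrinsically the maximal convex subset and whose shape parameters were shown unique in lemma \ref{lem:canonicalA=A2}; and the $h_j$ are recovered as the integer lengths of the three antennas of $\TT_A$, an intrinsic geometric quantity. \textbf{The main obstacle} I anticipate is precisely the bookkeeping of uniqueness in degenerate and boundary cases: when some antenna has length $0$, or when $d=0$, several of the parameters collapse and one must check that the tie--breaking conventions (the clause $h_{j+1}>0\Rightarrow d_j=0$, and the symmetric $\tfrac13|t_3-t_4|$ centering) still single out one representative, so that existence and uniqueness survive the non--generic configurations rather than only the generic one.
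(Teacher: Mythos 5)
Your proposal follows the paper's own proof essentially step for step: reduce to a normal matrix via the Hungarian method, apply lemma \ref{lem:canonicalA=A2} to the idempotent square $N^{\odot 2}$ to fix $d,\su d$, encode the antenna lengths in $h_1,h_2,h_3$ at the three positions that change under squaring (with the constraint $h_{j+1}>0\Rightarrow d_j=0$ arising because antennas can only attach where the soma degenerates), and deduce uniqueness from the geometric meaning of the parameters. The paper's version is terser --- it merely verifies that $E(d,\su d,\su h)^{\odot2}=D(d,\su d)$ and appeals to geometry for uniqueness --- so your write--up is a more detailed rendering of the same argument, including the same reliance on geometric invariance for uniqueness.
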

\begin{proof}
To prove existence, we may assume that $A$ is normal. If $A=A^{\odot2}$, we take $h_j=g=0$,  for all $j$, and apply  lemma \ref{lem:canonicalA=A2}.
Now  assume that $A\neq A^{\odot2}$.   By the same lemma, we can assume that $A^{\odot2}=L(d,\su d)$, for some $d,\su d\ge0$.
In other words, $A$ is a \emph{tropical square root of $L(d,\su d)$}.
The geometric  meaning of this assertion is  that \emph{$\TT_A$ is obtained from $\TT_{A^{\odot2}}$ by pasting of antennas}. Each antenna of
$\TT_A$ emanate from a vertex of $\TT_{A^{\odot2}}$, i.e., from the point represented by a column of $A^{\odot2}$. At each vertex of $\TT_{A^{\odot2}}$,
an antenna can be glued, at most.

Write
\begin{equation}
p=\col(A^{\odot2},1), \quad q=\col(A^{\odot2},2), \quad r=\col(A^{\odot2},3),
\end{equation}
\begin{equation}
p'=\col(A,1), \quad q'=\col(A,2), \quad r'=\col(A,3).
\end{equation}

\begin{description}
\item [Claim.] For $\TT_{A^{\odot2}}$ to admit  gluing of antennas, it is necessary that the points $p$, $q$, $r$, $-(p\otimes q)$,
$-(q\otimes r)$ and $-(r\otimes p)$ are NOT all different. In particular, $d_j=0$, for some $j$.
\end{description}
Indeed, suppose a small antenna $a$ emanates from $r$; this means that $r'$ is perturbation of $r$. Let us work in $Z=0$. Recalling the rays of an affine
tropical line,
this antenna can point  either west or south. Suppose that $a$ points west. Then
$r'=[-2d-d_3-\epsilon,-d-d_3,0]^T$ and  $\epsilon>0$ is the integer length of $a$.  Consider the slope--one classical line through
$q=[q_1,q_2,0]^T$, with equation
$Y-X=q_2-q_1$, and the slope--zero line through $r=[r_1,r_2,0]^T$, with equation
$Y=r_2$. These lines meet at the point $i=[r_2-q_2+q_1,r_2,0]^T$. Now $r_2-q_2+q_1\le r_1$, since $\TT_{A^{\odot2}}$ is a good triangle (the former
inequality
is just another way of writing
$a_{12}-a_{22}\le a_{13}-a_{23}$ in (\ref{eqn:6})). For  the antenna $a$ to exist, $r'$ must be west of $i$ and for $a$ to emanate from $r$,
it must
happen that $i=r$, or equivalently, $r=-(q\otimes r)$. A simple computation shows that $-(q\otimes r)=[-2d-d_3,-d-d_3+d_2,0]^T$, so that
$d_2=0$; see figure \ref{fig:erratum_2}.

\begin{figure}[H]
 \centering
  \includegraphics[keepaspectratio,width=12cm]{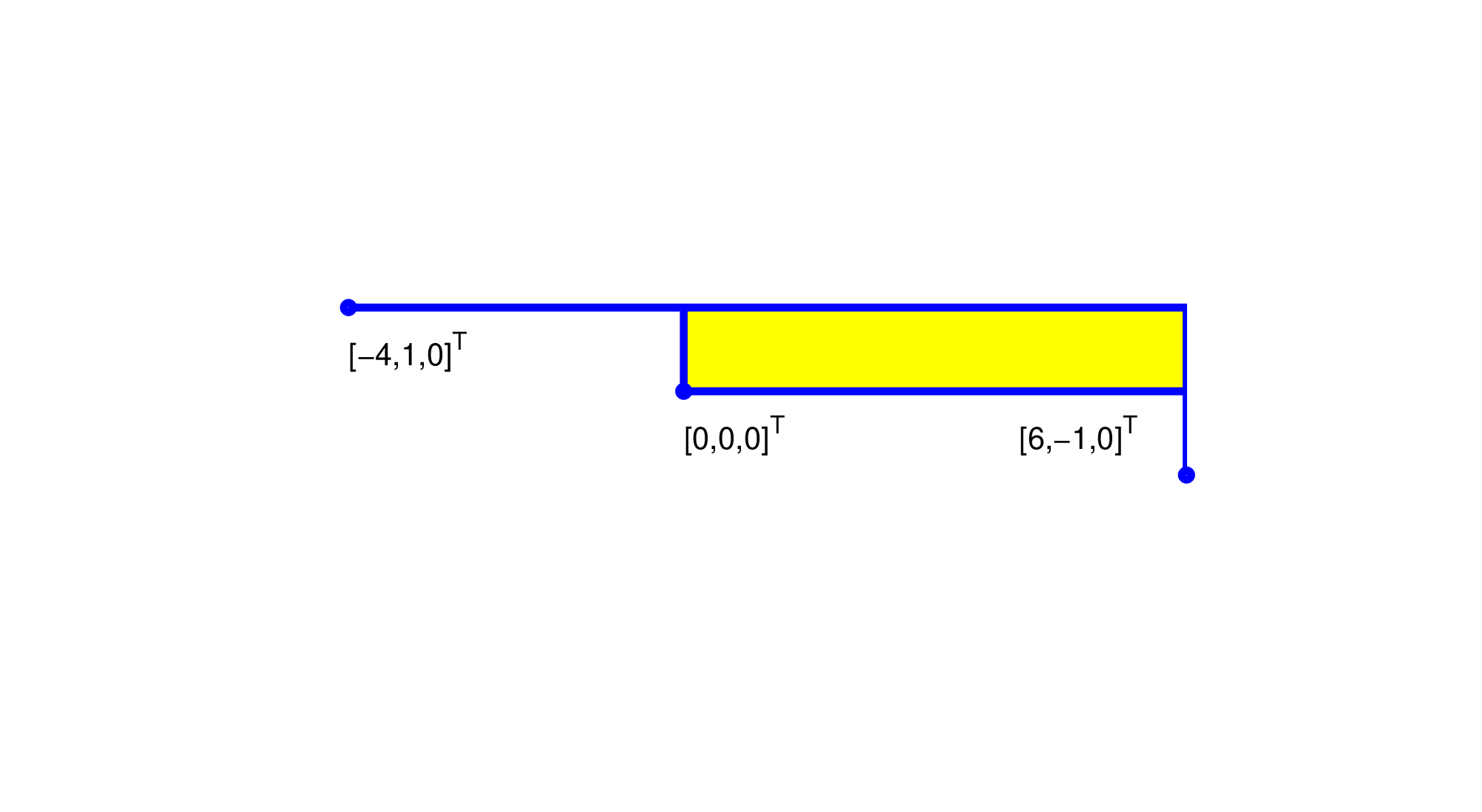}\\
  \caption{The tropical triangle corresponding to matrix $B$ in (\ref{eqn:B}).}
  \label{fig:erratum_1}
  \includegraphics[keepaspectratio,width=12cm]{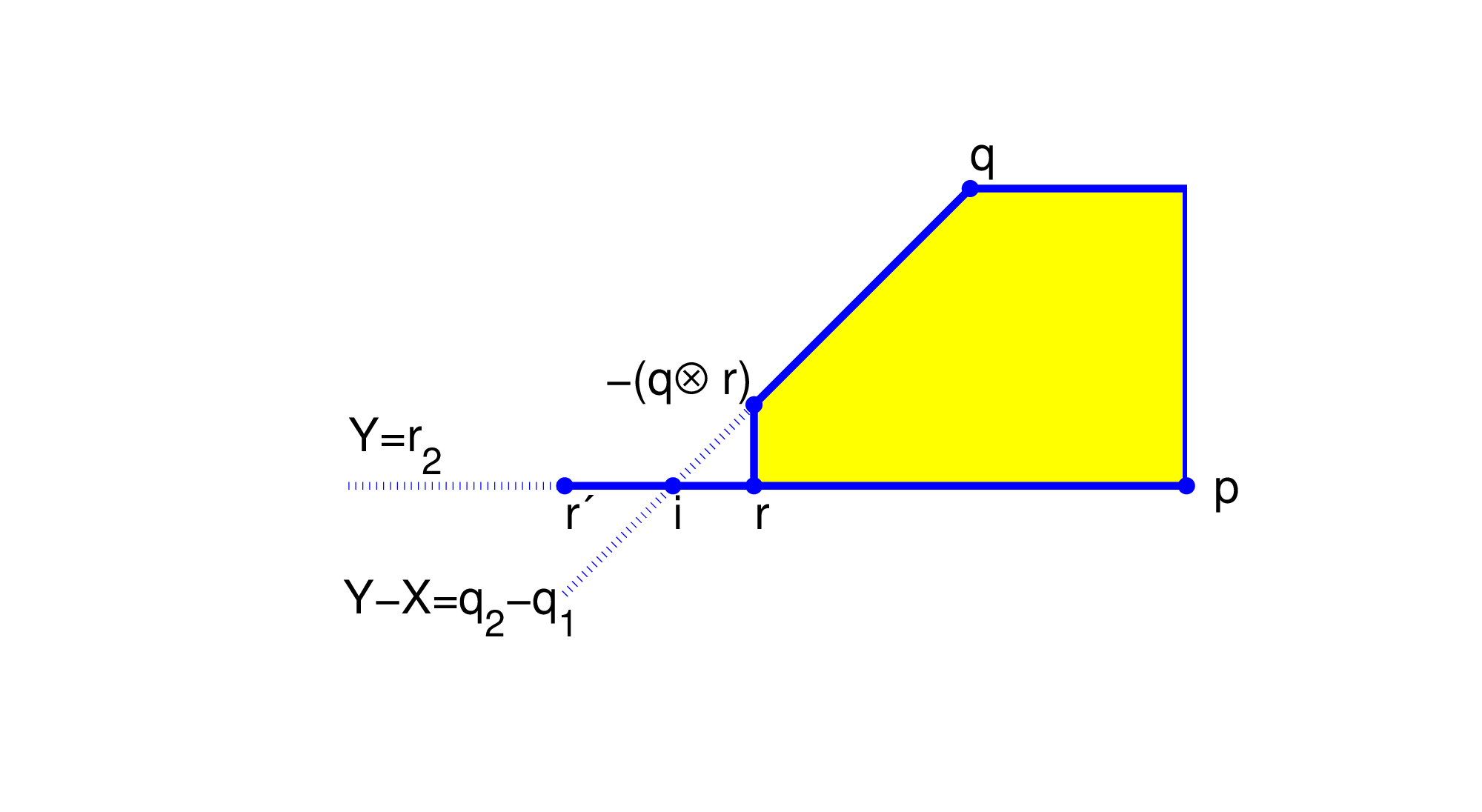}\\
  \caption{The three points $i,r$ and $-(q\otimes r)$ will collapse together.}
  \label{fig:erratum_2}
  \end{figure}

Suppose now that $a$ points south.
 Then $r'=[-2d-d_3,-d-d_3-\epsilon,0]^T$ and  $\epsilon>0$ is the integer length of $a$.  In this case the points $r$ and $-(r\otimes p)$
coincide.
A simple computation shows that
$-(r\otimes p)=[d+d_1-d_3,-d-d_3,0]^T$. It follows that $d_1=-3d$, whence $d=d_1=0$. In particular, $p=[0,0,0]^T$.

The proof of the claim is similar, when $a$  emanates from $p$ or $q$.

\begin{itemize}
\item Assume $A^{\odot2}=0$, i.e.,  $d=d_1=d_2=d_3=0$. Then, all the columns  of $A^{\odot2}$ represent the same point (the origin) and $\TT_A$ is the
union of the
origin and, at most, three antennas, of integer lengths $\su h\ge0$.
Take then $g=0$ to get $A=F(0,0,0,0,\su h,0)$. 

\item Assume now that the columns of $A^{\odot2}$ represent three different points.
Then, either $d\neq0$ or $d=0$ and $d_j\neq0\neq d_{j+1}$, for some $j$.

We consider here matrices a bit more general than $F(d,\su d,\su h,g)$.
Write $G(d,\su d,\su h,\su g)=$
\begin{equation}
\GH
\end{equation}
such that  $d,\su d,\su h,\su g\ge0$ and
\begin{enumerate}
\item $h_{j+1}>0$ implies $d_j=g_{j+1}=0$,   \label{rem:11}
\item $g_j>0$ implies $d=d_{j+1}=h_j=0$,   \label{rem:21}
\end{enumerate}
where all subscripts work modulo 3. A simple computation shows that $L(d,\su d)$  equals $G(d,\su d,\su h,\su g)^{\odot2}$.

Suppose that $A=G(d,\su d,\su h,\su g)$.
\begin{enumerate}

\item Say, $\TT_A$ has an antenna  $a$  of integer length $\epsilon$ emanating from the point represented by a column of $A^{\odot2}$. Say this column is $r$.
\begin{enumerate}
\item If $a$ points south,
 then   $d=d_1=0$ and $p=[0,0,0]^T$. Take $g_3=\epsilon$ and $h_3=0$.

\item If $a$ points west, then $d_2=0$. Take $g_3=0$ and $h_3=\epsilon$.
\end{enumerate}

\item Say, $\TT_A$ has two antennas  $a$ and $b$, and $a$ is emanating from  $r$. Looking at the previous item, we see that only
three cases are possible:
\begin{enumerate}
\item  If $g_3>0$, $d=d_1=h_3=0$ and $h_2>0$, $d_1=g_2=0$,  so that $h_2$ is the integer length  of $b$. The antenna $a$  points south and $b$ points north--east from $q$; see figure \ref{fig:erratum_3}, left,
\item If $h_3>0$, $d_2=g_3=0$ and $h_2>0$, $d_1=g_2=0$,  so that $h_2$ is the integer length  of $b$.   The antenna $a$  points west and $b$  points north--east from $q$; see figure \ref{fig:erratum_3}, center,
\item If $h_3>0$, $d_2=g_3=0$ and $h_1>0$, $d_3=g_1=0$,  so that $h_1$ is the integer length  of $b$. The antenna $a$  points west and $b$  points south from $p$; see figure \ref{fig:erratum_3}, right.
\end{enumerate}
\begin{figure}[H]
 \centering
  \includegraphics[keepaspectratio,width=13cm]{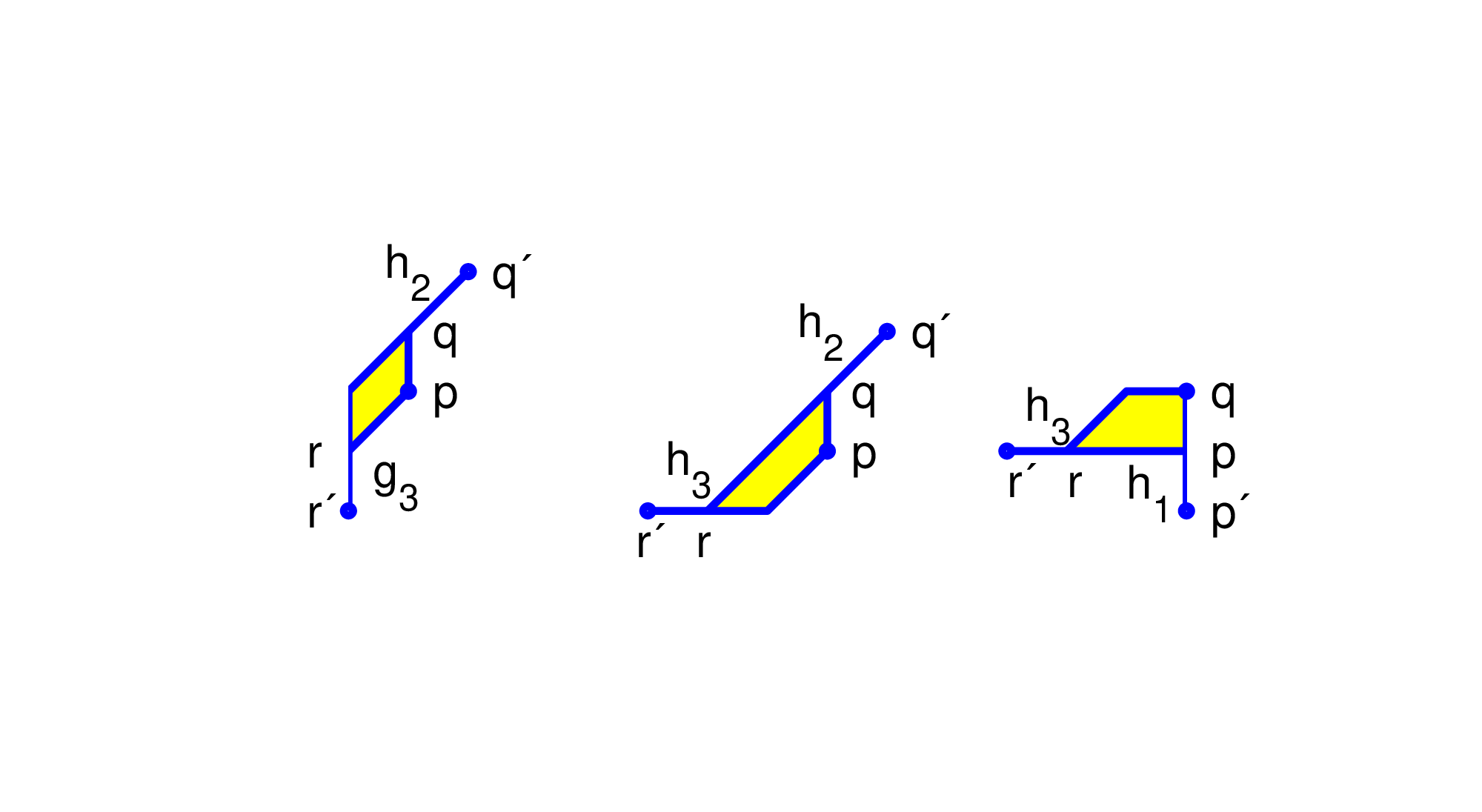}\\
  \caption{All tropical triangles with two antennas (up to change of coordinates).}
  \label{fig:erratum_3}
  \includegraphics[keepaspectratio,width=13cm]{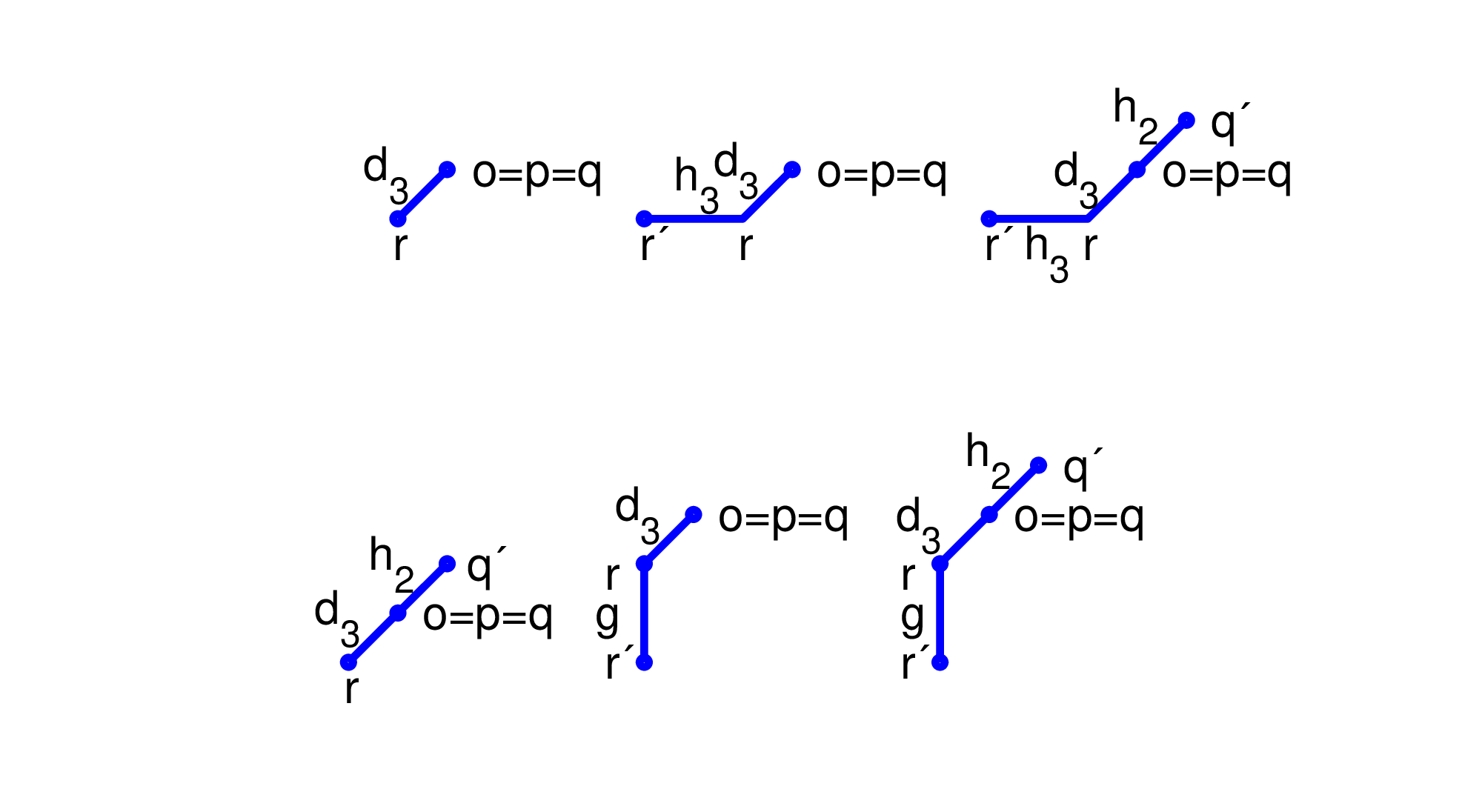}\\
  \caption{All tropical triangles having 1--dimensional somas (up to change of coordinates).}
  \label{fig:erratum_4}
  \end{figure}

If we had $g_3>0$ and $g_2>0$, then $d=d_1=d_3=0$, contradicting  that the columns of $A^{\odot2}$
represent three different points. 
Similarly, if $g_3>0$ and $g_1>0$, or if  $g_2>0$ and $g_1>0$.

\item Say, $\TT_A$ has three antennas. Then, by the previous item, the only possibility is $\su h>0$, so that $d_j=g_{j+1}=0$, for all $j$.
\end{enumerate}

We have just proved that $g_j>0$ is possible for only one value of  $j=1,2,3$. Then, a change of coordinates allows us to assume $j=3$, and write $g_3=g$,  so that
 $G(d,\su d,\su h,0,0,g)=F(d,\su d,\su h, g)$.

\item Assume now that the columns of $A^{\odot2}$ represent two different points. This case can be viewed as a degeneration of
the previous cases, as $d>0$ tends to zero.  Say $d=d_1=d_2=0$ and $d_3\neq0$. Then   $p=q$ is the origin and $h_1=0$. Here
$\soma(\TT_A)$  reduces to a classical segment \label{rem:segment}; see figure \ref{fig:erratum_4}.
\end{itemize}

So far, we have proved that given any $3\times 3$ matrix $A$, there exist permutation matrices $P,Q$ such that
$$F(d,\su d,\su h, g)^{\odot2}=L(d,\su d)=P\odot A^{\odot2}\odot Q.$$ It turns out that the same matrices $P,Q$ provide
$$F(d,\su d,\su h, g)=P\odot A\odot Q.$$
The uniqueness of $d,\su d,\su h,g$ follows from the geometric meaning of these parameters.
\end{proof}

\begin{cor}\label{cor:sqrt}
 $F(d,\su d,\su h,g)$ is a
tropical square root of $L(d,\su d)$.\qed
\end{cor}

\begin{ex}\label{ex:cohen1}
Let us compute the lower canonical normalization of   $$A=A_0=\left[
\begin{array}{ccc}
0&1&3\\
0&3&4\\
0&0&0\\
\end{array}\right],$$ the example in \cite{Cohen}, p. 409. Consider the  matrices $U =\diag (-1,0,0)$,
$$Q=\left[\begin{array}{ccc}
-\infty&-\infty&0\\
0&-\infty&-\infty\\
-\infty&0&-\infty
\end{array}\right]$$ and obtain
$$N_0=U\odot P_{12}\odot A\odot Q=\left[\begin{array}{ccc}
2&3&-1\\
1&3&0\\
0&0&0
\end{array}\right].$$ Then
$$N=N_0\odot \diag (-2,-3,0)=\left[\begin{array}{ccc}
0&0&-1\\
-1&0&0\\
-2&-3&0
\end{array}\right]$$ is normal.
 Then
$$N^{\odot2}=\left[\begin{array}{ccc}0&0&0\\-1&0&0\\-2&-2&0
\end{array}\right],\qquad (N^{\odot2})_0=\left[\begin{array}{ccc}
2&2&0\\
1&2&0\\
0&0&0
\end{array}\right].$$
In figure \ref{fig:ejem_cohen_15}, we find, from left to right, the tropical triangles corresponding to the matrices $A, P_{12}\odot A, U\odot P_{12}\odot A$, $U\odot P_{12}\odot A\odot Q$  and $N$ (the last three matrices yield the same triangle),  all in $Z=0$.
Here $3d=1$, $d_1=d_2=0$ and $d_3=1$ so that
the lower canonical normalization of $N^{\odot2}$
is
$$L\left(1/3,0,0,1\right)=
\left[\begin{array}{ccc}
0&-1/3&-5/3\\
-2/3&0&-4/3\\
-1/3&-2/3&0
\end{array}\right]=S^{\odot -1}\odot N^{\odot2}\odot S,$$  with $S=\diag(5/3,4/3,0)$.
Thus, the lower canonical normalization of $A$ is
$$S^{\odot -1}\odot N\odot S=F\left(1/3,0,0,1,0,1,1,0\right)=\left[\begin{array}{ccc}
0&-1/3&-8/3\\
-2/3&0&-4/3\\
-1/3&-5/3&0
\end{array}\right].$$
We have $h_1=0$, $h_2=h_3=1$.
For pictures in $Z=0$, we consider
$$L\left(1/3,0,0,1\right)_0=\left[\begin{array}{ccc}
1/3&1/3&-5/3\\
-1/3&2/3&-4/3\\
0&0&0
\end{array}\right]$$ and
$$F\left(1/3,0,0,1,0,1,1,0\right)_0=\left[\begin{array}{ccc}
1/3&4/3&-8/3\\
-1/3&5/3&-4/3\\
0&0&0
\end{array}\right].$$ In figure \ref{fig:ejem_cohen_16} we see the triangles corresponding to the matrix $N^{\odot2}$ and its lower canonical normalization, while in figure \ref{fig:ejem_cohen_17} we see the triangles of the matrix $N$ and its lower canonical normalization. Notice that  the matrices $S^{\odot-1}$ and $S$ provide the lower canonical normalization of $N^{\odot2}$ and also of $N$. 
\end{ex}

\begin{figure}[H]
 \centering
  \includegraphics[keepaspectratio,width=12cm]{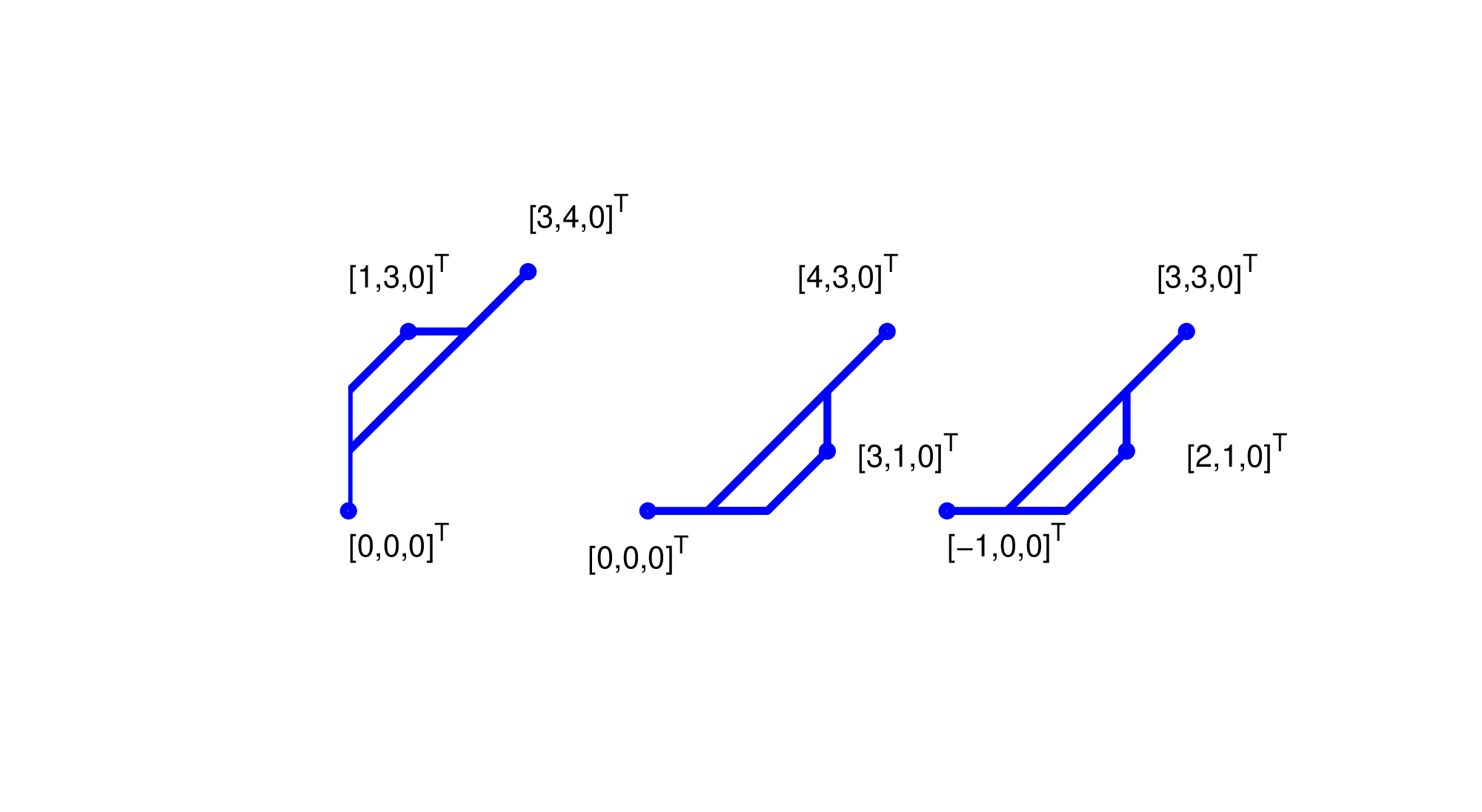}\\
  \caption{Tropical triangles $\TT_A$, $\TT_{P_{12}\odot A}$ and $\TT_N$.}
  \label{fig:ejem_cohen_15}
  \end{figure}

\begin{figure}[H]
 \centering
  \includegraphics[keepaspectratio,width=10cm]{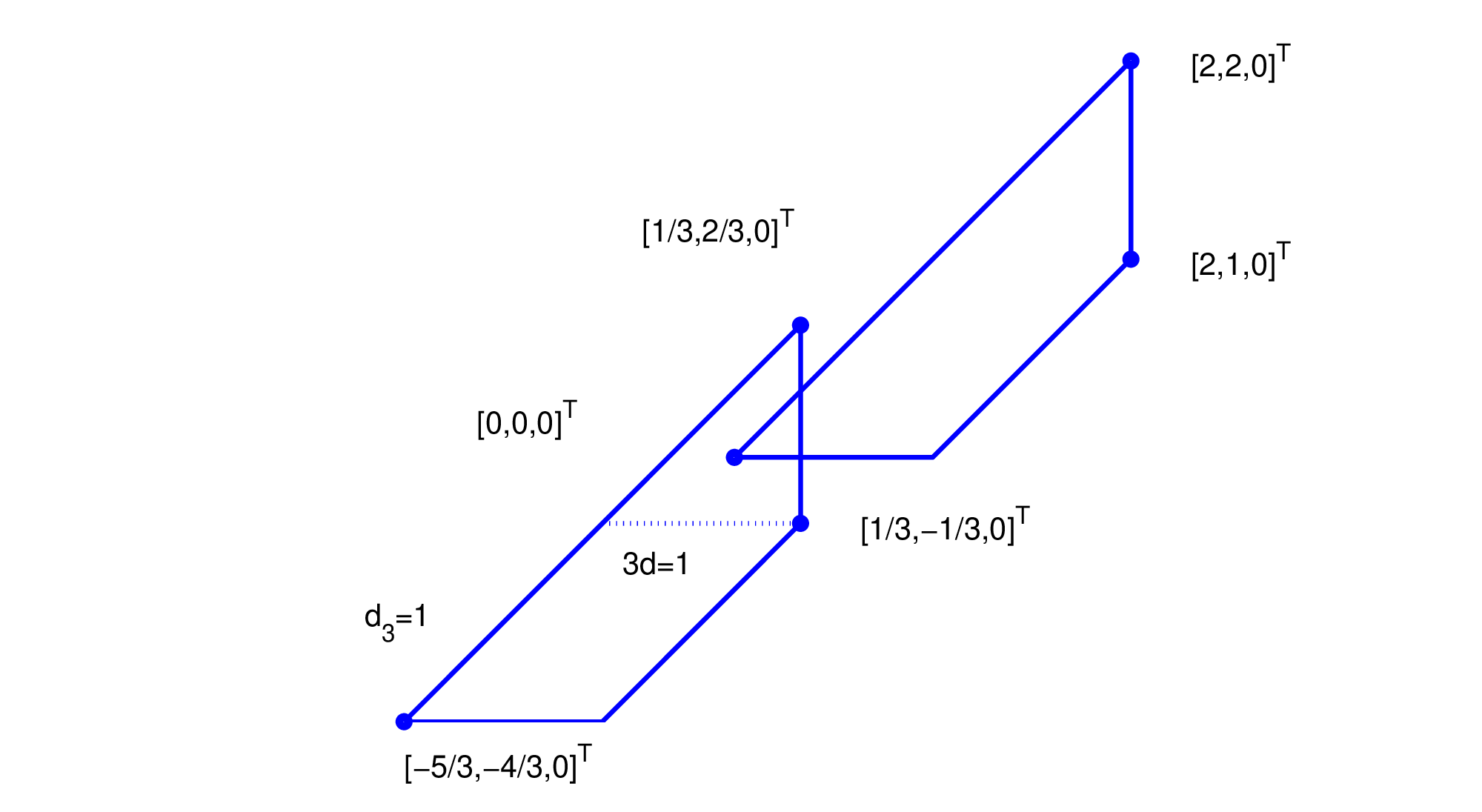}\\
  \caption{Tropical triangle corresponding to $N^{\odot2}$ and to its lower canonical normalization.}
  \label{fig:ejem_cohen_16}
  \end{figure}

\begin{figure}[H]
 \centering
  \includegraphics[keepaspectratio,width=10cm]{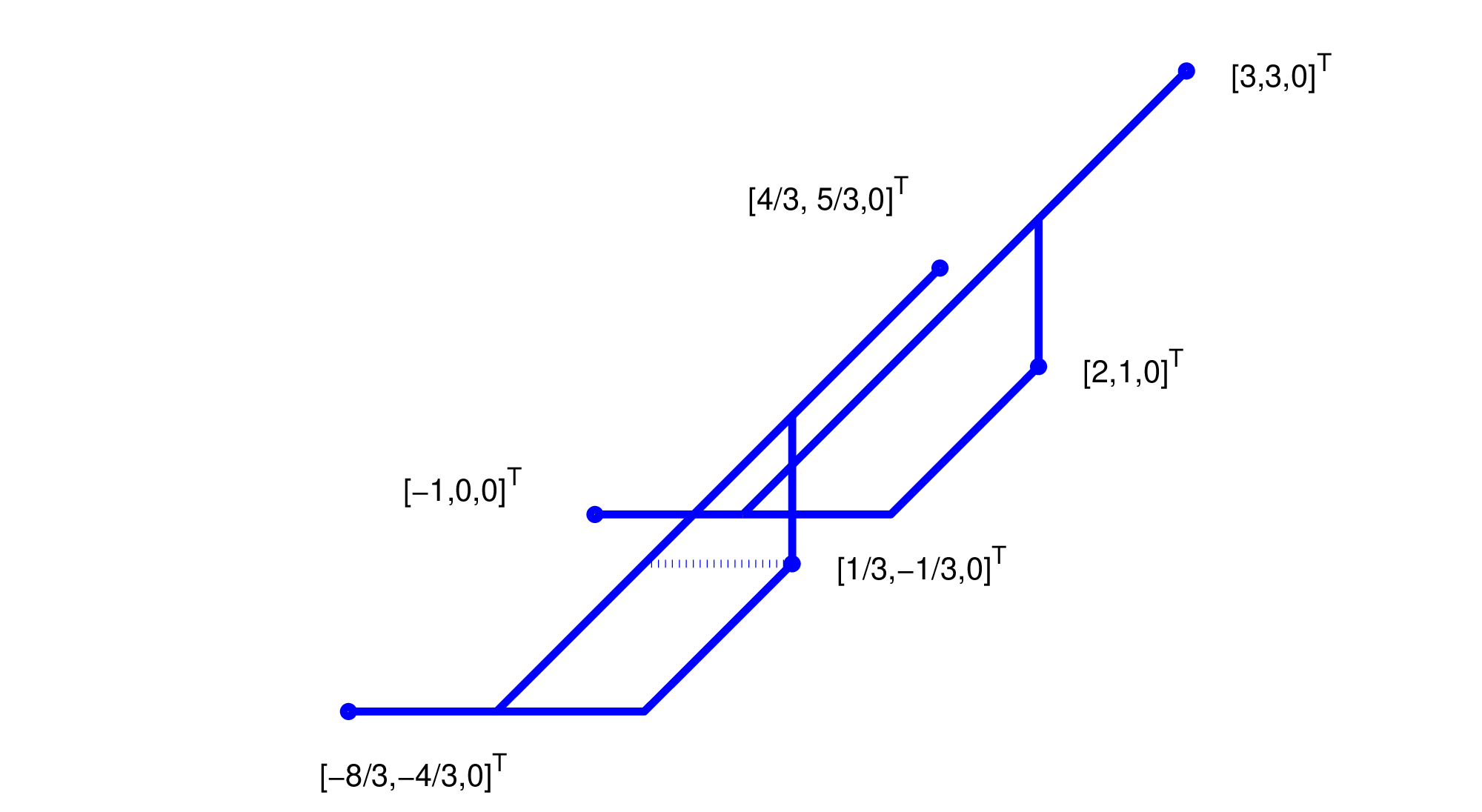}\\
  \caption{Tropical triangle corresponding to $N$ and to its lower canonical normalization.}
  \label{fig:ejem_cohen_17}
  \end{figure}

\m Consider  $A=F(d,\su d,\su h,g)$. Now, a definition of \emph{soma and antennas of $\TT_A
$} can be given, as in  p. \pageref{dfn:antenna}. The soma
of $\TT_A$ is $\TT_{L(d,\su d)}$.   The antennas of $\TT_A
$  have  tropical length $h_j$, if $h_j>0$ or $g$, if $g>0$.

\m For any  $3\times 3$  matrix  $A$ let   $N=F(d,\su d, \su h,g)$ be the lower canonical normalization of $A$. There exist permutation matrices $P,Q$ such that $N=P\odot A\odot Q.$ The map $f_P$ is a translation, so that
the  triangle $\TT_A=f_{P^{\odot-1}}(\TT_N)$ is just a translated of $\TT_N$.
Then we define the \emph{soma and antennas of $\TT_A$} as  follows:
\begin{equation}
\soma(\TT_A)=f_{P^{\odot-1}}(\soma \TT_N),\qquad \ant(\TT_A)=f_{P^{\odot-1}}(\ant \TT_N),\label{eqn:soma_antennas}
\end{equation}
so that the decomposition (\ref{eqn:union}) holds true, for any $A$.

%
%

\m The following theorem is a  simple  geometric characterization of normality.

\begin{thm}\label{thm:normalZ=0}
The  $3\times 3$ matrix  $A$ is normal if and only if
 the origin belongs to $\soma(\TT_A)$, in $Z=0$.
\end{thm}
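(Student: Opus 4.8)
The plan is to read both implications through the identity $\soma(\TT_A)=\TT_{A^{\odot2}}$, which holds for normal $A$, together with the observation that the origin is the canonical fixed point of $f_{A^{\odot2}}$. First I would treat the implication ``$A$ normal $\Rightarrow$ the origin lies in $\soma(\TT_A)$''. By lemma \ref{lem:normal}, if $A$ is normal then $A^{\odot2}$ is again normal and idempotent, so $\TT_{A^{\odot2}}$ is a good tropical triangle, convex in $Z=0$ by corollary \ref{cor:convex}. Since $A^{\odot2}$ is normal its diagonal entries are $0$ and all other entries are $\le0$; hence in each row the maximum defining $A^{\odot2}\odot[0,0,0]^T$ is attained at the diagonal, giving $A^{\odot2}\odot[0,0,0]^T=[0,0,0]^T$. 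Thus the origin is a fixed point of $f_{A^{\odot2}}$ and, by (\ref{eqn:im}), lies in $\im f_{A^{\odot2}}=\TT_{A^{\odot2}}$. It remains to identify this triangle with the soma: because $A$ is normal, its canonical normalization is realized by a \emph{conjugation} $N=Q^{\odot-1}\odot A\odot Q$ (this is exactly the situation of lemma \ref{lem:canonicalA=A2}, and of the proof of theorem \ref{thm:canonical_nf}, where one first conjugates $A^{\odot2}$ to $D(d,\su d)$). Consequently $f_{P^{\odot-1}}=f_Q$ carries $\TT_{N^{\odot2}}$ exactly onto $\TT_{A^{\odot2}}$, so $\soma(\TT_A)=\TT_{A^{\odot2}}$, and the origin belongs to the soma.

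For the converse I would argue through the geometric meaning of normality (\ref{eqn:col_0}): a matrix with $\row(A_0,3)=0$ is normal exactly when $\col(A_0,j)\in\overline{C^0_j}$ for $j=1,2,3$, i.e.\ when the three columns sit in the east, north and south--west corners around the origin. Passing to the canonical form $E(d,\su d,\su h)=P\odot A\odot Q$, the soma is the convex triangle $\TT_{D(d,\su d)}$, whose vertices are the explicit columns of $D(d,\su d)_0$ in (\ref{eqn:DDDcero}) and of $(-D(d,\su d)^T)_0$ in (\ref{eqn:DDDTmenoscero}). I would then compute, directly from these coordinates, the inequalities on $d,\su d$ (equivalently, on the entries of $A$) that are forced by the requirement that the origin lie in $\TT_{D(d,\su d)}$, and verify that they are precisely the conditions $I\le A\le0$ of (\ref{eqn:normal}), read back through the recovery formula (\ref{eqn:A}) and lemma \ref{lem:A=A2}.

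The main obstacle is this converse direction. The delicate point is that $\soma$ is defined only up to the change of coordinates $f_{P^{\odot-1}}$ attached to the canonical normalization, so the hypothesis ``the origin lies in $\soma(\TT_A)$'' must be shown to pin the triangle in a single position. Concretely, one must check that a \emph{non}--conjugating normalization translates the centered soma $\TT_{D(d,\su d)}$ away from the origin, and that the normalizations fixing the origin are exactly the conjugations under which $A$ stays normal. I expect the cleanest route is to carry out the explicit corner (cardinal--points) computation above in the $D(d,\su d)$ coordinates, where the three extreme columns straddle the origin precisely under condition (\ref{eqn:col_0}); the forward direction and lemma \ref{lem:A=A2} then close the remaining equivalences.
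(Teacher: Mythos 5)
Your forward implication is correct, and it takes a genuinely different route from the paper's. The paper argues geometrically: normality places the columns of $A_0$ in the three corners $\overline{C^0_j}$ by (\ref{eqn:col_0}); the tropical segments joining these columns pairwise must cross the line $L_0$, and perturbing the crossing points produces a classically $2$--dimensional sub--triangle of $\TT_A$ containing the origin, so the origin cannot lie on an antenna. Your argument --- the origin is fixed by $f_{A^{\odot2}}$ because the tropical sum of the columns of the normal matrix $A^{\odot2}$ is the zero vector, hence the origin lies in $\im f_{A^{\odot2}}=\TT_{A^{\odot2}}$, combined with the identification $\soma(\TT_A)=\TT_{A^{\odot2}}$, which is legitimate because for normal $A$ the canonical normalization is a conjugation by a diagonal matrix (lemma \ref{lem:canonicalA=A2} and the proof of theorem \ref{thm:canonical_nf}) --- is sound, shorter and more algebraic than the paper's.

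The converse, however, contains a genuine gap, which your last paragraph flags but does not close. Your plan is to derive inequalities on $d,\su d$ from ``the requirement that the origin lie in $\TT_{D(d,\su d)}$''. That requirement is vacuous: $D(d,\su d)$ is normal and idempotent, so by your own forward argument the origin lies in $\TT_{D(d,\su d)}$ for \emph{every} admissible choice of $d,\su d\ge0$; no inequality on the parameters, hence nothing equivalent to $I\le A\le0$, can come out of that computation. The true hypothesis is that the origin lies in $\soma(\TT_A)=f_{P^{\odot-1}}\left(\TT_{D(d,\su d)}\right)$, i.e.\ that $f_P([0,0,0]^T)\in\TT_{D(d,\su d)}$ --- a condition on the normalizing matrix $P$ relative to $A$, not on $d,\su d$ --- and converting it into normality of $A$ is the entire content of the theorem, which you defer (``one must check\ldots''). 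Moreover, the deferred claim as you phrase it cannot be checked in that form: right--multiplying $A$ by a diagonal matrix changes neither the columns as points of $\T\P^2$, nor the left factor $P$ of the canonical normalization, nor therefore $\soma(\TT_A)$, yet it destroys normality; so ``origin in the soma'' can only force normality through a statement about $A_0$, where representatives are fixed as in (\ref{eqn:A}). That is exactly how the paper's converse runs, and your sketch never reaches it: the soma is the good triangle spanned by the columns of $A^{\odot2}$; a good triangle containing the origin has its columns placed in the three corners, so $(A^{\odot2})_0$ satisfies (\ref{eqn:col_0}); this placement descends from $(A^{\odot2})_0$ to $A_0$; and the converse half of the geometric criterion (\ref{eqn:col_0}) then yields normality. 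Supplying those steps is what your converse is missing.
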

\begin{proof} Recall expression (\ref{eqn:col_0}). If  $A$ is normal then $A^{\odot2}$ is idempotent normal, so that $\soma(\TT_A)$ is  a good triangle. By expression (\ref{eqn:good1}), in $Z=0$ we have  $\soma(\TT_A)=\TT$, for some $a,b,c,d,e,f\in\R$. Then
\begin{equation}
c-f\le x\le  d-e,\ a+e\le y\le b+f,\ c-b\le y-x\le  d-a.
\end{equation}
If we write
$$\oa=a\oplus (c-f),\quad  \oc=c\oplus (a+e),\quad  \oe=e\oplus (c-b),$$
$$\ob=b\oplus'(d-e),\quad\od=d\oplus'(b+f),\quad\of=f\oplus'(d-a),$$
then
\begin{equation}\label{eqn:good2}
\soma(\TT_A)=\{(x,y)\in \R^2: \oa\le x\le  \ob,\ \oc\le y\le \od,\ \oe\le y-x\le  \of\}
\end{equation}
and $\soma(\TT_A)$ is the set of points spanned by the columns of the idempotent matrix $B$, where
\begin{equation}
B=\left[\begin{array}{ccc}
0&-\of&\oa\\\oe&0&\oc\\-\ob&-\od  &0
\end{array}\right],\quad B_0=\left[\begin{array}{ccc}
\ob&\od-\of&\oa\\\ob+\oe&\od&\oc\\0&0&0
\end{array}\right].
\end{equation}
Then $B=A^{\odot2}$, whence
$\oa,\oc,\oe\le0\le \ob,\od,\of$, by normality.  Clearly the origin belongs to $\soma(\TT_A)$ in $Z=0$.

Conversely, if $\soma(\TT_A)$ is a point (the origin), then $A^{\odot2}=0=L(0,0,0,0)$, whence $A=F(0,0,0,0,\su h,g)$, for some $\su h,g\ge0$, showing that $A$ is normal.
On the other hand, if $\soma(\TT_A)$ has dimension $\ge1$ and contains the origin, then  $\col((A^{\odot2})_0,j)\in \overline{C^0_j}$, for $j=1,2,3$. Now, gluing antennas (at most three, and in prescribed directions: north--east, west and south) to $\soma(\TT_A)$ provides $\TT_A$. The end points of these antennas are precisely the columns of $A_0$ and $\col(A_0,j)\in \overline{C^0_j}$, for $j=1,2,3$, showing normality of $A$.
\end{proof}

%

\begin{ex}\label{ex:cohen3}
Let us look at   example \ref{ex:cohen1} again, (see fig. \ref{fig:ejem_cohen2}). A normalization of $A$
is $N=P\odot A\odot Q$, where $P=\diag(-2,-3,0)$,
$$N=\left[
\begin{array}{ccc}
0&-1&-2\\
0&0&-3\\
-1&0&0\\
\end{array}\right], \qquad
Q=\left[\begin{array}{ccc}-\infty&-\infty&0\\-\infty&0&-\infty\\-1&-\infty&-\infty\end{array}\right].$$
We have
$$N_0=\left[\begin{array}{ccc}1&-1&-2\\1&0&-3\\0&0&0\end{array}\right],\qquad N^{\odot2}=\left[
\begin{array}{ccc}
0&-1&-2\\
0&0&-2\\
0&0&0\\
\end{array}\right]=(N^{\odot2})_0.$$
Another normalization of $A$ is $N=P\odot A\odot Q,$ where $P=\diag(-1,-3,0)$,
$$N=\left[
\begin{array}{ccc}
0&0&-1\\
-1&0&-3\\
-2&0&0\\
\end{array}\right],\qquad Q=\left[\begin{array}{ccc}-\infty&-\infty&0\\-\infty&0&-\infty\\-2&-\infty&-\infty\end{array}\right];$$
in this case,
$$N_0=\left[\begin{array}{ccc}2&0&-1\\1&0&-3\\0&0&0\end{array}\right],\ N^{\odot2}=\left[
\begin{array}{ccc}
0&0&-1\\
-1&0&-2\\
-1&0&0\\
\end{array}\right],\ (N^{\odot2})_0=\left[
\begin{array}{ccc}
1&0&-1\\
0&0&-2\\
0&0&0\\
\end{array}\right].$$
\end{ex}

\begin{figure}[H]
 \centering
  \includegraphics[keepaspectratio,width=12cm]{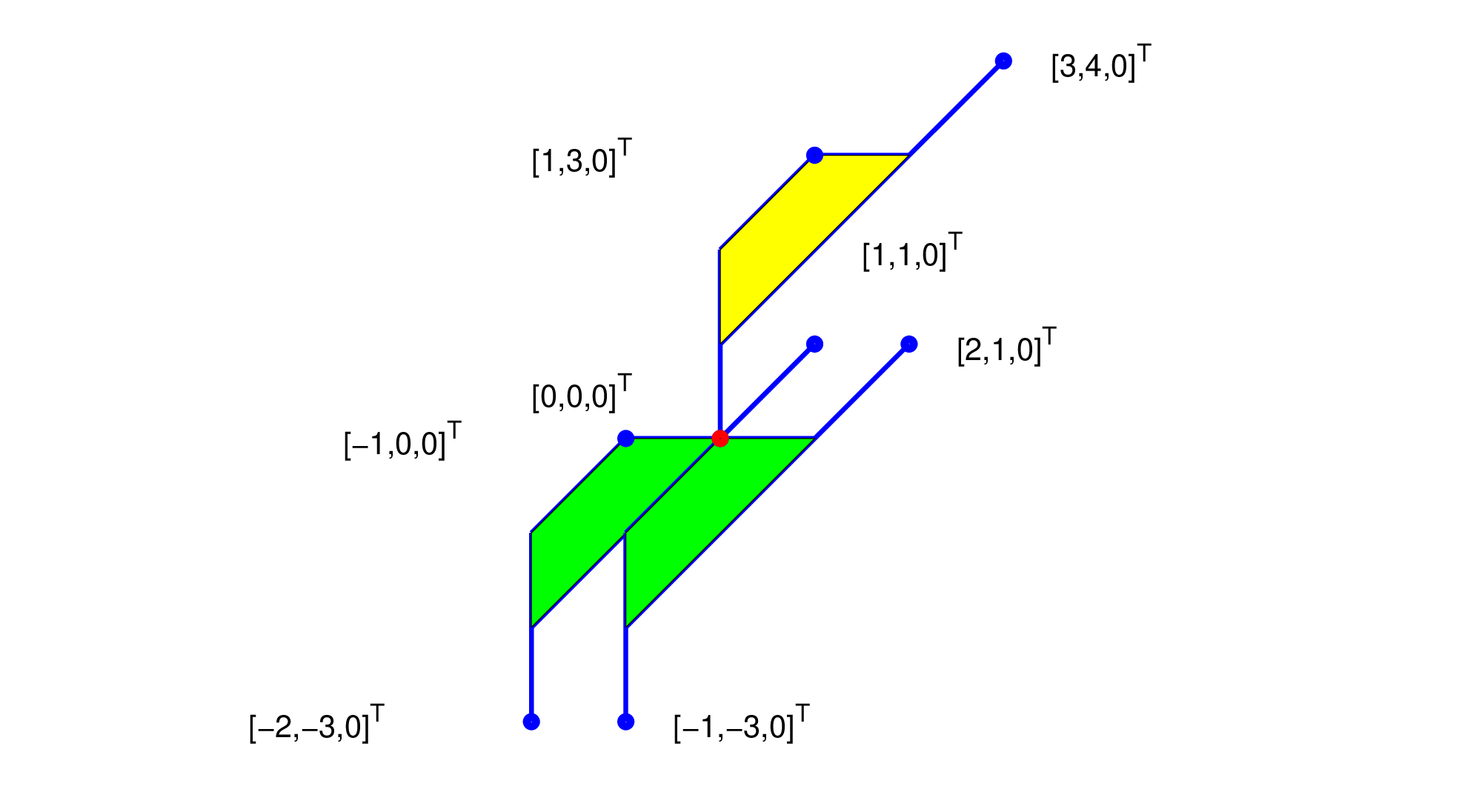}\\
  \caption{Tropical triangles $\TT_A$ and  $\TT_N$, for  two different normalizations $N$.}
  \label{fig:ejem_cohen2}
  \end{figure}

\m Next, we procede to define soma and antennas of a co--spanned tropical triangle.
Regarding co--span, we choose to work with \emph{co--normal} matrices, i.e., matrices $A$ having non--negative entries and zero diagonal.
We can achieve a lower canonical co--normalization and then define \emph{soma and antennas of a co--spanned tropical triangle}, in a similar fashion  to theorem \ref{thm:canonical_nf} and definition in p. \pageref{eqn:soma_antennas}.
Since antennas of spanned triangles grow only in three directions (north--east, west and south), then  antennas of co--spanned triangles grow only in three directions (south--west, east and north).

\m
For a given matrix $A$, consider the triangle $\TT^{A}=\cospan(-A^T)$. Then
 \begin{equation}\label{eqn:union} \TT^A=\soma(\TT^A )\cup\ant(\TT^A ) \end{equation} is a disjoint union.
Suppose that the two--dimensional cell $B^A$, as defined on p. \pageref{dfn:cell_decom}, is non--empty. Then
\begin{equation}
\overline{B^A}=\soma(\TT^A).\label{eqn:B^A}
\end{equation}
On the other hand, $B^A$ is empty if and only if $\soma(\TT^A)$ reduces to a segment or point.

\m Now, what is the relationship  between $\TT_A$ and $\TT^A$,  $\soma (\TT_A)$ and $\soma(\TT^A)$,  $\ant (\TT_A)$ and $\ant (\TT^A)$, for a given  normal  matrix $A=F(d,\su d,\su h,g)$?
\begin{itemize}

\item If $h_1=h_2=h_3=0$, then  $A$ is  idempotent normal.
 By lemma \ref{lem:A=A2},  $\TT_{A}$ is  good, so that \emph{the columns of $-A^T$ are precisely the vertices of the sides of $\TT_{A}$}, see figure \ref{fig:normal_idempot_11}. By the max--min duality, \emph{the columns of $A$ are  the stable intersection points of the tropical lines $\su {A}$}. Therefore,
\begin{equation}
\soma(\TT_A)=\TT_{A}=\span(A)=\cospan(-A^T)=\TT^{A}=\soma(\TT^A).\label{eqn:span-cospan}
\end{equation}

\item If $h_{j+1}>0$ and $d_j=0$ for some $j$, by the previous item,
\begin{equation}
\soma(\TT_A)=\TT_{A^{\odot2}}=\TT^{A^{\odot2}}=\soma(\TT^A),\label{eqn:soma}
\end{equation}
even though $\TT_A\neq \TT^A$.

Write $S=\soma(\TT_A)$. Each antenna $a$ of $\TT_A$ grows at a vertex $v$ of $S$. An antenna $a'$ of $\TT^A$ grows at $v$, as well and, in fact, the integer length of $a$  and $a'$ is the same.
We claim that \emph{there exists a unique two--dimensional peripheral cell, denoted $P_a$, in the cell decomposition $\CC^A$ such that $a'\cup a\subset \overline {P_a}$.}
Indeed, by hypothesis, $A=F(d,\su d,\su h,g)$ and $A^{\odot2}=L(d,\su d)$, for some $d,\su d, \su h,g\ge0$ with $h_{j+1}>0$ implying $d_j=0$ and $g>0$ implying $d=d_1=h_3=0$.
Say $h_2>0$, $d_1=0$ so that $\TT_A$ has an antenna $a$ pointing north--east (see figure \ref{fig:overlapping_20}). Looking at (\ref{eqn:LLLcero}), (\ref{eqn:FHcero}) and (\ref{eqn:FHTmenoscero}) we know that the end points of $a$ are $v=\col((A^{\odot2})_0,2)=[d,2d+d_2,0]^T$ and  $\col(A_0,2)=[d+h_2, 2d+d_2+h_2,0]^T$ while the end points of $a'$ are $v$ and  $\col((-A^T)_0,3)=[d, 2d+d_2+h_2,0]^T$. Therefore, we have the following expression of $P_a$ in $Z=0$:
\begin{equation}
P_a=\{(x,y)^T\in\T^2: d<x,\quad 0<y-x-d-d_2<h_2\}.\label{eqn:cell_P_a}
\end{equation}

\item If  $A=F(0,0,d_2,d_3,0, h_2,0,g)$, with $d_2,d_3\ge0$, $g>0$, then  $\TT_A$ has an antenna $a$ of integer length  $g$
emanating from $\col(A^{\odot2},3)$. Computing $(-A^T)_0$ we see that $\TT^{A}$ has an antenna $a'$ of the same integer length, emanating from the same point. The corresponding peripheral cell $P_a$ is expressed as follows in $Z=0$:
\begin{equation}
P_a=\{(x,y)\in\T^2: -d_3-g<x<-d_3,\  y<x\}.\label{eqn:cell_P_a_g}
\end{equation}

\end{itemize}
\begin{figure}[H]
 \centering
  \includegraphics[width=10cm,keepaspectratio]{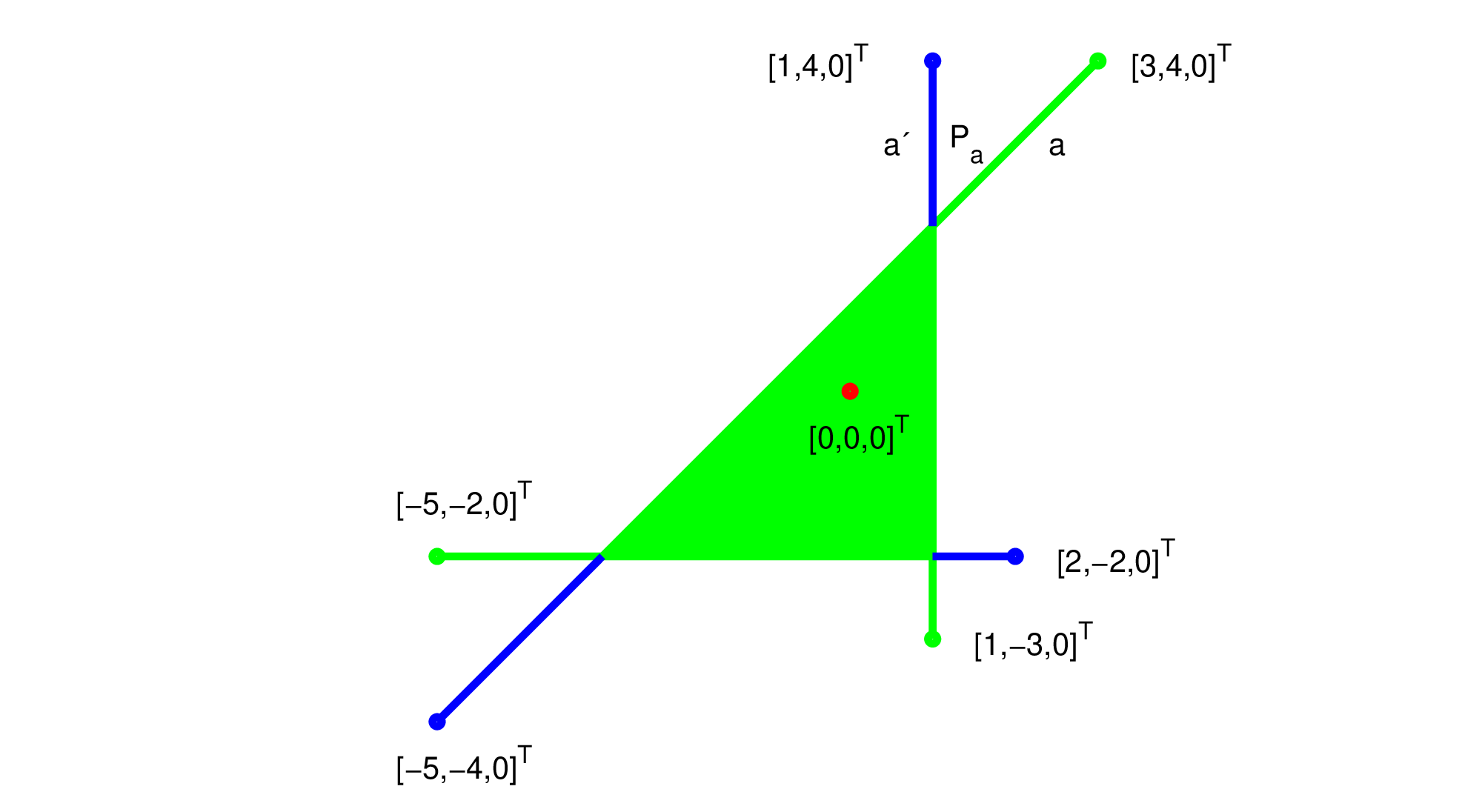}\\
  \caption{Tropical triangles $\TT_A$, $\TT^A$, antennas $a$ and $a'$ and the two--dimensional cell $P_a$, for $A$ in (\ref{eqn:cell_decom_bad_9}).}
  \label{fig:overlapping_20}
  \end{figure}

\section{The map $f_A$ for a $A=F(d,\su d,\su h,g)$}
Recall that $\overline{\Gamma}$ denotes the closure of $\Gamma\subseteq \T\P^2$. The following theorem fully describes the action of the map $f_A$  on each point of $\T\P^2$, since $\T\P^2$ is a (finite) union of $\overline{\Gamma}$, where $\Gamma$ is a two--dimensional cell in $\CC^A$. The cell $\Gamma$ is of the following types: either $B^A$ (central cell), or $P_a$ (peripheral cell parallel to some coordinate axis, associated to some antenna $a$), or $P$ (peripheral cell parallel to some coordinate axis, not associated to any antenna) or $C$ (corner). The cell $B^A$ can be empty. If $A=F(d,\su d,\su h,g)$, then $B^A=\emptyset$ if and only if $d=d_j=d_{j+1}=0$, for some $j$ modulo 3.

\begin{thm}\label{thm:f_A}
 Let $A=F(d,\su d,\su h,g)$ be given. Then
 \begin{enumerate}
 \item $f_A|_{\overline{C}}$ collapses to some vertex of $\TT_A$, for every corner $C$ in $\CC^A$,\label{cond:cuatro}
\item $f_A|_{\overline{P_a}}=a$, for every antenna $a$ of $\TT_A$,  \label{cond:dos}
\item  $f_A|_{\overline{P}}$ is the classical projection onto  $\TT_A$, in  the direction of $P$, for every two--dimensional  peripheral cell $P$ of $\CC^A$, if $P\neq P_a$, for every $a$ antenna of $\TT_A$,\label{cond:tres}
\item $\soma(\TT_A)$ is the set of fixed points of $f_A$ and, if $B^A\neq\emptyset$, then $\soma(\TT_A)=\overline{B^A}$.\label{cond:uno}

\end{enumerate}
 \end{thm}

\begin{proof} $f_A$ is continuous, so that $f_A|_{\overline\Gamma}$ is easily computed from $f_A|_{\Gamma}$.
Part (\ref{cond:cuatro}) was proved in p. \pageref{dfn:corners}.
\begin{itemize}
\item Suppose that $h_1=h_2=h_3=0$. Then $A=L(d,\su d)$ is
idempotent normal. Since $\TT_A$ is a tropical triangle without antennas, then part (\ref{cond:dos}) does not apply.  In order to prove part (\ref{cond:tres}), let us work in $Z=0$. We have
 $$A_0=\LLLcero,$$
$$(-A^T)_0=\LLLTmenoscero.$$
Let  $P$ be a peripheral cell in $\CC^A$; say, $P$ is   parallel to the $Y$ direction. Then either
\begin{equation}
P=\{(x,y)^T\in \T^2:-2d-d_3<x<d+d_1-d_3,\quad y<-d-d_3\}
\end{equation}
or
\begin{equation}
P=\{(x,y)^T\in \T^2:d+d_1-d_3<x<d+d_1,\quad y<x-2d-d_1\}.
\end{equation}
 In the former case, $f_A(p)=[x,{-d-d_3},0]^T$;  in the latter, $f_A(p)=[x,{x-2d-d_1},0]^T$.
 We can be  more concise: if $p=[x,y,0]^T\in P$, then either $y=-\infty$ or $y\in\R$ is  big and negative. Therefore,  $f_A(p)=A\odot p$ is a tropical linear combination \emph{only of} $c_1=\col(A,1)$ and $c_3=\col(A,3)$; in particular, $f_A(p)$   belongs to the tropical segment $\span(c_1,c_3)\subseteq\TT_A$ (and $f_A(p)$ does not depend on $y$).
 This proves part (\ref{cond:tres}).

By lemma \ref{lem:normal}, each point in  $\span(A^{\odot2})=\soma(\TT_A)$  is fixed by $f_A$. From equality (\ref{eqn:B^A}) and parts
 (\ref{cond:tres}), (\ref{cond:dos}) and (\ref{cond:cuatro}), now part (\ref{cond:uno}) follows.

\item Suppose that $h_{j+1}>0$ and  $d_j=0$, for some $j$; say $j=2$ and $P_a$ is given in (\ref{eqn:cell_P_a}). For any $p=[x,y,0]^T\in\overline{P_a}$ we have $f_A(p)=[y-d-d_2,y,x-d]^T=[y-x-d_2,y-x+d,0]^T\in a$. In particular,  $f_A(a')=a$, where $a'$ is the antenna  of $\TT^A$ corresponding to $a$. This  proves part (\ref{cond:dos}). Parts (\ref{cond:tres}) and (\ref{cond:uno}) are proved as in the previous case.

\item Suppose that $g>0$ and $d=d_1=h_3=0$. Then $P_a$ is given in (\ref{eqn:cell_P_a_g}). The proof is similar to the previous case.
\end{itemize}
\end{proof}

\begin{rem}
If $A=L(d,\su d)$  for $d,\su d\ge0$, then $f_A=f_A\circ
f_A$, by theorem \ref{thm:f_A}, meaning that $f_A$ is some
kind of projection. But, in general, $f_A$ is different
from  the projector map onto $\TT_A$ in  p.
\pageref{dfn:projector}.  For instance,  consider the
matrix $A=L(3,9,2,4)$, i.e.,
$$A=\left[\begin{array}{ccc} 0&-5&-10\\ -15&0&-7\\ -12&-8&0 \end{array}\right],\quad A_0= \left[\begin{array}{ccc} 12&3&-10\\-3&8&-7\\0&0&0 \end{array}\right],\quad
p=\co{-12}00.$$
Then $$\rho(p)=\co{-12}{-27}{-24}\oplus\co{-12}{-7}{-15}\oplus\co{-12}{-9}{-2}=\co{-12}{-7}{-2}=\co{-10}{-5}{0}$$
$$f_A(p)=\co{-5}{0}{0}\neq \rho(p).$$
\end{rem}

 The results above extend to two types of matrices over $\T$.
\begin{itemize}
\item If $A$ is a permutation matrix, then  $I=L(0,\infty,\infty,\infty)$ can be conceived as the lower canonical normalization of $A$;  $I$ can be viewed as the limit of $L(0,\su d)$, as all the $d_j$'s tend to infinity. In this case, $\TT_A=\TT_I=\T\P^2$ and $f_A=f_I$ is the identity.

\item If $A=D\odot P$  for some permutation matrix $P$ and
\begin{equation}\label{eqn:D}
D=\left[\begin{array}{ccc}
-\infty&0&0\\0&-\infty&0\\0&0&-\infty\\
\end{array}\right],
\end{equation}
 then  $N=F(0,0,0,0,\infty,\infty,\infty,0)$ can be considered the lower canonical normalization of $A$ and $N$ is, in some sense, the limit of $F(0,0,0,0,\su h,0)$, as all the $h_j$'s tend to infinity; see (\ref{eqn:alpha}). In this case, $\TT_A$ is just the tropical line $L_0$ and $\soma(\TT_A)$ is the origin.
\end{itemize}

Neither $-I^T$  nor $-D^T$ can be written with entries in $\T$. However, according to the equivalence relation in p. \pageref{asser:equiv},  $[0,a,b]$ tends to  $[-\infty,0,0]$ (resp. $[-\infty,0,-\infty]$) when $a,b\to \infty$ (resp. $a\to \infty$ and  $b$  remains fixed), so that
$D$ can be interpreted as $-I^T$.

If $A$ is a permutation matrix or $A=D\odot P$, then the cell decomposition $\CC^A$ is obvious.

\begin{dfn}\label{dfn:admisible}
A   $3\times3$ matrix $A$ is \emph{admissible} if either $A$ is real or $A=B\odot P$, where $B=D$  in (\ref{eqn:D}) or $B=I$ and $P$ is a permutation matrix.
\end{dfn}

Finally, we can   describe the map $f_A$, for any $3\times 3$  admissible matrix $A$. First, we find the lower canonical normalization $N=F(d,\su d,\su h,g)=P\odot A\odot Q$ to obtain $f_N=f_P\circ f_A\circ f_Q$; then we apply theorem \ref{thm:f_A}, knowing that $f_P$ and $f_Q$ are just changes of coordinates.

Consider
the set $S_A$ of points where $f_A$ is injective, i.e.,
$$S_A=\{q\in\T\P^2: \exists  \ !\ p\in\T\P^2, f_A(p)=q\}.$$  On $\R^n$,  the set $S_A$ plays an important role in \cite{Butkovic_S}.

Suppose that $A$ is admissible. If $A=P$ is a permutation matrix, then $S_A=\T\P^2$. If $A=D\odot P$, then $S_A$ reduces to the origin. If $A$ is real and $A=F(d,\su d,\su h,g)$, then  $S_A= B^A$, by theorem \ref{thm:f_A} and equality (\ref{eqn:B^A}). And if $N=P\odot A\odot Q$ is the lower canonical normalization of $A$, then $S_A=S_{A\odot Q}=f_{P^{\odot-1}}(B^N)=B^A$. In other words,  \emph{$f_A$ is injective precisely on $B^A$, when $A$ is real}.

%
%
%

\begin{cor}
If $A$ is an admissible matrix, then $f_A$ transforms tropical collinear points into tropical collinear points.
\end{cor}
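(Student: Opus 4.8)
The plan is to reduce collinearity to tropical singularity of a $3\times 3$ matrix and then to show that left tropical multiplication preserves singularity. Given three points $[v_1],[v_2],[v_3]\in\T\P^2$, assemble (representatives of) them as the columns of a matrix $M=(m_{ij})$. Recall that the three points are tropically collinear --- i.e., they lie on a common tropical line $L_\ell$ --- precisely when $M$ is tropically singular, that is, when the maximum defining $|M|_{trop}$ is attained at least twice. First I would record the equivalent \emph{dependency} form of singularity that I actually use: $M$ is tropically singular if and only if there is a vector $\lambda=(\lambda_1,\lambda_2,\lambda_3)\neq(-\infty,-\infty,-\infty)$ such that, for every row index $i$, the maximum $\max_j(\lambda_j+m_{ij})$ is attained for at least two distinct values of $j$. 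Since the tropical permanent is transpose invariant, this column--dependency is equivalent to the existence of a line through the three columns, which is exactly the geometric statement of collinearity discussed via the tropical version of Cramer's rule.

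Second, because $f_A([v_j])$ is represented by $A\odot v_j=\col(A\odot M,j)$, it suffices to prove that $A\odot M$ is tropically singular whenever $M$ is. The key computation is to feed the very same witness $\lambda$ into $A\odot M$. For a fixed row $i$ one expands
$$\max_j\bigl(\lambda_j+(A\odot M)_{ij}\bigr)=\max_{j,k}\bigl(\lambda_j+a_{ik}+m_{kj}\bigr)=\max_k\Bigl[a_{ik}+\max_j(\lambda_j+m_{kj})\Bigr].$$
Choosing an index $k^\ast$ that attains the outer maximum, the inner maximum $\max_j(\lambda_j+m_{k^\ast j})$ is, by the dependency hypothesis applied to row $k^\ast$ of $M$, attained at two distinct columns $j_1\neq j_2$. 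A short check then shows that columns $j_1$ and $j_2$ both realize the value $\max_j(\lambda_j+(A\odot M)_{ij})$, so this maximum is again attained at least twice. As the row $i$ was arbitrary, the same $\lambda$ witnesses the tropical singularity of $A\odot M$, and hence the three image points $f_A([v_1]),f_A([v_2]),f_A([v_3])$ are tropically collinear.

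I expect the main obstacle to be expository rather than computational: one must state the collinearity/singularity dictionary carefully enough that ``attained at least twice'' always refers to two genuinely distinct column indices, and one must confirm that the argument survives the boundary cases allowed at the end of the paper (representatives with some $-\infty$ entries, or a witness $\lambda$ with some $-\infty$ coordinates). Since every step is a plain max--plus manipulation that treats $-\infty$ uniformly, these degenerate cases cause no real trouble; the only care needed is to keep $\lambda$ fixed throughout and to note that tropical rescaling of representatives affects neither singularity nor collinearity. No machinery beyond the definition of a tropical line and the duality already recalled is required.
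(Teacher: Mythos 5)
Your proof is correct, but it takes a route the paper does not: the paper records no argument at all for this corollary (the statement itself carries the end--of--proof mark, presented as an immediate by--product of the description of $f_A$ via canonical normalization, theorem \ref{thm:canonical_nf}, and theorem \ref{thm:f_A}). Your argument is instead purely algebraic and self--contained modulo one standard fact: collinearity of the three points is translated into tropical singularity of the matrix $M$ built from representatives (the criterion from \cite{Richter}, which the paper cites for Cramer's rule but never states in this form); singularity is then rephrased, via transpose invariance of the tropical permanent, as the existence of a dependency witness $\lambda$; and the elementary max--plus interchange
$$\max_j\bigl(\lambda_j+(A\odot M)_{ij}\bigr)=\max_k\Bigl[a_{ik}+\max_j\bigl(\lambda_j+m_{kj}\bigr)\Bigr]$$
shows that the \emph{same} $\lambda$ witnesses dependency of $A\odot M$: your check is right that both $j_1$ and $j_2$ (the two columns attaining the inner maximum at the optimal row $k^\ast$) attain the total maximum, so it is attained at two genuinely distinct columns. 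Running the dictionary backwards gives collinearity of the images. What your approach buys: it works verbatim in $\T\P^{n-1}$ for any $n$, it handles matrices with $-\infty$ entries (under the paper's standing row/column condition, which guarantees $f_A$ is well defined on $\T\P^2$), and it needs none of the normalization or cell--decomposition machinery. What it does \emph{not} buy is the geometric picture; conversely, it is worth noting that the paper's geometric description (fixed soma, collapsed antennas, parallel projections on the remaining cells) does not by itself make the corollary obvious, since three collinear points may lie in different cells and be projected in different directions, so your algebraic argument is arguably the more honest proof. The one point you should make fully explicit is that the collinear--iff--singular dictionary is a theorem of \cite{Richter}, not a definition; once that is cited, every remaining step in your write--up is an elementary verification and the proof is complete.
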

\begin{proof}
All we need to show is that the image of a tropical line $L\subset \T\P^2$ is contained in some tropical line $L'$. Now, the cell decomposition $\CC^A$ on $\T\P^2$ induces a  cell decomposition on $L$. The triangle $\TT_A$ is also decomposed into cells. Now, a case by case analysis (depending on the position of the vertex of $L$) and  theorem \ref{thm:f_A}, shows that each cell $\Gamma$ of $L$ is mapped by $f_A$ inside a cell $\Gamma'$ of $\TT_A$. And the union of all such $\Gamma'$'s is contained in a tropical line $L'$.
\end{proof}

\section*{Acknowledgements}
I am very grateful to two anonymous  referees for their very valuable suggestions.
Besides, I would like to thank  my former students Fernando Barbero and  Elisa Lorenzo for their interest and support.

\end{document}